\newtheorem{proposition}{Proposition}[section]
\newtheorem{theorem}[proposition]{Theorem}
\newtheorem{corollary}[proposition]{Corollary}
\newtheorem{remark}[proposition]{Remark}
\newtheorem*{theorem*}{Theorem}
\newtheorem*{proposition*}{Proposition}
\newtheorem*{lemma*}{Lemma}
\newtheorem*{corollary*}{Corollary}
\newtheorem*{rep@theorem}{\rep@title}
\newcommand{\newreptheorem}[2]{
\newenvironment{rep#1}[1]{
 \def\rep@title{#2 \ref{##1}}
 \begin{rep@theorem}}
 {\end{rep@theorem}}}
\theoremstyle{definition}
\newtheorem{definition}[proposition]{Definition}
\newcommand{\bdry}{\partial}
\newcommand{\N}{\mathbb{N}}
\newcommand{\Z}{\mathbb{Z}}
\newcommand{\F}{\mathcal{F}}
\newcommand{\C}{\mathcal{C}}
\newcommand{\im}{\operatorname{Im}}
\newcommand{\lnk}{\operatorname{lk}}
\newcommand{\Id}{\operatorname{Id}}
\newcommand{\Arf}{\operatorname{Arf}}
\newcommand{\TL}{\operatorname{T.L.}}
\newcommand{\SL}{\operatorname{S.L.}}
\newcommand{\Sum}{\displaystyle \sum}
\begin{document}
\title[algebraic concordance and 1-solvability]{Every genus one algebraically slice knot is 1-solvable.}

\author{Christopher W.\ Davis}
\address{Department of Mathematics, University of Wisconsin--Eau Claire}
\email{daviscw@uwec.edu}
\urladdr{people.uwec.edu/daviscw}

\author{Taylor Martin}
\address{Department of Mathematics, Sam Houston State University}
\email{taylor.martin@shsu.edu}

\author{Carolyn Otto}
\address{Department of Mathematics, University of Wisconsin--Eau Claire}
\email{ottoa@uwec.edu}

\author{JungHwan Park$^\dag$}
\address{School of Mathematics, Georgia Institute of Technology}
\email{junghwan.park@math.gatech.edu}

\date{\today}
\thanks{$^{\dag}$ Partially supported by the National Science Foundation grant DMS-1309081.}

\subjclass[2000]{57M25}

\begin{abstract} Cochran, Orr, and Teichner developed a filtration of the knot concordance group indexed by half integers called the solvable filtration.  Its terms are denoted by $\F_n$.  It has been shown that $\F_n/\F_{n.5}$ is a very large group for $n\ge 0$.  For a generalization to the setting of links the third author showed that $\F_{n.5}/\F_{n+1}$ is non-trivial.  In this paper we provide evidence that for knots $\F_{0.5}=\F_1$.  In  particular we prove that every genus 1 algebraically slice knot is 1-solvable.
\end{abstract}

\maketitle

\section{Introduction}

The knot concordance group $\C$, has been intently studied since its inception by Fox and Milnor in 1966 \cite{FM}.
 In \cite{COT03} Cochran-Orr-Teichner produced a filtration of $\C$ by subgroups
$$
\dots \le \F_{2}\le \F_{1.5}\le \F_{1}\le \F_{0.5}\le \F_0\le\C.
$$
Here, a knot in $\F_n$ is called \textbf{$n$-solvable}.  We recall the precise definition in subsection~\ref{sect:filtration}.

The first two terms of the $n$-solvable filtration are understood: $\F_0$ consists of knots whose Arf-invariants vanish and $\F_{0.5}$ consists of algebraically slice knots.  For all $n\in \N$ it is shown in \cite{CHL10} that ${\F_n}/{\F_{n.5}}$ contains an infinite rank free abelian subgroup.  For an analogous filtration $\F_n^m$ of the $m$-component (string) link concordance groups $\C^m$, the third author showed in \cite{Otto14} that the other half of the filtration ${\F_{n.5}}/{\F_{n+1}}$ is non-zero.  The analogous result is not known for knots;  indeed, our first theorem can be interpreted as suggesting that $\F_{0.5}=\F_1$.  

\begin{reptheorem}{main thm genus 1}
If a knot $K$ has genus $1$ and is algebraically slice, then $K$ is $1$-solvable.
\end{reptheorem}

In order to put this result into perspective as something surprising, we take a moment and recall the most common construction of knots which are highly solvable.  

If $K$ is an algebraically slice knot bounding a genus $g$ Seifert surface $F$, then on $F$ there exists a collection of $g$ disjoint nonseparating simple closed curves, $L_1,\dots,L_g$ for which the linking number $\lnk(L_i,L_j^+)$ vanishes for all $i,j\in \{1,\dots,g\}$.  Here, $L_j^+$ is the result of pushing $L_j$ off of $F$ in the positive normal direction.  Indeed, the existence of these curves is the definition of $K$ being algebraically slice.  Such a collection of simple closed curves is called a \textbf{derivative} for $K$.  
In \cite[Theorem 8.9]{COT03}, it is shown that an algebraically slice knot will always be more highly solvable than its derivatives, we recall the precise result here.

\begin{proposition*}[\cite{COT03}, Theorem 8.9]
If $L$ is a derivative for $K$ and $L$ is $n$-solvable then $K$ is $(n+1)$-solvable.  If $L$ is $n.5$-solvable, then $K$ is $(n+1.5)$-solvable.
\end{proposition*}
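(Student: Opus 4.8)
The plan is to build an $(n+1)$-solution for $K$ by combining a given $n$-solution for the derivative $L$ with a cobordism manufactured from the Seifert surface $F$. Throughout write $M_K$ and $M_L$ for the zero-framed surgery manifolds of $K$ and of the link $L = L_1 \cup \dots \cup L_g$, and recall that ``$L$ is $n$-solvable'' means that $M_L$ bounds a spin $4$-manifold $W_L$ satisfying the $n$-solvability conditions: $H_1(M_L) \to H_1(W_L)$ is an isomorphism and $H_2(W_L)$ has a basis of framed, embedded surfaces realizing a hyperbolic intersection form, all of whose fundamental groups map into $\pi_1(W_L)^{(n)}$.

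First I would construct a cobordism $C$ from $M_K$ to $M_L$. Starting from $M_K \times [0,1]$, attach a two-handle along each curve $L_i \subset \hat F \subset M_K \times \{1\}$ using the framing induced by the surface $F$; this framing is the $0$-framing precisely because $\lnk(L_i, L_i^+) = 0$. Because $L = \{L_i\}$ is a metabolizer, the surface $F$ compresses along the $L_i$ to a disk, so $K$ bounds a disk after these surgeries and the resulting boundary component is exactly $M_L$. The same self-linking hypotheses $\lnk(L_i, L_j^+)=0$ guarantee that the cocores of the handles, together with the subsurfaces of $F$ dual to the $L_i$, assemble into embedded surfaces in $C$ carrying a hyperbolic summand of $H_2(C)$, while $H_1(C) \cong \Z$ is generated by the meridian of $K$.

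Next I would set $W = C \cup_{M_L} W_L$, so that $\bdry W = M_K$, and verify the $(n+1)$-solution conditions. A Mayer--Vietoris computation gives $H_1(W) \cong \Z$ and exhibits $H_2(W)$ as the orthogonal sum of $H_2(W_L)$ and the hyperbolic piece from $C$, each summand represented by embedded framed surfaces. The crucial point is the behaviour of fundamental groups: the meridians of the $L_i$, which generate $H_1(M_L) \cong \Z^g$, map into the commutator subgroup $\pi_1(W)^{(1)}$ inside $W$, since in $W$ each $L_i$ bounds (coming from the compressed surface) and the linking numbers vanish. Consequently the inclusion carries $\pi_1(W_L)^{(n)}$ into $\big(\pi_1(W)^{(1)}\big)^{(n)} = \pi_1(W)^{(n+1)}$, so the surfaces inherited from the $n$-solution now lie in $\pi_1(W)^{(n+1)}$; the surfaces coming from $C$ lie there as well. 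Thus $W$ is an $(n+1)$-solution. For the half-integer statement the identical shift upgrades an $n.5$-solution --- whose Lagrangian surfaces lie in $\pi_1(W_L)^{(n+1)}$ and dual surfaces in $\pi_1(W_L)^{(n)}$ --- to one whose Lagrangian and dual surfaces lie in $\pi_1(W)^{(n+2)}$ and $\pi_1(W)^{(n+1)}$, i.e. an $(n+1.5)$-solution.

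I expect the main obstacle to be the derived-series bookkeeping of the last paragraph: rigorously verifying that every generating surface of $H_2(W)$ has $\pi_1$-image in the prescribed term of the derived series of $\pi_1(W)$. This is where the metabolizer conditions $\lnk(L_i, L_j^+)=0$ do the real work, since they are exactly what forces the meridians of $L$ into $\pi_1(W)^{(1)}$ and hence produces the one-step drop in the derived series. Controlling these inclusion-induced maps on the quotients $\pi_1/\pi_1^{(k)}$, rather than the more routine homological computations of $H_1$ and $H_2$, is the heart of the argument.
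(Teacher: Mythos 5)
The paper never actually proves this proposition---it is quoted from \cite{COT03} (Theorem 8.9) with no argument given---so your proposal has to stand on its own. Its architecture (a handle cobordism from $M(K)$ to $M(L)$ glued to an $n$-solution $W_L$, plus a one-step shift in the derived series) is indeed the standard one, and your derived-series bookkeeping is essentially right: $H_1(W)\cong\Z$ is generated by the meridian of $K$, the meridians of the $L_i$ link $K$ zero times and so die in $H_1(W)$, and since those meridians generate $H_1(W_L)\cong H_1(M(L))\cong\Z^g$ (this is where the solvability condition (1) enters), every loop of $W_L$ is null-homologous in $W$; hence $\im(\pi_1(W_L)\to\pi_1(W))\subseteq\pi_1(W)^{(1)}$ and $\pi_1(W_L)^{(n)}$ lands in $\bigl(\pi_1(W)^{(1)}\bigr)^{(n)}=\pi_1(W)^{(n+1)}$. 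But the construction itself has a genuine error: the upper boundary of $M(K)\times[0,1]\cup(\text{$0$-framed $2$-handles along }L_1,\dots,L_g)$ is \emph{not} $M(L)$. It is $0$-surgery on $K\cup L$, and since the derivative condition makes $K$ bound an embedded disk in $M(L)$ (compress $F$ along the meridian disks of the surgery tori---this is exactly what $\lnk(L_i,L_j^+)=0$ buys), this manifold is $M(L)\,\#\,(S^1\times S^2)$: surgery along a disk-bounding curve with the disk framing creates an $S^1\times S^2$ summand. (Test case: $K$ the unknot with a stabilized disk as Seifert surface and $L_1$ a split unknot; the upper boundary is $(S^1\times S^2)\,\#\,(S^1\times S^2)$, not $S^1\times S^2=M(L)$.) As written, the gluing $W=C\cup_{M(L)}W_L$ cannot even be performed. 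The fix is standard but not optional: attach a $3$-handle along the nonseparating $2$-sphere obtained by compressing the capped Seifert surface $\widehat F$; this caps off the $S^1\times S^2$ summand and also kills $[\widehat F]\in H_2$.

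The second problem is your description of $H_2(W)$. The cocores of the $2$-handles are not closed surfaces (their boundaries are the meridians of the $L_i$, which remain essential in the upper boundary), and the $2$-handle classes pair among themselves by $\lnk(L_i,L_j^+)=0$, so $C$ contributes a Lagrangian, not a hyperbolic summand; there is no ``hyperbolic piece from $C$.'' In fact, once the $3$-handle is in place, a Mayer--Vietoris computation shows $H_2(W)\cong H_2(W_L)$: the $2$-handle classes coincide with the images of the generators of $H_2(M(L))$, which are absorbed into $H_2(W_L)$. This is not a cosmetic point. If $H_2(W)$ really did contain extra classes coming from $C$, you would be obligated to represent them by embedded surfaces with $\pi_1$-image in $\pi_1(W)^{(n+1)}$, which your single sentence ``the surfaces coming from $C$ lie there as well'' does not justify and which could not be done by capping cocores (any cap must run through $W_L$, where you only control images into $\pi_1(W)^{(1)}$, not $\pi_1(W)^{(n+1)}$). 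The theorem survives precisely because no such extra classes exist: the surfaces of the $n$-solution $W_L$ alone form the required basis, and their $\pi_1$-images shift down one derived-series step by the argument in your last paragraph. With the $3$-handle added and the $H_2$ accounting corrected, your outline becomes the standard proof, including the $n.5$ case.
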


Until only recently \cite{CD15},  it was not known if the converse were true.  That is, if a knot is $(n+1)$-solvable, does it follow that it has an $n$-solvable derivative?  The following corollary of our Theorem~\ref{main thm genus 1} reveals that counterexamples are plentiful, at least in the case that $n=0$.  

\begin{corollary}
Let $K$ be the knot of Figure~\ref{fig:946example}, and let $J$ be a knot with $\Arf(J) = 1$.  Then $K$ is $1$-solvable and on the Seifert surface depicted in Figure~\ref{fig:946example}, $K$ does not admit a $0$-solvable derivative.
\end{corollary}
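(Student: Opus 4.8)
The plan is to treat the two assertions separately, deriving the first directly from Theorem~\ref{main thm genus 1} and reducing the second to an Arf-invariant computation. For the first assertion, I would observe that $K$ bounds the displayed genus $1$ Seifert surface $F$, and that tying the knot $J$ into a band of $F$ alters neither the genus nor the Seifert form: the linking numbers recorded in the Seifert matrix are computed in a region disjoint from the ball containing the local knot, so they are unaffected by band-knotting. Consequently the Seifert form of $K$ coincides with that of $9_{46}$, which is algebraically slice. Thus $K$ is a genus $1$ algebraically slice knot, and Theorem~\ref{main thm genus 1} applies to show that $K$ is $1$-solvable.

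For the second assertion I would first enumerate, up to isotopy, every derivative on $F$. Since $F$ has genus $1$, a derivative is a single nonseparating simple closed curve $L$, which represents a primitive class $v = p\,a + q\,b$ in $H_1(F)\cong\Z^2$ for some basis $\{a,b\}$; the defining condition is $\lnk(L,L^+) = v^{T} V v = 0$, where $V$ is the Seifert matrix. Evaluating this quadratic form for the $9_{46}$ Seifert matrix yields an expression proportional to $pq$, which vanishes on a primitive class precisely when $v = \pm a$ or $v = \pm b$. The key structural point is that $F$ is a once-punctured torus, on which each primitive homology class contains a unique isotopy class of essential simple closed curve; since $F$ is embedded in $S^3$, this pins down each derivative as a well-defined knot. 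Hence there are, up to isotopy in $S^3$, exactly two derivatives on $F$, namely the curves $L_a$ and $L_b$ carried by the two classes $a$ and $b$.

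It then remains to compute $\Arf(L_a)$ and $\Arf(L_b)$ by reading the two curves off the diagram of Figure~\ref{fig:946example}. One of them runs over the band into which $J$ has been tied and is therefore isotopic to $J$, so that $\Arf(L_b) = \Arf(J) = 1$ by hypothesis; the other runs over the remaining band and is visibly isotopic to a fixed knot of Arf invariant one recorded in the figure, so $\Arf(L_a) = 1$ as well. Since $\F_0$ consists exactly of the knots of vanishing Arf invariant, neither $L_a$ nor $L_b$ lies in $\F_0$. As these are all of the derivatives on $F$, we conclude that $K$ admits no $0$-solvable derivative on this surface, completing the proof.

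I expect the main obstacle to be the enumeration step rather than the final Arf computation: one must argue carefully that the list $\{L_a, L_b\}$ is exhaustive, combining the algebraic input (the only primitive metabolizing classes of the $9_{46}$ form are $\pm a$ and $\pm b$) with the topological input (uniqueness of the isotopy class of an essential simple closed curve in each primitive class on the once-punctured torus). Once every derivative is seen to be one of these two specific knots, identifying their knot types from the figure and verifying that each has Arf invariant one is routine.
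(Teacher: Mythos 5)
Your proposal is correct and takes essentially the same approach as the paper: the first assertion follows from Theorem~\ref{main thm genus 1} once $K$ is seen to be genus $1$ and algebraically slice, and the second reduces to observing that the only zero self-linking curves on $F$ are the two depicted, each of which has Arf invariant $1$ (the paper notes both have the knot type of $J$) and hence is not $0$-solvable. The only difference is one of rigor rather than route: where the paper asserts the enumeration of zero self-linking curves by inspection of the figure, you justify it by computing that the quadratic form $v^T V v$ is proportional to $pq$ and invoking the fact that each primitive class on a once-punctured torus is represented by a unique isotopy class of essential simple closed curve, which is a sound and worthwhile filling-in of detail.
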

\begin{proof}
The curves $\alpha$ and $\beta$ of Figure~\ref{fig:946example} are each derivatives of $K$, showing that $K$ is algebraically slice.  Since $K$ also has a genus $1$ Seifert surface Theorem~\ref{main thm genus 1} implies that $K$ is $1$-solvable.  

The only two curves with zero self linking on $F$ are  depicted in Figure~\ref{fig:946example}.   Each has the knot type of $J$.  Thus, if $\Arf(J) = 1$, then $J$ is not $0$-solvable, and so $K$ admits no $0$-solvable derivative. \end{proof}

\begin{figure}
\setlength{\unitlength}{1pt}
\begin{picture}(95,100)
\put(0,0){\includegraphics[height=.17\textheight]{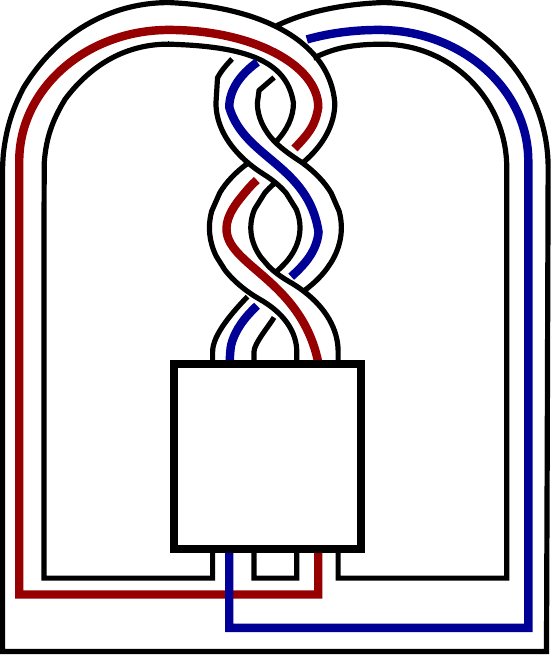}}
\put(40,28){$J$}
\put(76,19){\textcolor{blue}{$\alpha$}}
\put(12,19){\textcolor{red}{$\beta$}}
\end{picture}
\caption{
A surprising $1$-solvable knot together with its derivatives.}\label{fig:946example}
\end{figure}

In order to describe our higher genus results, we will need to make reference to the Seifert matrix.  Let $L = L_1,\dots, L_g$ be a derivative for $K$ on the Seifert surface $F$.  This collection can then be extended to a basis for $H_1(F)$, $\{L_1,\dots,L_g, L_{g+1},\dots,L_{2g}\}$.  With respect to this basis, the Seifert matrix  $M=(m_{i,j})$ has $(i,j)$-entry $m_{i,j} = \lnk(L_i,L_j^+)$.  Since $L= L_1,\dots,L_g$ is a derivative, $M$ has the form 
$$
M=\begin{bmatrix}0&A\\B&C\end{bmatrix}
$$
where $0$ is the $g\times g$ zero matrix and $A$, $B$ and $C$ are all $g\times g$ matrices.  

\begin{reptheorem}{main thm genus 2}
Let $K$ be an algebraically slice knot with genus 2 Seifert surface $F$ and derivative $L=L_1,L_2$.   Extend $L_1,L_2$ to a basis for $H_1(F)$.  Let $M=\begin{bmatrix}0&A\\B&C\end{bmatrix}$ be the resulting Seifert matrix.

If either $\overline{\mu}_{1122}(L)$ is even or $\det(A)-\det(B)$ is odd, then $K$ is 1-solvable.  
\end{reptheorem}

The Sato-Levine invariant $\overline\mu_{1122}$ is an integer valued invariant of links with zero linking number.  We recall its definition in Section~\ref{sect: Sato Levine}. 

Our genus 3 results require slightly more technical language.  Suppose that $K$ is a genus 3 algebraically slice knot with derivative $L = L_1,L_2,L_3$ and Seifert matrix $M=\begin{bmatrix}0&A\\B&C\end{bmatrix}$.
The matrix $A$ induces a  linear operator on the alternating tensor $\Z^3\wedge \Z^3\cong \Z^3$ 
by the rule $(A\wedge A)(x\wedge y) = Ax \wedge Ay$.  Similarly, $A\wedge A$ acts on $(\Z/2)^3\wedge (\Z/2)^3\cong (\Z/2)^3$.

\begin{reptheorem}{main thm genus 3}
Let $K$ be an algebraically slice knot with genus 3 Seifert surface $F$ and derivative $L=L_1,L_2, L_3$.   Extend $L_1,L_2, L_3$ to a basis for $H_1(F)$.  Let $M=\begin{bmatrix}0&A\\B&C\end{bmatrix}$ be the resulting Seifert matrix.

If $\overline\mu_{123}(L)\in \Z$ is a multiple of $(\det(A)-\det(B))$ and $(A\wedge A-B^T\wedge B^T):(\Z/2)^3\wedge (\Z/2)^3\to(\Z/2)^3\wedge (\Z/2)^3$ is onto then $K$ is $1$-solvable.  
\end{reptheorem}

We recall the definition of Milnor's triple linking number   $\overline\mu_{123}(L)$ in Section~\ref{sect: triple linking}.

We can prove a similar theorem concluding the $1$-solvability of high genus knots.  In order to state it we gather together all of the triple linking numbers into a single invariant.  For a $g$-component link $L$, $\TL(L)\in \Z^g\wedge \Z^g\wedge\Z^g$ is defined by
$$
\TL(L) = \Sum_{1\le i<j<k\le g} \overline\mu_{123}(L_i,L_j, L_k)\cdot e_i\wedge e_j\wedge e_k,
$$
where $\{e_1,\dots, e_g\}$ is the standard basis for $\Z^g$ and $A^{\wedge 3}:\Z^g\wedge \Z^g\wedge \Z^g\to \Z^g\wedge \Z^g\wedge \Z^g$ is defined by the rule $A^{\wedge 3}(x\wedge y\wedge z) = Ax\wedge Ay\wedge Az$

\begin{reptheorem}{main thm genus g}
Let $K$ be an algebraically slice knot with genus $g$ Seifert surface $F$ and derivative $L=L_1, \dots, L_g$.   Extend $L_1,\dots, L_g$ to the basis for $H_1(F)$.  Let $M=\begin{bmatrix}0&A\\B&C\end{bmatrix}$ be the resulting Seifert matrix.  If $\TL(L)\in \Z^g\wedge\Z^g\wedge\Z^g$ is in the image of $A\wedge A \wedge A -B^T \wedge B^T \wedge B^T$ and $A\wedge A-B^T\wedge B^T:(\Z/2)^g\wedge (\Z/2)^g\to(\Z/2)^g\wedge (\Z/2)^g$ is onto, then $K$ is $1$-solvable.  
\end{reptheorem}

Theorem~\ref{main thm genus 3} follows as a corollary from Theorem~\ref{main thm genus g} by setting $g=3$.  

\begin{remark}\label{basis} \normalfont Once we fix a derivative $L=L_1, \dots, L_g$, there are infinitely many choices of $L_{g+1} \dots, L_{2g}$ so that $\{L_1,\dots, L_{2g}\}$ forms a basis for $H_1(F)$. This choice does not have an effect on our assumptions on Theorems~\ref{main thm genus 2},~\ref{main thm genus 3}, and~\ref{main thm genus g}. If we use a different basis to represent a Seifert matrix, we obtain $M'=\begin{bmatrix}0&AP\\P^TB&C\end{bmatrix}$ where $P$ is a change of basis matrix. Then we see that $\det(AP)-\det(P^TB) = \pm (\det(A)-\det(B))$. Further, $A\wedge A - B^T\wedge B^T$ and $AP\wedge AP - B^TP\wedge B^TP=(A\wedge A - B^T\wedge B^T)(P\wedge P)$ have the same image and $A\wedge A \wedge A- B^T\wedge B^T\wedge B^T$ and $(A\wedge A \wedge A - B^T\wedge B^T \wedge B^T)(P\wedge P \wedge P)$ also have the same image.
\end{remark}

\subsection{Applications}

In \cite{CHL10} Cochran-Harvey-Leidy produce knots spanning an infinitely generated subgroup of $\F_n/\F_{n.5}$.  Before we make an application of Theorem \ref{main thm genus 1} to their construction, we take a moment and recall it. They consider the operators $R_k$ of Figure~\ref{fig: CHL operator} and iterate these operators $n$ times. They prove that the union of their images contain an infinite linearly independent subset.

\begin{figure}[h]
\setlength{\unitlength}{1pt}
\begin{picture}(95,120)
\put(0,10){\includegraphics[height=.17\textheight]{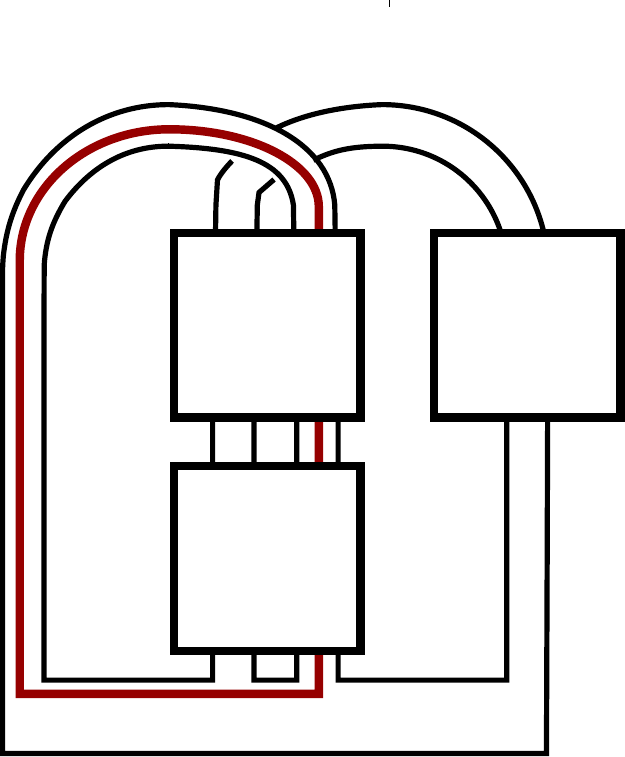}}
\put(34,36){$J$}
\put(30,70){$-k$} 
\put(75,70){$T_k$}
\put(35,0){$R_{k}(J)$}
\end{picture}
\caption{The Cochran-Harvey-Leidy knot $R_k(J)$ together with a derivative.  
The knot $T_k$ must be carefully chosen for the results of \cite{CHL10} to apply, but has no impact on our analysis.}\label{fig: CHL operator}
\end{figure}

\begin{proposition*}[\cite{CHL10}]
For $n\ge 1$, the set $$\left\{R_{k_n}(R_{k_{n-1}}({\dots}(R_{k_1}(J))\dots)) | k_1,\dots, k_n\in \Z, J\in \F_0\right\}$$
contains infinite set which is linearly independent in $\F_n/\F_{n.5}$.
\end{proposition*}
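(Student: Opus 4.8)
The plan is to split the claim into two logically independent parts. First, I would show that every element of the displayed family lies in $\F_n$; this is a solvability computation that follows directly from the satellite structure of the operators $R_k$. Second, and far more substantially, I would produce the infinite linearly independent subset of $\F_n/\F_{n.5}$ using von Neumann $\rho$-invariants as obstructions. The two parts require completely different machinery, so I would treat them separately.

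For membership in $\F_n$, the key observation is that $R_k(J)$ is obtained from a fixed pattern knot by infecting along the curve $\eta$ pictured in Figure~\ref{fig: CHL operator}, and that $\eta$ is null-homologous in the pattern complement. Consequently $\eta$ serves as a derivative curve, with the infecting knot $J$ appearing as a derivative of $R_k(J)$. The cited Proposition (\cite{COT03}, Theorem 8.9) then shows that if $J$ is $m$-solvable, $R_k(J)$ is $(m+1)$-solvable. Beginning with $J\in\F_0$ and applying this once for each of the $n$ operators $R_{k_1},\dots,R_{k_n}$ in the tower proves that $R_{k_n}(\cdots R_{k_1}(J)\cdots)\in\F_n$.

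The heart of the argument is linear independence modulo $\F_{n.5}$. Write $K$ for a generic member of the family and $M_K$ for its zero-framed surgery manifold. I would build a tower of poly-torsion-free-abelian (PTFA) coefficient systems $\phi\colon\pi_1(M_K)\to\Gamma$, where $\Gamma$ is an $n$-step solvable group assembled from the rational derived series and the higher-order Alexander modules of $K$. The COT obstruction theorem guarantees that if $K$ were $n.5$-solvable, then $\rho(M_K,\phi)$ would vanish for every such $\phi$ extending over a $4$-manifold realizing the solvability; hence any nonvanishing $\rho$-invariant detects nontriviality in $\F_n/\F_{n.5}$. To compute these invariants I would use the infection formula: when $\phi$ carries the infecting curve $\eta$ to an infinite-order element, the $\rho$-invariant splits as
\[
\rho(M_K,\phi)=\rho(M_{\text{pattern}},\phi)+\int_{S^1}\sigma_J,
\]
where $\int_{S^1}\sigma_J$ is the integral of the Levine--Tristram signature function of the infecting knot $J$. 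Choosing the infecting knots with prescribed, suitably independent signature integrals (and exploiting the integer parameters $k_i$ to scale them) then lets a tree-depth induction show that every nontrivial rational combination of family members has a nonzero $\rho$-invariant for some representation in the tower.

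The main obstacle is guaranteeing that each infecting curve $\eta$, at every level of the iterated infection tree, is sent by $\phi$ to an element of infinite order; only then does the signature integral of the corresponding $J$ actually contribute to $\rho$. This is exactly the role of the higher-order Alexander modules: I would need to verify that $\eta$ represents a nontrivial, infinite-order class in the relevant higher-order homology of $K$, which in turn requires a noncommutative localization and a careful inductive analysis of the higher-order Blanchfield form along the tree. This nontriviality of the coefficient systems --- not the signature bookkeeping --- is the technical core of the Cochran--Harvey--Leidy construction. Once it is in place, linear independence follows from the fact that the signature integrals range over a continuum, so one can select an infinite subfamily whose $\rho$-invariants are linearly independent, giving the desired infinite set in $\F_n/\F_{n.5}$.
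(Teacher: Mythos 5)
This proposition is not proved in the paper at all: it is quoted as background from \cite{CHL10}, so the only proof to compare against is the one in that reference, and your outline matches it in substance --- membership in $\F_n$ by iterating the derivative theorem of \cite{COT03}, and linear independence in $\F_n/\F_{n.5}$ via von Neumann $\rho$-invariants of PTFA coefficient systems, with the infinite-order image of the infection curves (controlled by higher-order Blanchfield/Alexander module arguments) correctly singled out as the technical core. The one loose detail is your claim that the parameters $k_i$ ``scale'' signature contributions; in \cite{CHL10} they instead determine the patterns' (coprime) Alexander polynomials, but this is immaterial for the statement as quoted, since one may fix the $k_i$ and obtain the infinite independent family by varying $J$ alone, exactly as your sketch does.
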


The purpose of the assumption $J\in \F_0$ is to guarantee that $S = R_{k_n}({\dots}(R_{k_1}(J))\dots)\in \F_n$.  Indeed, one might hope that if $J$ is not $0$-solvable, then $S$ will be $(n-0.5)$-solvable and not $n$-solvable.  As a Corollary of Theorem~\ref{main thm genus 1}, we prove that this is not the case.  The assumption that $J$ is $0$-solvable may be removed entirely from the examples of \cite{CHL10}.  

\begin{corollary}
For any $n\ge 1$, the knot $R_{k_n}({\dots}(R_{k_1}(J))\dots)$ is $n$-solvable.
\end{corollary}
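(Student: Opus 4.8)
The plan is to prove the more precise statement that, for \emph{every} knot $J$ and all $k_1,\dots,k_n\in\Z$, the iterated knot $S_n := R_{k_n}(\dots(R_{k_1}(J))\dots)$ lies in $\F_n$, by induction on $n$. The two inputs are Theorem~\ref{main thm genus 1}, which seeds the induction at the bottom level, and Theorem 8.9 of \cite{COT03}, which raises solvability by one level at each subsequent stage. Writing $S_0 := J$ and $S_j := R_{k_j}(S_{j-1})$, the whole point is that the hypothesis $J\in\F_0$ used in \cite{CHL10} is needed only to start the induction, and Theorem~\ref{main thm genus 1} is exactly what removes that need.

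First I would extract from Figure~\ref{fig: CHL operator} the structural features of the operator $R_k$ that hold for an \emph{arbitrary} companion knot $K$. The knot $R_k(K)$ bounds the evident genus~$1$ Seifert surface $F$ built from the two bands of the pattern, and the curve $\eta$ drawn in the figure is a simple closed curve on $F$. The twisting parameter $-k$ is chosen precisely so that the self-linking $\lnk(\eta,\eta^+)$ vanishes; hence $\eta$ is a derivative and $R_k(K)$ is algebraically slice for every $K$, with no condition on $K$ whatsoever. Moreover, since $R_k$ is an infection along $\eta$, the derivative $\eta$ carries the knot type of the companion $K$, while the auxiliary knot $T_k$ sits in the complementary band and, as the caption notes, is irrelevant to these algebraic features.

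With these facts the induction is immediate. For $n=1$ the knot $S_1 = R_{k_1}(J)$ is genus~$1$ and algebraically slice for any $J$, so Theorem~\ref{main thm genus 1} gives $S_1\in\F_1$ with no assumption on $J$. For the inductive step, if $S_{n-1}\in\F_{n-1}$ then $S_n = R_{k_n}(S_{n-1})$ admits the derivative $\eta$, whose knot type is $S_{n-1}$ and which is therefore $(n-1)$-solvable; Theorem 8.9 of \cite{COT03} then yields $S_n\in\F_n$. Iterating completes the induction, and the corollary follows.

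The only step demanding genuine care is the structural claim underlying the base case: that $R_k(K)$ is genus~$1$ and algebraically slice for a completely unrestricted companion $K$, that is, that the framing $-k$ forces $\lnk(\eta,\eta^+)=0$ independently of both $K$ and the choice of $T_k$. Once this is read off from the pattern, Theorem~\ref{main thm genus 1} does all the work at the bottom of the tower, and the standard derivative argument of \cite{COT03} propagates $n$-solvability up every remaining level without ever referring back to the solvability of $J$.
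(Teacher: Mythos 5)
Your proposal is correct and follows essentially the same route as the paper: induct on $n$, use Theorem~\ref{main thm genus 1} to handle the base case $R_{k_1}(J)$ (which is genus one and algebraically slice regardless of $J$), and then apply Theorem 8.9 of \cite{COT03} at each subsequent stage, using that $R_{k_{j-1}}(\dots(R_{k_1}(J))\dots)$ sits as a derivative on the genus one Seifert surface of $R_{k_j}(\dots(R_{k_1}(J))\dots)$. The extra care you take in verifying from Figure~\ref{fig: CHL operator} that the derivative curve has zero self-linking independently of the companion is exactly the structural fact the paper reads off from the same figure.
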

\begin{proof}
The proof is a straightforward induction.  Notice first that $R_{k_1}(J)$ is algebraically slice and bounds a genus $1$ Seifert surface.  Theorem~\ref{main thm genus 1} now implies that $R_{k_1}(J)$ is $1$-solvable.  

Suppose that $R_{k_{n-1}}({\dots}(R_{k_1}(J))\dots)$ is $(n-1)$-solvable. $R_{k_n}(R_{k_{n-1}}({\dots}(R_{k_1}(J))\dots))$ has a genus 1 Seifert surface on which $R_{k_{n-1}}({\dots}(R_{k_1}(J))\dots)$ is a derivative (see Figure~\ref{fig: CHL operator}).  Theorem 8.9 of \cite{COT03} now concludes that $R_{k_n}(R_{k_{n-1}}({\dots}(R_{k_1}(J))\dots))$ is $n$-solvable. \end{proof}

As an additional application, we show that not only is every connected sum of genus 1 algebraically slice knots $1$-solvable, any knot which has the same Seifert matrix as a connected sum of algebraically slice genus 1 knots is $1$-solvable (see Figure~\ref{fig: onesolvablegenustwo}).  

\begin{corollary}
Let $K$ be a genus $2$ knot with Seifert form equal to $\begin{bmatrix}V&0\\0&W\end{bmatrix}$ where $V$ and $W$ are Seifert matrices for genus $1$ algebraically slice knots. Then $K$ is $1$-solvable.
\end{corollary}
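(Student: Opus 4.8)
The plan is to verify the hypotheses of Theorem~\ref{main thm genus 2} for the obvious derivative of $K$. (For the connected sum $K_V \# K_W$ itself there is nothing to do: each summand is $1$-solvable by Theorem~\ref{main thm genus 1}, and $\F_1$ is a subgroup of $\C$, so the real content is a knot $K$ which merely \emph{shares} this Seifert form.) First I would normalize the two blocks. Since $V$ and $W$ are Seifert matrices of genus~$1$ algebraically slice knots, each admits a rank-one metabolizer; taking a generator of that metabolizer as the first basis vector and using that $V-V^T$ and $W-W^T$ are the standard symplectic form on a genus-one surface, I may assume $V=\begin{bmatrix}0&a\\a-1&c\end{bmatrix}$ and $W=\begin{bmatrix}0&a'\\a'-1&c'\end{bmatrix}$. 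Reordering the induced basis of $H_1(F)$ so that the two zero-self-linking curves $L_1,L_2$ come first puts the Seifert matrix in the form $M=\begin{bmatrix}0&A\\B&C\end{bmatrix}$ with $A=\begin{bmatrix}a&0\\0&a'\end{bmatrix}$, $B=\begin{bmatrix}a-1&0\\0&a'-1\end{bmatrix}$, and $L=L_1\cup L_2$ the resulting derivative.

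Next I would compute the determinant condition directly: $\det(A)-\det(B)=aa'-(a-1)(a'-1)=a+a'-1$. When $a$ and $a'$ have the same parity this is odd, so Theorem~\ref{main thm genus 2} immediately yields that $K$ is $1$-solvable; by Remark~\ref{basis} this conclusion does not depend on how $L_1,L_2$ were completed to a basis.

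The remaining case is $a\not\equiv a'\pmod 2$, where $\det(A)-\det(B)$ is even and I must instead show that $\overline\mu_{1122}(L)$ is even. Here I would argue that the derivative is split: the curves $L_1$ and $L_2$ lie on the two separate genus-one pieces of the surface and can be isotoped into disjoint balls, so $L=L_1\sqcup L_2$ and hence $\overline\mu_{1122}(L)=0$. Theorem~\ref{main thm genus 2} then applies. I expect this last step to be the main obstacle, since the Sato-Levine invariant is a genuinely higher-order quantity that is \emph{not} determined by the Seifert matrix alone (otherwise Theorem~\ref{main thm genus 2} would settle $1$-solvability for all genus-two algebraically slice knots); controlling it therefore requires pinning down the actual geometric position of $L_1$ and $L_2$ from the block-diagonal form rather than just its homological shadow, which is precisely what the realization in Figure~\ref{fig: onesolvablegenustwo} is there to provide.
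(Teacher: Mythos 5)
Your normalization and your same-parity case are exactly the paper's: with $V$ and $W$ in the standard form, $\det(A)-\det(B)=aa'-(a-1)(a'-1)=a+a'-1$, which is odd precisely when $a\equiv a'\pmod 2$, and Theorem~\ref{main thm genus 2} finishes. The gap is the opposite-parity case. There you assert that $L_1$ and $L_2$ ``lie on the two separate genus-one pieces of the surface and can be isotoped into disjoint balls,'' but the hypothesis is only that $K$ has the block-diagonal Seifert \emph{form}; nothing says the Seifert surface is a boundary-connected sum or that the derivative curves are split. A block-diagonal form records only homological data (all the relevant linking numbers vanish), and, as you yourself point out, $\overline\mu_{1122}(L)$ is not determined by the Seifert matrix. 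In fact the paper's own Figure~\ref{fig: onesolvablegenustwo} refutes your claim: there the two genus-one pieces of the surface clasp in a Whitehead-link pattern, so the Seifert form is exactly $\begin{bmatrix}V&0\\0&W\end{bmatrix}$ (the Whitehead link has linking number zero), yet the derivative curves form a Whitehead link, whose Sato-Levine invariant is $\pm 1$ --- odd. So in the hard case you cannot conclude $\overline\mu_{1122}(L)=0$; your hope that the figure ``pins down the geometric position'' of the derivative is backwards, since that figure exists precisely to exhibit knots with this Seifert form whose derivative is \emph{not} split.

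The paper closes the opposite-parity case purely algebraically, never touching the Sato-Levine invariant. By \cite[Corollary 4.6]{Park16}, a genus-one algebraically slice Seifert matrix satisfies $\begin{bmatrix}0&y\\y-1&w\end{bmatrix}\sim\begin{bmatrix}0&y-1\\y&w'\end{bmatrix}$; geometrically this amounts to passing to the \emph{other} metabolizer of that block. Applying this to one of the two blocks replaces $a'$ by $a'-1$, so the off-diagonal entries of the two blocks can always be arranged to have the same parity, after which $\det(A)-\det(B)$ is odd and Theorem~\ref{main thm genus 2} applies uniformly (the choice of completion of the basis is immaterial by Remark~\ref{basis}). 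This basis-change step is what your argument is missing, and it is the actual content of the corollary.
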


\begin{proof}
For some choice of basis $V=\begin{bmatrix}0&x\\x-1&z\end{bmatrix}$ and $W=\begin{bmatrix}0&y\\y-1&w\end{bmatrix}$.  By changing basis we get $V\sim \begin{bmatrix}0&x-1\\x&z'\end{bmatrix}$ and $W\sim \begin{bmatrix}0&y-1\\y&w'\end{bmatrix}$ (see \cite[Corollary $4.6$]{Park16}). Thus, for an appropriate choice of basis we may assume that $x$ and $y$ have the same parity. After a permutation of basis elements, the Seifert matrix for $K$ becomes
$$\begin{bmatrix}
0&0&x&0\\
0&0&0&y\\
x-1&0& z&0\\
0&y-1&0&w\\
\end{bmatrix}.$$
At this point, Theorem~\ref{main thm genus 2} applies.\end{proof}

\begin{figure}[h]
\setlength{\unitlength}{1pt}
\begin{picture}(130,140)
\put(-80,0){\includegraphics[height=.22\textheight]{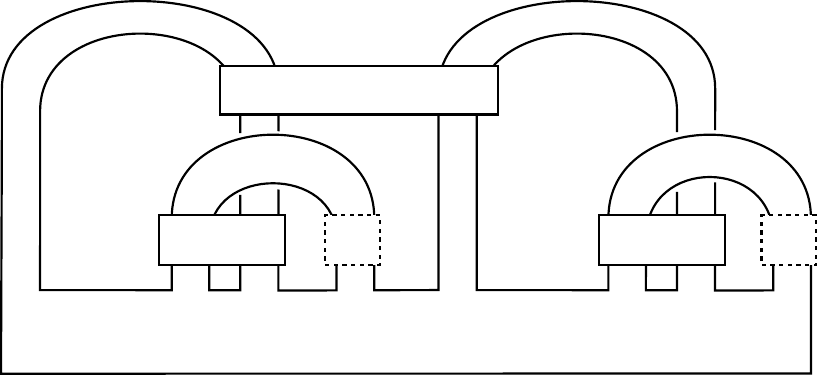}}
\put(52,48){$z$}
\put(03,48){$x$}
\put(217,48){$w$}
\put(170,48){$y$}
\put(19,105){Whitehead link}
\end{picture}
\caption{A family of $1$-solvable genus 2 knots. A solid box  with an integer represents full twists between two bands with no twist on each band. A dotted box with an integer represents full twists between two strands.}\label{fig: onesolvablegenustwo}
\end{figure}


The Alexander polynomial of a genus 2 algebraically slice knot is related to the difference of determinants which we assume is odd in Theorem~\ref{main thm genus 2}.  There is a sufficient condition for an algebraically slice genus 2 knot to be 1-solvable coming from the Alexander polynomial.  

\begin{corollary}\label{alexanderpoly}
Let $K$ be a genus $2$ algebraically slice knot with Alexander polynomial $\Delta_K(t) = a_4t^4+a_3t^3+ \cdots + a_1t+a_0$. If $a_4\equiv2 \mod 4$, then $K$ is $1$-solvable.
\end{corollary}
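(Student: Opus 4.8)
The plan is to reduce this to Theorem~\ref{main thm genus 2} by showing that the hypothesis $a_4 \equiv 2 \pmod 4$ forces $\det(A)-\det(B)$ to be odd. First I would fix a derivative $L = L_1, L_2$ for $K$, which exists since $K$ is algebraically slice of genus $2$, extend it to a basis for $H_1(F)$, and let $M = \begin{bmatrix} 0 & A \\ B & C \end{bmatrix}$ be the resulting Seifert matrix. Recall that the Alexander polynomial is computed from the Seifert matrix by $\Delta_K(t) = \det(M - tM^T)$, well-defined up to multiplication by a unit $\pm t^k$.

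The key computation is the leading coefficient. Since $M - tM^T$ is a $4\times 4$ matrix whose highest-degree-in-$t$ contribution to the determinant comes from the block $-tM^T$, the coefficient of $t^4$ is $\det(-tM^T)/t^4 = \det(M)$. Using the block form and a permutation of the columns that interchanges the first two columns with the last two (an even permutation), one computes $\det(M) = \det(A)\det(B)$. Hence $a_4 = \pm\det(A)\det(B)$; and since $-2 \equiv 2 \pmod 4$, the residue of $a_4$ modulo $4$ is unaffected by the sign and $t^k$ ambiguities in the normalization of $\Delta_K$, so the hypothesis $a_4\equiv 2\pmod 4$ is equivalent to $\det(A)\det(B)\equiv 2 \pmod 4$.

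Now the congruence $\det(A)\det(B)\equiv 2 \pmod 4$ says that the product is divisible by $2$ but not by $4$. The $2$-adic valuations therefore satisfy $v_2(\det(A)) + v_2(\det(B)) = 1$, so exactly one of $\det(A), \det(B)$ is even and the other is odd. Consequently $\det(A)-\det(B)$ is odd, and Theorem~\ref{main thm genus 2} applies to conclude that $K$ is $1$-solvable.

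There is essentially no hard step here: the only point requiring a little care is confirming that the leading coefficient of the Alexander polynomial equals $\det(A)\det(B)$ for a Seifert matrix in this block form, together with the observation that the sign and $t^k$ ambiguities do not change the residue mod $4$. Everything else is the elementary parity argument above, after which the conclusion is immediate from the genus $2$ theorem.
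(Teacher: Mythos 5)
Your proof is correct and takes essentially the same route as the paper's: identify the leading coefficient $a_4$ with $\pm\det(A)\det(B)$ via the block Seifert matrix, deduce from $a_4\equiv 2 \pmod 4$ that exactly one of $\det(A)$, $\det(B)$ is even so that $\det(A)-\det(B)$ is odd, and invoke Theorem~\ref{main thm genus 2}. (Incidentally, your sign $\det(M)=+\det(A)\det(B)$ is the correct one for $2\times 2$ blocks; the paper writes $-\det(A)\det(B)$, a harmless slip since $\pm 2$ agree mod $4$, and your explicit handling of the unit ambiguity in $\Delta_K$ is a point the paper glosses over.)
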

\begin{proof} Let $
M=\begin{bmatrix}0&A\\B&C\end{bmatrix}
$
be the Seifert matrix for $K$; then it is straightforward to verify that $\det(M) = -\det(A)\cdot \det(B) = a_4 \equiv 2 \mod 4$. Since $2$ divides $\det(A)\cdot \det(B)$ with multiplicity 1, exactly one of $\det(A)$ and $\det(B)$ is even.  Hence $\det(A)- \det(B)$ is an odd integer and  Theorem~\ref{main thm genus 2} concludes that $K$ is $1$-solvable. \end{proof}

\begin{remark} \normalfont$12a0596$ is a genus $2$ algebraically slice knot but not slice (see \cite{Knotinfo}). Since, $\Delta_{12a0596}(t) = 6t^4-20t^3+29t^2-20t+6$, we can apply Corollary~\ref{alexanderpoly} to conclude that $12a0596$ is $1$-solvable.
\end{remark}

Further, we show that there are many knots satisfying the assumption of Theorem~\ref{main thm genus 3} and we give a family of examples. 

\begin{corollary}\label{pm1}
Let $K$ be a genus $3$ algebraically slice knot with a Seifert matrix $M=\begin{bmatrix}
0 & A\\
B & C
 \end{bmatrix}$ such that $A= \begin{bmatrix}
x& 0 & 0 \\
0 & y & 0 \\
0 & 0 & z
 \end{bmatrix}$ and $B = A - \Id$. If $xyz-(x-1)(y-1)(z-1)= \pm 1$, then $K$ is $1$-solvable.
\end{corollary}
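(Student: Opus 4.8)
The plan is to apply Theorem~\ref{main thm genus 3} directly: I will verify its two hypotheses for the given diagonal data and then read off $1$-solvability. No new construction is needed, so the work is entirely a matter of unwinding the hypotheses into arithmetic conditions on $x,y,z$.

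First I would dispatch the condition on the triple linking number. Since $A$ is the diagonal matrix with entries $x,y,z$ and $B=A-\Id$, we have $\det(A)=xyz$ and $\det(B)=(x-1)(y-1)(z-1)$, so $\det(A)-\det(B)=xyz-(x-1)(y-1)(z-1)=\pm 1$ by hypothesis. As every integer is a multiple of $\pm 1$, the requirement that $\overline\mu_{123}(L)$ be a multiple of $\det(A)-\det(B)$ holds for free. The real content is therefore the surjectivity of $A\wedge A-B^T\wedge B^T$ over $\Z/2$.

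Because $A$ and $B=B^T$ are diagonal, both $A\wedge A$ and $B^T\wedge B^T$ act diagonally on $(\Z/2)^3\wedge(\Z/2)^3$ in the basis $\{e_1\wedge e_2,\,e_1\wedge e_3,\,e_2\wedge e_3\}$. I would compute the diagonal entries of $A\wedge A$ as $xy,xz,yz$ and those of $B^T\wedge B^T$ as $(x-1)(y-1),(x-1)(z-1),(y-1)(z-1)$; using $xy-(x-1)(y-1)=x+y-1$ and its cyclic variants, the difference $A\wedge A-B^T\wedge B^T$ is diagonal with entries $x+y-1$, $x+z-1$, and $y+z-1$. A diagonal operator over $\Z/2$ is onto precisely when each entry is odd, so it suffices to prove that the hypothesis forces $x,y,z$ to share a common parity.

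This parity deduction is the step where the assumption $\det(A)-\det(B)=\pm 1$ is genuinely used, and it is the only real obstacle. Expanding gives $xyz-(x-1)(y-1)(z-1)=xy+xz+yz-x-y-z+1$, so reducing the equation $\det(A)-\det(B)=\pm 1$ modulo $2$ yields $xy+xz+yz+x+y+z\equiv 0\pmod 2$. A brief case analysis shows that any assignment in which the parities of $x,y,z$ are not all equal (two odd and one even, or two even and one odd) makes this sum odd; hence $x,y,z$ are all even or all odd, each of $x+y-1,x+z-1,y+z-1$ is odd, and $A\wedge A-B^T\wedge B^T$ is onto. Both hypotheses of Theorem~\ref{main thm genus 3} now hold, and I conclude that $K$ is $1$-solvable.
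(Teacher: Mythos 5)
Your proof is correct and takes essentially the same route as the paper: both verify the two hypotheses of Theorem~\ref{main thm genus 3} (the multiple condition is vacuous since $\det(A)-\det(B)=\pm 1$, and the wedge operator is onto over $\Z/2$) and then apply the theorem. The only difference is one of detail: the paper simply asserts that $A\wedge A-B^T\wedge B^T=\Id$ over $(\Z/2)^3\wedge(\Z/2)^3$, whereas your parity argument (that $xyz-(x-1)(y-1)(z-1)$ odd forces $x,y,z$ to share a parity, making the diagonal entries $x+y-1$, $x+z-1$, $y+z-1$ all odd) is exactly the verification the paper leaves to the reader.
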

\begin{proof} Note that $\det(A) - \det(B) = \pm1$ and $A\wedge A-B^T\wedge B^T=\Id$ over $(\Z/2)^3\wedge (\Z/2)^3$. Then $K$ is $1$-solvable by Theorem~\ref{main thm genus 3}. \end{proof}

\begin{remark} \normalfont Note that for any integer $n$, if we let $x=n,y=-n,z=-n^2$, then $x,y,z$ satisfies $xyz-(x-1)(y-1)(z-1)=  1$. In particular, the family of knots depicted in Figure~\ref{fig: onesolvablegenusthree} is $1$-solvable by Corollary~\ref{pm1}.
\end{remark}

\begin{figure}[h]
\setlength{\unitlength}{1pt}
\begin{picture}(130,140)
\put(-158,0){\includegraphics[height=.20\textheight]{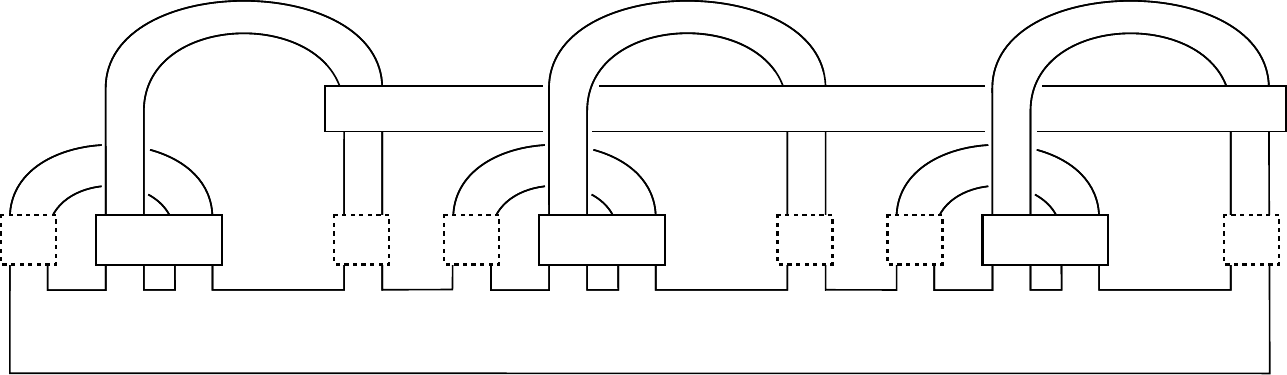}}
\put(-106,44){$n$}
\put(-37,44){$n_1$}
\put(117,44){$n_2$}
\put(273,44){$n_3$}
\put(41,44){$-n$}
\put(56,88){Any string link infection}
\put(-153,44){$J_1$}
\put(0,44){$J_2$}
\put(155,44){$J_3$}
\put(195,44){$-n^2$}
\end{picture}
\caption{A family of $1$-solvable genus $3$ knots. A solid box with an integer represents full twists between two bands with no twist on each band. A dotted box with an integer represents full twists between two strands. A dotted box with the knot $J_i$ represents a band tied into $J_i$. }\label{fig: onesolvablegenusthree}
\end{figure}

\subsection{Knots in homology spheres, homology concordance, and the solvable filtration.}\label{sect:filtration}

Before we proceed, we recall precisely what it means for a $k$-component link to be $n$-solvable.  To specialize to knots, simply take $k=1$.    For a link $L$, $M(L)$ denotes the 3-manifold given by $0$-framed surgery along each component of $L$.  For a group $G$, the derived series, $G^{(n)}$, is defined recursively by $G^{(0)}=G$ and $G^{(n+1)} = [G^{(n)}, G^{(n)}]$.

\begin{definition}
For $k, n\in \N$, a $k$-component link $L$ with zero pairwise linking numbers is called \textbf{$n$-solvable} if there exists a smooth compact oriented $4$-manifold $W$ (called an $n$-solution) with $\bdry W=M(L)$ such that 
\begin{enumerate}
\item $\Z^k\cong H_1(M(L))\to H_1(W)$ is an isomorphism.
\item $H_2(W)\cong \Z^{2\ell}$ has a basis $\{L_1,\dots, L_\ell, D_1,\dots, D_\ell\}$ consisting of smoothly embedded surfaces with trivial normal bundles which are all disjoint except that $L_i$ intersects $D_i$ transversely in a single point.
\item The inclusion induced maps $\pi_1(L_i)\to \pi_1(W)$ and $\pi_1(D_i)\to \pi_1(W)$ have images contained in $\pi_1(W)^{(n)}$.
\end{enumerate}
If additionally, 
$\im(\pi_1(L_i)\to \pi_1(W))\subseteq \pi_1(W)^{(n+1)}$
 then $L$ is called $n.5$-solvable and $W$ is called an $n.5$-solution.  
\end{definition}

Note that this definition of $n$-solvability makes no reference to the link $L$ being in $S^3$.  The concept of solvability makes perfectly good sense for links in any homology sphere.  This is important because many of our techniques will require us to consider links in homology spheres other than $S^3$ and to consider concordances in homology cobordisms.  If $L=L_1,\dots,L_k\subseteq M$ and $J=J_1,\dots, J_k\subseteq N$ are links in homology spheres then we will say that $L$ is \textbf{homology concordant} to $J$ if there exists a homology cobordism $V$ from $M$ to $N$ in which there exist disjoint embedded annuli $A=A_1,\dots,A_k$ such that for $i=1,\dots,k$, $L_i$ and $J_i$ cobound $A_i$.  A is called a \textbf{homology concordance}.

Suppose that $J$ is $n$-solvable and $L$ and $J$ are homology concordant.  It follows that their $0$-framed surgeries are homology cobordant.  By gluing this homology cobordism to an $n$-solution for $J$, one sees an $n$-solution for $L$.  Thus, the $n$-solvability of a link depends only on its homology concordance class.  

\subsection*{Acknowledgements}
The authors thank Kent Orr for several helpful conversations. We also thank Mark Powell for suggesting collaboration between the authors. Finally, we are grateful to the anonymous referee for thoughtful suggestions.

\section{Generalized satellite operators}

In \cite{CD15}, Cochran and the first author give a satellite operator based construction which preserves the concordance class and genus of a knot but changes its derivatives.  In \cite{CD16} appears a generalization of this construction, which we recall in this section.  It has the property that it sends knots in $S^3$ to knots in other homology spheres.  In Proposition~\ref{modification lemma}, we give a means using this generalized satellite operation to preserve the concordance class of a knot while changing its derivatives.

\begin{definition}[See also Definition 5.2 of \cite{CD16}]
Let $P\subseteq M$ be a knot in a homology sphere, and $\eta\subseteq M-P$ be a knot disjoint from $P$ with a chosen framing $q$.  Let $J\subseteq N$ be a knot in a homology sphere.  Let $n(\eta)$ and $n(J)$ be open tubular neighborhoods of $\eta$ and $J$ respectively.  One can build a new homology sphere $M_{\eta}(J)$ by gluing together the complements $M-n(\eta)$ and $N-n(J)$ so that the meridian of $\eta$ is identified with the $0$-framed longitude of $J$ and the $q$-framed longitude of $\eta$ is identified with the meridian of $J$. \textbf{The generalized satellite knot} $P_\eta(J)\subseteq M_\eta(J)$ is the image of $P$ in this construction.  
\end{definition}

\begin{figure}[h]
\setlength{\unitlength}{1pt}
\begin{picture}(220,120)
\put(0,10){\includegraphics[height=.17\textheight]{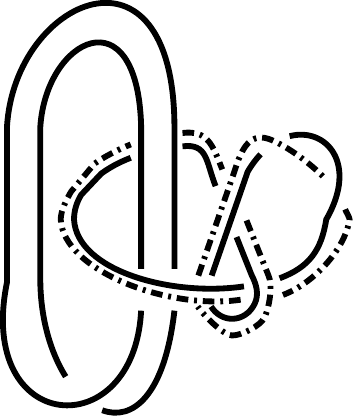}}
\put(25,0){$P$}
\put(70,60){$\eta$}
\put(120,10){\includegraphics[height=.17\textheight]{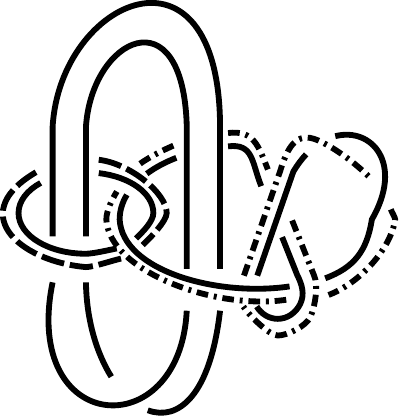}}
\put(145,0){$P$}
\put(115,75){$\eta_1$}
\put(205,60){$\eta_2$}
\end{picture}
\caption{Left: A generalized satellite operator $P_\eta$.  Right:  The operator $P_{\eta_1,\eta_2}$.  Notice that the curves $\eta$, $\eta_1$, and $\eta_2$ are allowed to have non-trivial knotting, linking, and framing.  The framings are indicated above by a choice of pushoff.}\label{fig:GeneralizedSatellite}
\end{figure}

The pair $(P, \eta)$, or $P_\eta$, is called a \textbf{generalized satellite operator}. See Figure~\ref{fig:GeneralizedSatellite}.  This definition differs somewhat from the definition presented in \cite{DR}.  When $\eta$ is a $0$-framed unknot this recovers the classical satellite construction.  We take a moment and justify the assertion that $M_\eta(J)$ is a homology sphere.  Notice the meridians of $\eta$ and $J$ respectively generate $H_1(M-n(\eta))$ and $H_1(N-n(J))$ while the $0$-framed longitude of $J$ is nullhomologus.  A Mayer-Veitoris argument shows that $M_\eta(J)$ is a homology sphere.  

Suppose $P$, $\eta_1$, and $\eta_2$ are disjoint curves in the homology sphere $M$,  $\eta_1$ and $\eta_2$ are framed, and $J_1\subseteq N_1$, $J_2\subseteq N_2$ are knots in homology spheres. By performing the satellite operation first along $\eta_1$ and then along $\eta_2$, one forms the knot $P_{\eta_1,\eta_2}(J_1,J_2)$.   

\begin{proposition}[\cite{CD15}]\label{Arf and infection}
Let $P_\eta$ be a generalized satellite operator and $J\subseteq M$ be a knot in a homology sphere.  Let $w=\lnk(P,\eta)$, then $\Arf(P_\eta(J)) = \Arf(P)+w\Arf(J)$.  
\end{proposition}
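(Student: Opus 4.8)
The plan is to deduce the formula from two classical facts, each of which continues to hold for a knot in a homology sphere because it rests only on the Seifert form and the infinite cyclic cover, both of which are available as soon as the knot is null-homologous. The first is the classical relation between the Arf invariant and the Alexander polynomial: $\Arf(K)=0$ precisely when $\Delta_K(-1)\equiv \pm 1 \pmod 8$ and $\Arf(K)=1$ precisely when $\Delta_K(-1)\equiv \pm 3 \pmod 8$. The second is the satellite formula for the Alexander polynomial. Combining them reduces the proposition to a short piece of arithmetic modulo $8$.

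First I would establish the satellite formula
\[
\Delta_{P_\eta(J)}(t)\doteq \Delta_P(t)\cdot \Delta_J(t^{w}),
\]
where $\doteq$ denotes equality up to a unit $\pm t^{k}$ and $w=\lnk(P,\eta)$. The complement of $P_\eta(J)$ in $M_\eta(J)$ decomposes as $(M-n(P)-n(\eta))\cup_T (N-n(J))$ along the torus $T=\partial n(\eta)$. Abelianizing the fundamental group sends the meridian $\mu_P$ to $t$; since $\lnk(P,\eta)=w$, the curve $\eta$, its longitude $\lambda_\eta$, and hence the meridian $\mu_J$ of $J$ (which is identified with the $q$-framed longitude of $\eta$) all map to $t^{w}$, while $\lambda_J$ (identified with $\mu_\eta$) maps trivially. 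Thus over the infinite cyclic cover the piece $N-n(J)$ contributes the Alexander module of $J$ pulled back along $t\mapsto t^{w}$, and a Mayer--Vietoris argument on the cover yields the displayed formula. Crucially, the framing $q$ and any knotting or linking of $\eta$ enter only through the homological data, which is controlled entirely by $w$.

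Next I would evaluate at $t=-1$ to get $\Delta_{P_\eta(J)}(-1)=\Delta_P(-1)\cdot \Delta_J((-1)^{w})$ and split on the parity of $w$. If $w$ is even then $(-1)^{w}=1$ and $\Delta_J(1)=\pm 1$, so $\Delta_{P_\eta(J)}(-1)=\pm\,\Delta_P(-1)$ lies in the same residue class $\{\pm 1\}$ or $\{\pm 3\}$ modulo $8$ as $\Delta_P(-1)$; hence $\Arf(P_\eta(J))=\Arf(P)=\Arf(P)+w\Arf(J)$, the last equality because $w\Arf(J)\equiv 0\pmod 2$. If $w$ is odd then $\Delta_{P_\eta(J)}(-1)=\Delta_P(-1)\cdot\Delta_J(-1)$ is a product of two odd integers, and the elementary fact that modulo $8$ a product of two residues both in $\{\pm1\}$ or both in $\{\pm3\}$ lands in $\{\pm1\}$, while a product of one from each lands in $\{\pm3\}$, translates precisely into $\Arf(P_\eta(J))=\Arf(P)+\Arf(J)=\Arf(P)+w\Arf(J)$.

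The main obstacle is the first step: verifying the satellite Alexander polynomial formula in this generalized, homology-sphere setting, where $\eta$ may be knotted, linked with $P$, and framed by $q$. Its entire content is the homological computation that $\mu_J\mapsto t^{w}$; once that is in hand the Mayer--Vietoris comparison of Alexander modules is routine. A parallel, more hands-on route avoiding the Alexander polynomial would be to build a Seifert surface for $P_\eta(J)$ from one for $P$ by tying $J$ into the $w$ bands passing through a Seifert surface for $\eta$ and to track the induced $\Z/2$ quadratic refinement directly; there I expect the bookkeeping of cancelling intersection points to be the delicate part, which is why I prefer the Alexander-polynomial computation.
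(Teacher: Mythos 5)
Your proposal is correct, but note that the paper itself contains no proof of this proposition: it is quoted from \cite{CD15}, so there is no internal argument to compare against. Your route --- Murasugi's congruence ($\Arf(K)=0$ iff $\Delta_K(-1)\equiv\pm1 \bmod 8$) combined with the satellite formula $\Delta_{P_\eta(J)}(t)\doteq\Delta_P(t)\Delta_J(t^w)$ and a mod-$8$ computation --- is the standard way this fact is established, and every step you sketch survives the generalized, homology-sphere setting for exactly the reasons you give: Murasugi's theorem is purely an algebraic statement about a Seifert matrix $V$ with $\det(V-V^T)=\pm1$, hence applies verbatim to knots in homology spheres; and in the decomposition $(M-n(P)-n(\eta))\cup_T(N-n(J))$ the Mayer--Vietoris computation shows $\mu_\eta=\lambda_J\mapsto 0$ and $\lambda_\eta\mapsto t^w$, so $\mu_J=\lambda_\eta+q\mu_\eta\mapsto t^w$, which is precisely why the framing $q$ and the knotting of $\eta$ are invisible to the Alexander module. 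Your mod-$8$ case analysis is also right: products within $\{\pm1\}$ or within $\{\pm3\}$ land in $\{\pm1\}$, and mixed products land in $\{\pm3\}$, which is exactly additivity of $\Arf$ when $w$ is odd, while $w$ even forces $\Delta_J((-1)^w)=\Delta_J(1)=\pm1$ and hence $\Arf(P_\eta(J))=\Arf(P)$. The only place deserving one extra sentence in a written-up version is the degenerate case $w=0$, where the preimage of $N-n(J)$ in the infinite cyclic cover consists of infinitely many copies of $N-n(J)$ itself rather than covers of it; the conclusion $\Delta_{P_\eta(J)}\doteq\Delta_P$ still holds, but the Mayer--Vietoris bookkeeping is slightly different from the $w\neq0$ case and should not be waved through under the same sentence.
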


Before we can prove Theorem~\ref{main thm genus 1}, we need one last result from \cite{CD16}.  

\begin{proposition}[Proposition 5.3 of \cite{CD16}.  See also Corollary 2.3 of \cite{CD15}.]\label{modification lemma}
Let $P\subseteq M$ be a knot in a homology sphere.  Let  $\eta_1$ and $\eta_2$ be disjoint knots in $M-n(P)$ which cobound an annulus $A$ in $M-n(P)$.  Pick framings on these knots so that their framed longitudes cobound a pushoff of $A$.  Then, $P_{\eta_1,\eta_2}(J,-J)$ is homology concordant to $P$ for any knot $J$.  
\end{proposition}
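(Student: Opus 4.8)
The plan is to build the homology concordance by hand, in two moves: first use the annulus $A$ to fuse the two infections into a single infection by a knot that is slice, and then realize that single infection as the boundary behavior of a four-dimensional homology cobordism assembled from a slice disk. Throughout, the point of the two hypotheses on $A$ is to arrange that the fused companion is exactly $J\# -J$, which is slice because $-J$ is the concordance inverse of $J$.

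To fuse the infections, orient $A$ so that its boundary components $\eta_1$ and $\eta_2$ receive opposite orientations, pick a sub-band $b\subseteq A$ running from $\eta_1$ to $\eta_2$, and let $\eta$ be the curve obtained by banding $\eta_1$ to $\eta_2$ along $b$. The hypothesis that the framed longitudes of $\eta_1$ and $\eta_2$ cobound a pushoff of $A$ guarantees that $b$ carries the two framings to a single untwisted framing on $\eta$. Banding two companion curves in this way has the effect of connect-summing their companion knots, so the resulting homology sphere $M_{\eta_1,\eta_2}(J,-J)$ is identified with $M_\eta(J\# -J)$ by a homeomorphism fixing $P$ and sending $P_{\eta_1,\eta_2}(J,-J)$ to $P_\eta(J\# -J)$. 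Since $-J$ is the concordance inverse of $J$, the companion $K:=J\# -J$ is slice: it bounds a disk $\Delta$ in a homology four-ball $Z$.

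It then suffices to show that infection of $M$ along the single curve $\eta\subseteq M-n(P)$ by the slice knot $K$ yields a knot homology concordant to $P$. Write $E(\Delta)=Z-n(\Delta)$; because $\Delta$ is a disk, $E(\Delta)$ has the homology of $S^1$, and its boundary is the zero-surgery $M(K)=E(K)\cup_{T^2}ST$, where $E(K)$ is the exterior of $K$ and the solid torus $ST$ meets $E(K)$ so that its meridian is the zero-framed longitude of $K$. Under the infection identification this $ST$ is precisely the solid torus $n(\eta)$, so I may form $V=(M\times[0,1])\cup E(\Delta)$ by gluing $E(\Delta)$ to $M\times\{1\}$ along $n(\eta)\times\{1\}=ST$. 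The lower boundary of $V$ is $M$ and the upper boundary is $(M-n(\eta))\cup E(K)=M_\eta(K)$. A Mayer--Vietoris argument, using that $n(\eta)\hookrightarrow E(\Delta)$ and $n(\eta)\hookrightarrow M\times[0,1]$ each induce an isomorphism on $H_1$, shows that $V$ is a homology cobordism from $M$ to $M_\eta(K)$. As $\eta$ is disjoint from $P$, the product annulus $P\times[0,1]$ lies in $(M-n(\eta))\times[0,1]\subseteq V$, disjoint from $E(\Delta)$, and is the required homology concordance from $P$ to $P_\eta(K)=P_{\eta_1,\eta_2}(J,-J)$.

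Once the setup is in place the homological bookkeeping is routine, and the survival of the product annulus is immediate. I expect the genuine difficulty to lie in the fusion step: verifying carefully that banding along $A$ produces precisely the \emph{slice} companion $J\# -J$, with the correct orientation and an untwisted framing, rather than a twisted or wrongly oriented companion. The two hypotheses on $A$ are exactly what make this work — orienting the annulus forces the two companions to be mutual inverses, while the condition that the framed longitudes cobound a pushoff of $A$ forces the connecting band to be untwisted — and pinning down the resulting homeomorphism of ambient homology spheres, together with its effect on $P$, is where the substantive content sits.
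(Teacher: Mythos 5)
Your second stage is sound: granting a single infection curve $\eta$ with a slice companion, gluing the slice-disk exterior $E(\Delta)$ to $M\times[0,1]$ along $n(\eta)\times\{1\}$ and checking homology with Mayer--Vietoris is a correct, standard way to produce a homology concordance. The fatal problem is the fusion step, and it is exactly where you suspected the content lay: banding the two infection curves does \emph{not} convert the double infection into a single infection by the connected sum of the companions. Here is a counterexample to your claimed homeomorphism. Let $M=S^3$, let $P$ be the unknot, and let $\eta_1,\eta_2$ be two Seifert-framed parallel copies of a meridian of $P$, cobounding the pushoff annulus $A$ in $M-n(P)$; the framing hypothesis of the proposition holds, with both framings the $0$-framing. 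Since $P$ pierces the disk of each $\eta_i$ once, $P_{\eta_1,\eta_2}(J,-J)=J\#(-J)$, which is a nontrivial knot whenever $J$ is, by additivity of genus. Your fused curve $\eta=\eta_1\#_b(-\eta_2)$, however, bounds the embedded disk $A-n(b)$ in $M-n(P)$, and the framing it inherits is the disk framing; infection along such a curve by \emph{any} knot $K$ returns $(S^3,P)$ unchanged, because gluing $E(K)$ to the complement of a $0$-framed unknot under the infection identification is just the trivial meridional filling of $E(K)$. So $P_\eta(J\#-J)$ is the unknot while $P_{\eta_1,\eta_2}(J,-J)$ is $J\#-J$: no homeomorphism can match them, and your identification ``$M_{\eta_1,\eta_2}(J,-J)\cong M_\eta(J\#-J)$ fixing $P$'' is false. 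The heuristic ``banding companion curves connect-sums the companions'' fails because infection only records how strands pierce the relevant disk or Seifert surface, and banding $\eta_1$ to the reversed $\eta_2$ cancels precisely those intersections.

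There is a correct fusion statement that would rescue your two-stage architecture: infection along the two boundary components of $A$ (oriented as parallel copies and framed by $A$) by $J$ and $-J$ equals infection along the \emph{core} of $A$ by $J\#-J$; the core, unlike the band sum, links $P$ exactly as $\eta_1$ does. With that replacement your second stage applies verbatim. Even so, note that this is a genuinely different route from the one the paper relies on: the paper quotes this proposition from \cite{CD16}, and the proof it actually gives of the generalization (Proposition~\ref{string link mod}) builds the homology cobordism in one step, excising a thickened copy of the annulus (there, a handlebody cross $[0,1]$) from $M\times[0,1]$, gluing in $E(L)\times[0,1]$, and running Mayer--Vietoris twice. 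That argument never invokes sliceness of $J\#-J$, which is why it extends to string-link infections, where the paper needs it and where no connected-sum fusion is available.
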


We are now ready to prove the first of the main theorems.

\section{Proof of Theorem \ref{main thm genus 1}.}\label{sect: proof of 1}

In this section we prove the following result, from which Theorem \ref{main thm genus 1} follows.

\begin{proposition}\label{main thm genus 1'}
Let $K$ be an algebraically slice knot of genus $g$.  Then $K$ is homology concordant to a knot $K'$ bounding a genus $g$ Seifert surface with a derivative for which every component is $0$-solvable as a knot.  
\end{proposition}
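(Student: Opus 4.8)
The plan is to leverage Proposition~\ref{modification lemma} to surgically alter the derivative of $K$ one component at a time, replacing each component by a knot with the same Arf invariant but which is $0$-solvable, all while preserving the homology concordance class of $K$ (and hence its genus and the fact that it bounds the same Seifert surface). Recall from Proposition~\ref{modification lemma} that if $\eta_1$ and $\eta_2$ cobound an annulus in the complement of $P$, then $P_{\eta_1,\eta_2}(J,-J)$ is homology concordant to $P$. The key idea will be to set up such a parallel pair of infection curves $\eta_1, \eta_2$ that are each parallel copies of a given derivative component $L_i$ (pushed off along the Seifert surface), so that infecting by $(J,-J)$ effects a connected sum of $L_i$ with $J \# (-J)$-type modifications without changing the ambient knot's concordance class.

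First I would fix an algebraically slice $K$ of genus $g$ with Seifert surface $F$ and derivative $L = L_1,\dots,L_g$, where each $L_i$ has $\lnk(L_i, L_j^+)=0$. For each $i$, I would take two parallel pushoffs $\eta_1^{(i)}, \eta_2^{(i)}$ of $L_i$ lying in a collar of $F$ in the knot complement; because they are parallel copies of the same curve they cobound an embedded annulus disjoint from $K$, and I can arrange their framings to satisfy the hypothesis of Proposition~\ref{modification lemma}. Infecting by $(J_i, -J_i)$ for a suitable choice of $J_i$ then changes the knot type of the derivative component $L_i$ by connect-summing in $J_i$ (from $\eta_1^{(i)}$) and $-J_i$ (from $\eta_2^{(i)}$) at two nearby points of $L_i$, producing a new derivative component $L_i' = L_i \# J_i \# (-J_i)$ on the surface. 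By Proposition~\ref{Arf and infection} and the additivity of the Arf invariant under connected sum, the Arf invariant of the modified component is controlled: since $\Arf(-J_i) = \Arf(J_i)$, I would choose $J_i$ so that $\Arf(L_i \# J_i \# (-J_i)) = \Arf(L_i) + 2\Arf(J_i) \equiv \Arf(L_i) \pmod 2$, which means I cannot change the Arf invariant this way—so instead the point is to choose $J_i$ to untie $L_i$ into a $0$-solvable knot type whose Arf invariant already matches. The honest statement is that $\Arf(L_i)$ is an invariant I cannot alter, so I must verify that a derivative component of an algebraically slice knot has vanishing Arf invariant, and then arrange $J_i$ to reduce $L_i$ to the unknot (or any $0$-solvable representative) of the same Arf class.

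The crux, and the step I expect to be the main obstacle, is showing that each derivative component $L_i$ already has $\Arf(L_i)=0$ as a knot, so that replacing it by a $0$-solvable knot (which by the earlier discussion means precisely a knot with vanishing Arf invariant) of the same Arf invariant is consistent; and then verifying that the single modification can simultaneously untie all of $L_1,\dots,L_g$ without the several infections interfering. I would handle the independence of the infections by noting that the curves $L_1,\dots,L_g$ are disjoint on $F$, so their parallel-pushoff annuli can be taken disjoint, and the successive applications of Proposition~\ref{modification lemma} can be composed (each preserves homology concordance class and is supported in a neighborhood of a distinct $L_i$). For the Arf computation, I expect to use that the derivative condition $\lnk(L_i,L_j^+)=0$ forces the relevant framings to be even, so that the self-pairing governing $\Arf(L_i)$ vanishes; alternatively, one may appeal to the fact that being a derivative of an algebraically slice knot constrains each $L_i$ to lie in $\F_0$ after the modification. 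Assembling these, the knot $K' = K_{\eta_1,\eta_2,\dots}(J_1,-J_1,\dots,J_g,-J_g)$ is homology concordant to $K$, bounds a genus $g$ Seifert surface (the image of $F$), and carries a derivative whose every component is now a $0$-solvable knot, which is exactly the conclusion.
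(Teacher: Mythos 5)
Your proposal has a genuine gap, and it sits exactly at the point the paper's proof is designed to overcome. Your infection scheme uses two parallel pushoffs $\eta_1^{(i)},\eta_2^{(i)}$ of $L_i$ itself, and, as you notice mid-argument, this can never change $\Arf(L_i)$: both curves have the same linking number with $L_i$ (indeed zero, since the derivative condition gives $\lnk(L_i,L_i^{\pm})=0$), so Proposition~\ref{Arf and infection} applied to the infection by $(J_i,-J_i)$ gives a change of $\lnk(L_i,\eta_1^{(i)})\Arf(J_i)+\lnk(L_i,\eta_2^{(i)})\Arf(-J_i)\equiv 0 \bmod 2$. (Also, infection along such curves is not a connected sum; that would require the $\eta$'s to be meridians of $L_i$, and a meridian pair fares no better, since again both curves link $L_i$ identically.) Your fallback claim --- that every derivative component of an algebraically slice knot already has vanishing Arf invariant --- is false, and in fact contradicts the paper's motivating example: the genus-one knot of Figure~\ref{fig:946example} is algebraically slice, and when $\Arf(J)=1$ its only derivatives are copies of $J$, each with Arf invariant $1$. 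The derivative condition constrains only the Seifert framing of $L_i$, not its knot type, so $\Arf(L_i)$ is completely unconstrained; neither your ``even framing'' argument nor an appeal to membership in $\F_0$ can rescue this, and if derivatives automatically had $\Arf=0$ the proposition would be nearly vacuous.

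The paper's proof resolves the difficulty with a different choice of infection curves: a single curve $\eta$ lying \emph{on} $F$, disjoint from the components with $\Arf=0$ and meeting each component with $\Arf=1$ transversely in one point. Its two pushoffs $\eta^+,\eta^-$ off the surface cobound an annulus in the complement of $K$ (the normal thickening of $\eta$), so Proposition~\ref{modification lemma} applies and $K':=K_{\eta^+,\eta^-}(J,-J)$ is homology concordant to $K$. The point is that now the two infection curves link $L_i$ \emph{asymmetrically}: $\lnk(L_i,\eta^+)-\lnk(L_i,\eta^-)$ equals the algebraic intersection number $L_i\cdot\eta$, so taking $\Arf(J)=1$ changes $\Arf(L_i)$ by $L_i\cdot\eta \bmod 2$. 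This flips the Arf invariants of exactly the components that need it and fixes the others, yielding a derivative all of whose components have $\Arf=0$, i.e.\ are $0$-solvable. This asymmetric-linking mechanism is precisely what your construction lacks: with $(J,-J)$ infections (which you are forced to use to preserve the homology concordance class), the Arf invariant of $L_i$ changes by $\bigl(\lnk(L_i,\eta_1)+\lnk(L_i,\eta_2)\bigr)\Arf(J) \bmod 2$, and curves parallel to $L_i$ make this difference vanish identically.
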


\begin{proof}

Let $K\subseteq M$ be an algebraically slice knot in a homology sphere.  Let $F$ be a genus $g$ Seifert surface with derivative $L=L_1,\dots, L_g$.  If $\Arf(L_i)=0$ for all $i$ then we are done.  Otherwise, we reorder $L$ so that  $\Arf(L_{1}) = \dots = \Arf(L_{k}) = 0$ and $\Arf(L_{k+1}) = \dots = \Arf(L_{g})= 1$.  There exists a curve $\eta$ on $F$ disjoint from $L_1, \dots, L_k$ which intersects each of $L_{k+1},\dots, L_g$ transversely in a single point.  

Notice that the pushoffs $\eta^+$ and $\eta^-$ cobound an annulus in the complement of $K$.  For $q=\lnk(\eta^+,\eta^-)$, the $q$-framed longitudes cobound a pushoff of this annulus. Proposition~\ref{modification lemma} applies and $K':=K_{\eta^+,\eta^-}(J,-J)$ is homology concordant to $K$ for any knot $J$.  

Let $J$ be a knot with $\Arf(J)= 1$.  Notice that since $\eta^+$ and $\eta^-$ are disjoint from the genus $g$ surface $F$, $K'$ still bounds a genus $g$ Seifert surface.  Moreover, on this Seifert surface we see derivative  $L' := L_{\eta^+,\eta^-}(J,-J)$.  By Proposition~\ref{Arf and infection}, 
$$
\Arf(L_i') = \Arf(L_i) + \lnk(L_i, \eta^+) - \lnk(L_i, \eta^-).
$$
Now, $\lnk(L_i, \eta^+) - \lnk(L_i, \eta^-)$ is precisely the algebraic intersection number $L_i\cdot \eta$.   Thus, for $i=1, \dots, k$, 
$$
\Arf(L_i') = \Arf(L_i)+L_i\cdot \eta= 0+0=0,
$$
while for $i=k+1,\dots,g$
$$
\Arf(L_i') = \Arf(L_i)+L_i\cdot \eta= 1+1= 0.
$$

Thus, $K$ is homology concordant to $K'$ which admits a genus $g$ Seifert surface with a derivative $L'$, where each component is $0$-solvable. \end{proof}

We are ready to prove Theorem~\ref{main thm genus 1}.

\begin{theorem}\label{main thm genus 1}
If a knot $K$ has genus $1$ and is algebraically slice, then $K$ is $1$-solvable.
\end{theorem}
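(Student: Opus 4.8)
The plan is to combine Proposition~\ref{main thm genus 1'} with the ``derivative raising solvability'' result, Theorem 8.9 of \cite{COT03}, which is recalled in the introduction. By Proposition~\ref{main thm genus 1'} applied with $g=1$, the knot $K$ is homology concordant to a knot $K'$ bounding a genus $1$ Seifert surface with a derivative $L'$ whose single component is $0$-solvable as a knot. Since $K$ has genus $1$, the derivative $L'$ consists of exactly one simple closed curve, so $L'$ being $0$-solvable as a knot is precisely the statement that the derivative is $0$-solvable as a link. The key observation is that $n$-solvability depends only on the homology concordance class of the link, as established at the end of Subsection~\ref{sect:filtration}; hence it suffices to prove that $K'$ itself is $1$-solvable.

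First I would invoke Theorem 8.9 of \cite{COT03}: if $L$ is a derivative for a knot and $L$ is $n$-solvable, then the knot is $(n+1)$-solvable. Applying this with $n=0$, the fact that $L'$ is a $0$-solvable derivative for $K'$ immediately yields that $K'$ is $1$-solvable. Finally, since $K$ and $K'$ are homology concordant, their zero-framed surgeries are homology cobordant; gluing a $1$-solution for $K'$ to this homology cobordism produces a $1$-solution for $K$, so $K$ is $1$-solvable as well. This last gluing step is exactly the argument sketched at the close of Subsection~\ref{sect:filtration}, so it requires no new work.

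I expect the genuine content of the theorem to reside entirely in Proposition~\ref{main thm genus 1'}, whose proof has already been given: the clever infection construction $K_{\eta^+,\eta^-}(J,-J)$ using Proposition~\ref{modification lemma} kills the Arf invariant of each derivative component while preserving both the genus and the homology concordance class. The deduction of Theorem~\ref{main thm genus 1} from that proposition is therefore not the hard part—it is a direct application of the cited result of Cochran--Orr--Teichner combined with homology concordance invariance of solvability. The only point that deserves care is confirming that in genus $1$ the derivative is a single knot, so that ``$0$-solvable as a knot'' coincides with ``$0$-solvable as a link,'' making Theorem 8.9 of \cite{COT03} directly applicable.
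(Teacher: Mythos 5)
Your proposal is correct and follows essentially the same route as the paper's own proof: apply Proposition~\ref{main thm genus 1'}, note that a derivative on a genus~$1$ surface has a single component, invoke Theorem~8.9 of \cite{COT03} to conclude $K'$ is $1$-solvable, and finish by homology concordance invariance of solvability. Your added remarks (that the real content lies in Proposition~\ref{main thm genus 1'}, and that ``$0$-solvable as a knot'' coincides with ``$0$-solvable as a link'' in genus~$1$) match the paper's reasoning exactly.
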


\begin{proof}
Let $K$ be a genus $1$ algebraically slice knot.  According to Proposition \ref{main thm genus 1'}, $K$ is homology concordant to a knot $K'$ in a homology sphere which admits a genus $1$ Seifert surface on which there lies a derivative $L'$ whose every component is $0$-solvable.  Since $L'$ is a derivative on a genus $1$ Seifert surface, $L'$ has only one component.  Thus, $L'$ is $0$-solvable and by \cite[Proposition 8.9]{COT03}, $K'$ is $1$-solvable.  Since $K$ is homology concordant to $K'$, $K$ is $1$-solvable. \end{proof}

\section{A string link modification lemma.}\label{sect:modification lemma}

The techniques of the previous section fail for higher genus knots, as the Arf invariant is insufficient to classify $0$-solvability of links. Rather, in \cite{Martin15}, the second author provides a complete set of obstructions to $0$-solvability.  
Namely, they are pairwise linking numbers, the Arf-invariant of the individual components, the Sato-Levine invariants (mod 2) of the $2$-component sublinks and the triple linking numbers of the $3$-component sublinks.  We will define these invariants when we need them in Sections \ref{sect: Sato Levine} and \ref{sect: triple linking}.

The satellite operation (even the generalized satellite operation)  changes the $\Arf$-invariant, but cannot change the Sato-Levine invariant or triple linking number.  If we are to replace a derivative with a $0$-solvable link, we must provide a new modification.  

\begin{definition}
Let $D$ be the closed 2-dimensional unit disk.  Let $p_1,\dots,p_n$ be $n$ distinct points in the interior of $D$.  An \textbf{$n$-component pure string link} is a collection of $n$ disjoint properly embedded arcs $L_1,\dots, L_n$ in $D\times[0,1]$  such that $L_k$ runs from $(p_k,0)$ to $(p_k,1)$.
\end{definition}

Since every string link considered in this paper is a pure string link we will suppress the word \emph{pure} from our notation throughout.
Given a string link $L$ we let $-L$ denote the reverse of the mirror image of $L$.  

\begin{figure}[h]
\setlength{\unitlength}{1pt}
\begin{picture}(0,170)
\put(-150,20){\includegraphics[height=.20\textheight]{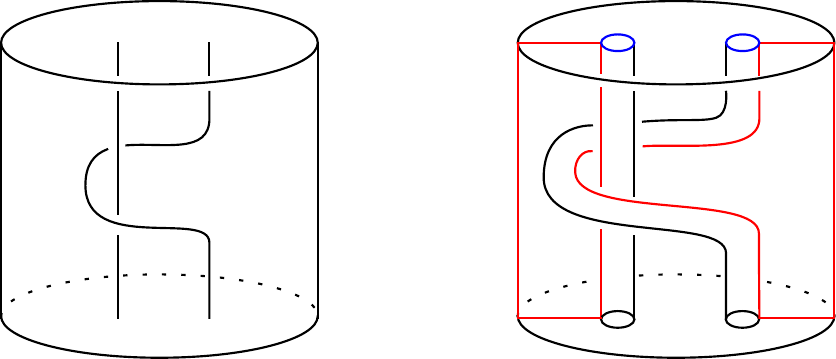}}
\put(-96,0){$L$}
\put(-114,156){$L_1$}
\put(-80,156){$L_2$}
\put(86,0){$E(L)$}
\put(66,156){$m_1$}
\put(112,156){$m_2$}
\put(20,130){$\lambda_1$}
\put(160,130){$\lambda_2$}
\end{picture}
\caption{A string link $L$ and its complement $E(L)$. The blue circles $m_1$ and $m_2$ are meridians and the red circles $\lambda_1$ and $\lambda_2$ are $0$-framed longitudes.}\label{fig: stringlink}
\end{figure}

For any string link $L = L_1,\dots, L_n$ let $E(L)$ be the complement of a neighborhood of $L$.  The meridian of $L_k$, $m_k\subseteq \bdry E(L)$ is the boundary of a small disk centered at $(p_k,0)$.  The $0$-framed longitude $\lambda_k\subseteq \bdry E(L)$ of $L_k$ intersects $m_k$ transversely in a single point, is disjoint from all other meridians, and is nullhomologous in $E(L_k)$, the exterior of the arc $L_k$.   See Figure~\ref{fig: stringlink}.  When $L$ is not clear from context, we specify $m_k(L)$ and $\lambda_k(L)$.   The linking number between $L_j$ and $L_k$ is given by counting up intersection points (with sign) between $L_j$ and a surface bounded by $\lambda_k$ in $E(L_k)$.  Thus, $L$ has zero pairwise linking numbers if and only if each $\lambda_k$ is nullhomologous in the complement of $L$.  For $q\in \Z$, the $q$-framed longitude of $L_k$ consists of the $0$-framed longitude together with $p$ copies of the meridian of $L_k$.  

Let $P\subseteq M$ be a link in a homology sphere.  Let $\alpha = \alpha_1\vee\dots\vee\alpha_n$ be an embedded wedge of circles in the complement of $P$.  Let $m_1,\dots, m_n$ be a collection of meridians for $\alpha_1,\dots,\alpha_n$.  Let $\lambda_1,\dots, \lambda_n$ be a disjoint collection of (not necessarily nullhomologous) longitudes.   When $\alpha$ is not clear from context we specify $m_k(\alpha)$ and $\lambda_k(\alpha)$.  The pair $(P,\alpha)$ or $P_\alpha$ is a \textbf{generalized string link operator}. See Figure \ref{fig: generalized operator}. $P_\alpha$ is a \textbf{classical string link operator} if the longitudes $\lambda_1,\dots,\lambda_n$ bound disjoint disks in the complement of $\alpha$.  

\begin{figure}[h]
\setlength{\unitlength}{1pt}
\begin{picture}(0,120)
\put(-58,0){\includegraphics[height=.17\textheight]{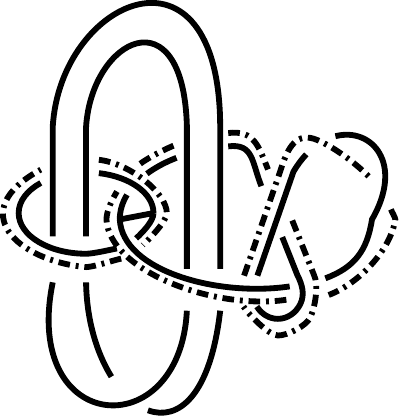}}
\put(50,33){$\alpha$}
\end{picture}
\caption{A generalized string link operator $P_\alpha$.  The dashed lines indicate a choice of longitudes. 
}\label{fig: generalized operator}
\end{figure}

Given a string link $L$ with zero linking numbers and a generalized string link operator $P_\alpha$ in a homology sphere $M$, one can form a new homology sphere $M_\alpha(L)$ by cutting out a neighborhood of $\alpha$, $n(\alpha)$ and gluing in the complement of $L$ so that $m_k(\alpha)$ is identified with $\lambda_k(L)$ and so that $m_k(L)$ is identified with $\lambda_k(\alpha)$.

As is the case with generalized satellite operators, $M_\alpha(L)$  is a homology sphere.  The result of infection, $P_\alpha(L)\subseteq M_\alpha(L)$ is the image of $P$ in this new manifold.   Let $\alpha$ and $\beta$ be disjoint wedges of circles with choices of longitudes. Let $L$ and $ J$ be string links with the appropriate number of components. We iteratively form $P_{\alpha, \beta}(L,J)$. We introduce the following definition to be suitable to our needs. 

\begin{definition}\label{abstract wedge}
Let $W$ be a homology cobordism. Let $\alpha$ and $\beta$ be disjoint wedges of $k$-circles in a single boundary component of $W$. We say $\alpha$ and $\beta$ are {\bf framed wedge cobordant} if, for a genus $k$ handlebody $H_k$ with meridonal curves $m_1, \dots, m_k$ and longitudinal curves $\lambda_1, \dots, \lambda_k$, there exists a smooth embedding $\Phi: H_k \times [0,1] \to W$ such that: 
\begin{itemize}
\item the restriction $\Phi|_{H_k\times\{0\}}$ sends the meridonal curves for $H_k$ to $m(\alpha_i)$ ,
\item $\Phi|_{H_k\times\{0\}}$ sends longitudes to $\lambda(\alpha_i)$,
\item  $\Phi|_{H_k\times\{1\}}$ sends  meridians to the reverse of $m(\beta_i)$, and
\item $\Phi|_{H_k\times\{1\}}$ sends  longitudes to $\lambda(\beta_i)$. 
\end{itemize}
\end{definition}

In \cite[Lemma 3.2]{JKP} Jang, Kim and Powell generalize \cite[Theorem 3.1]{CD15} to the setting of classical string link infection.  The result further generalizes to generalized string link infection.  Our proof is highly similar to theirs, as well as \cite[Theorems 2.2 and 3.1]{Park15},  and we merely summarize the ideas.

\begin{proposition}\label{string link mod}
Let $P\subseteq M$ be a link in a homology sphere. Let $\alpha, \beta \subseteq M$ be framed wedge cobordant in $(M - P) \times [0,1]$. Then for any $n$-component string link $L$, 
$
P_{\alpha,\beta}(L,-L)
$
is homology concordant to $P$.
\end{proposition}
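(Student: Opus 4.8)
The plan is to build the homology concordance by a four-dimensional version of the infection construction, exactly parallel to \cite[Lemma 3.2]{JKP} and \cite[Theorem 3.1]{CD15}. I would begin with the product $M\times[0,1]$, which is a homology cobordism from $M$ to $M$ and which carries the product annuli $P\times[0,1]$ joining $P$ to itself. The framed wedge cobordism of Definition~\ref{abstract wedge} supplies a smooth embedding $\Phi\colon H_n\times[0,1]\to (M-P)\times[0,1]$ (with $H_n$ the genus $n$ handlebody) whose two ends $\Phi(H_n\times\{0\})$ and $\Phi(H_n\times\{1\})$ are neighborhoods of $\alpha$ and $\beta$ in the bottom $M\times\{0\}$, and which is disjoint from the annuli $P\times[0,1]$. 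Write $T=\Phi(H_n\times[0,1])$ for this tube; its lateral boundary $\Phi((\bdry H_n)\times[0,1])$ is a copy of $(\bdry H_n)\times[0,1]$ lying in the interior of $M\times[0,1]$.

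The central step is to remove $\operatorname{int}(T)$ from $M\times[0,1]$ and glue in $E(L)\times[0,1]$, matching the lateral boundary $(\bdry E(L))\times[0,1]$ to $\Phi((\bdry H_n)\times[0,1])$ level-wise by the meridian--longitude interchange of the infection operation. The slice $E(L)\times\{0\}$ fills the hole left by $n(\alpha)$ and performs infection by $L$ there, while $E(L)\times\{1\}$ fills the hole left by $n(\beta)$; because Definition~\ref{abstract wedge} prescribes that $\Phi|_{H_n\times\{1\}}$ send the meridians to the \emph{reverses} of $m(\beta)$, this latter gluing is orientation reversing and hence realizes infection by $-L$. Thus the bottom boundary of the resulting $4$-manifold $V$ is $M_{\alpha,\beta}(L,-L)$, the top boundary is the untouched $M\times\{1\}=M$, and the annuli $P\times[0,1]$ survive to join $P_{\alpha,\beta}(L,-L)$ to $P$. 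The orientation reversal built into Definition~\ref{abstract wedge} is precisely what lets a single copy of $E(L)\times[0,1]$ produce $L$ at one end and $-L$ at the other.

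It remains to check that $V$ is a homology cobordism, and this is where the genuine work lies. I would compute $H_*(V)$ by comparison with $H_*(M\times[0,1])$ via Mayer--Vietoris, using the common piece $A=(M\times[0,1])-\operatorname{int}(T)$ together with the two fillings $H_n\times[0,1]$ and $E(L)\times[0,1]$, both meeting $A$ along $(\bdry H_n)\times[0,1]\simeq \bdry H_n$. The key point is that, since $L$ has vanishing pairwise linking numbers (as required for the infection to be defined), $E(L)$ is a homology handlebody: the map $H_1(\bdry E(L))\to H_1(E(L))$ kills the longitudes $\lambda_i(L)$ and sends the meridians $m_i(L)$ to a basis. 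Under the infection gluing these are interchanged with the meridians $m_i(\alpha)$ and longitudes $\lambda_i(\alpha)$, so that $H_1(\bdry H_n)\to H_1(H_n)$ and $H_1(\bdry H_n)\to H_1(E(L))$ have the same kernel and isomorphic image; in the remaining degrees both maps are forced (an isomorphism in degree $0$, and zero in degree $2$). Consequently the Mayer--Vietoris sequences for $M\times[0,1]$ and for $V$ agree, giving $H_*(V)\cong H_*(M\times[0,1])$ with both boundary inclusions inducing isomorphisms. This is exactly the four-dimensional refinement of the Mayer--Vietoris argument already used in Section~2 to see that $M_\alpha(L)$ is a homology sphere.

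The main obstacle is not the idea but the bookkeeping of this last computation: one must verify that the level-wise meridian--longitude interchange really induces an isomorphism of the two Mayer--Vietoris sequences, so that replacing $H_n\times[0,1]$ by $E(L)\times[0,1]$ leaves the homology unchanged, and simultaneously confirm that the orientation reversal at the $\beta$-end yields $-L$ rather than $L$. Since this verification is carried out in detail in \cite[Lemma 3.2]{JKP} and \cite[Theorems 2.2 and 3.1]{Park15}, I would, as indicated above, record the construction of $V$ and its homological input and refer to those sources for the routine checks.
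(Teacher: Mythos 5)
Your proposal is correct and takes essentially the same route as the paper's proof: the same construction replacing $\Phi(H_n\times[0,1])$ by $E(L)\times[0,1]$ inside $M\times[0,1]$ via the level-wise meridian--longitude interchange, the same observation that the product annuli $P\times[0,1]$ are disjoint from the tube and hence give the concordance, and a Mayer--Vietoris verification of the homology cobordism condition hinging on the fact that the meridians of $\alpha$ are glued to the nullhomologous longitudes of $L$. The only difference is organizational: the paper first computes the homology of $M\times[0,1]-\Phi(H_n\times[0,1])$ and then runs a second Mayer--Vietoris, whereas you compare the Mayer--Vietoris sequences of the two fillings $H_n\times[0,1]$ and $E(L)\times[0,1]$ directly, which is a cosmetic (and, if anything, slightly more robust) repackaging of the same argument.
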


\begin{proof}

Let $A = \Phi( H_k\times[0,1])$. Construct a 4-manifold $W$ by starting with $M\times[0,1]$ and replacing $A$ with $E(L)\times[0,1]$ so that $\Phi(m(L_k)\times [0,1])$ is identified to $m(\alpha_k) \times [0,1]$ and $\Phi(\lambda(L_k)\times [0,1])$ is identified with $\lambda(\alpha_k) \times [0,1]$.  This modification does not change $P\times\{0\}\subseteq M\times\{0\}$ and replaces $P\times\{1\}\subseteq M\times\{1\}$ by $P_{\alpha,\beta}(L,-L)$.  Since the annulus $P\times[0,1]$ is disjoint from $A$, we see an embedded annulus in $W$ bounded by $P$ and $P_{\alpha,\beta}(L,-L)$.  

It remains only to check that $W$ is a homology cobordism. A Mayer-Vietoris argument reveals that $H_0(M\times[0,1]-A)\cong H_3(M\times[0,1]-A)\cong \Z$, $H_1(M\times[0,1]-A)\cong \Z^k$ is generated by the meridians $m(\alpha_1),\dots,m(\alpha_k)$ and all other homology groups vanish.  In $W$, the meridians are identified to the nullhomologous longitudes of $L_1,\dots,L_k$.  From here another Mayer-Vietoris reveals that $H_0(W)\cong H_3(W)\cong \Z$ while the other homology groups vanish.  Thus, $W$ is a homology cobordism in which $P\times[0,1]$ is a homology concordance. \end{proof}

\section{String link infection and the Sato-Levine invariant:  The proof of Theorem~\ref{main thm genus 2}}\label{sect: Sato Levine}


There is an invariant of $2$-component links with vanishing pairwise linking number called the \textbf{Sato-Levine invariant} \cite{Sato84}.  As in \cite{Cochran85, Cochran90} this invariant agrees with Milnor's invariant $\overline\mu_{iijj}(L)$.  

\begin{definition}
Let $L=L_1,L_2$ be a $2$-component link.  There exist Seifert surfaces $F_1$ and $F_2$ for $L_1$ and $L_2$ which intersect transversely  in a collection of disjoint oriented simple closed curves $a_1,\dots,a_n$.  Let $a_j^+$ be the result of pushing $a_j$ in the positive normal direction off of $F_1$.  The Sato-Levine invariant of $L$ is given by 
$$
\overline\mu_{1122}(L) = \Sum_{1\le i,j \le n}\lnk(a_i, a_j^+).
$$
By modifying $F_1$ and $F_2$, we can assume that $F_1\cap F_2$ consists of a single simple closed curve, $a$.  In this case $  \overline\mu_{1122}(L) = \lnk(a, a^+)$.
\end{definition}

In \cite{Martin15} the second author completely determines when a link is $0$-solvable.  Specializing this result to the setting of $2$-component links gives the following result
\begin{proposition*}[Theorem 1 of \cite{Martin15}]
Let $L = L_1,L_2$ be a $2$-component link.  Then $L$ is $0$-solvable if and only if 
\begin{enumerate}
\item $\lnk(L_1,L_2) = 0$, 
\item $\Arf(L_1) = \Arf(L_2) = 0$ and 
\item $\overline{\mu}_{1122}(L) \equiv 0 \mod2$.
\end{enumerate}
\end{proposition*}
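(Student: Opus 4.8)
The plan is to use the simplification that for $n=0$ the derived-series condition (3) in the definition is vacuous, since $\pi_1(W)^{(0)}=\pi_1(W)$. Thus a $0$-solution for $L$ is simply a compact oriented $4$-manifold $W$ with $\bdry W=M(L)$ such that $H_1(M(L))\to H_1(W)$ is an isomorphism onto $\Z^2$ and $H_2(W)$ carries a hyperbolic intersection form with a basis of embedded surfaces having geometric duals. I would prove the two implications separately, handling the three invariants in increasing order of difficulty.

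For necessity, suppose such a $W$ exists. The vanishing of $\lnk(L_1,L_2)$ is forced immediately: the $0$-surgery presents $H_1(M(L))$ by the linking matrix $\left(\begin{smallmatrix}0&\ell\\ \ell&0\end{smallmatrix}\right)$, and this cokernel is free of rank $2$ only when $\ell=\lnk(L_1,L_2)=0$. To see $\Arf(L_i)=0$, I would produce from $W$ a $0$-solution for the single component $L_1$ viewed as a knot: attaching to $W$ a $2$-handle along the curve that undoes the $0$-surgery on $L_2$ yields $W'$ with $\bdry W'=M(L_1)$, and since the attaching curve represents the primitive class $\mu_2$, the map $H_1(M(L_1))\to H_1(W')\cong\Z$ is again an isomorphism and no new $H_2$ is created, so the hyperbolic geometric basis survives. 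As a knot is $0$-solvable if and only if its Arf invariant vanishes (the identification $\F_0=\{\Arf=0\}$ recalled in the introduction), this gives $\Arf(L_1)=0$, and symmetrically $\Arf(L_2)=0$.

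The heart of the argument is that $\overline\mu_{1122}(L)\bmod 2$ is an obstruction, and this is the step I expect to be the main obstacle. Here I would realize the mod $2$ Sato-Levine invariant as a $\Z/2$-valued secondary obstruction detectable in any filling of $M(L)$ satisfying the $H_1$-condition. Following the standard computation of Milnor-type invariants four-dimensionally, one chooses $f\colon W\to T^2$ realizing the $H_1$-isomorphism onto $\Z^2$, takes surfaces in $W$ dual to $f^*$ of the two generators of $H^1(T^2)$, and identifies the intersection curve $a=F_1\cap F_2$ of the Seifert surfaces—whose self-linking $\lnk(a,a^+)$ computes $\overline\mu_{1122}$—with a framed self-intersection count among these dual surfaces; the embedded hyperbolic basis of $H_2(W)$ then forces that count to be even. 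The subtle point, and the reason only the parity survives, is that a $0$-solution constrains neither $\pi_1$ nor the genus of its surfaces, so the integral self-linking can be altered by handle additions (full twists) while its mod $2$ reduction cannot. Showing that the extracted $\Z/2$ quantity is genuinely independent of the filling $W$, so that its value as $\overline\mu_{1122}\bmod 2$ and its vanishing on a $0$-solution refer to the same number, is the crux.

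For sufficiency, assume $\lnk(L_1,L_2)=0$, $\Arf(L_1)=\Arf(L_2)=0$, and $\overline\mu_{1122}(L)$ even. I would show that $L$ cobounds with the two-component unlink $U$ a cobordism of $0$-surgeries that is itself a $0$-solution; gluing this to the evident $0$-solution for $U$—the boundary connected sum of two copies of $S^1\times B^3$, whose boundary is $(S^1\times S^2)\#(S^1\times S^2)=M(U)$ and which has $H_1\cong\Z^2$, $H_2=0$—then produces a $0$-solution for $L$. To build this equivalence I would reduce $L$ to $U$ through moves each realized by such a cobordism: connect-summing each component with a knot to trivialize its local knot type is harmless because Arf-zero knots are themselves $0$-solvable; band moves realizing $0$-solvable cobordisms unlink the components once their linking is trivial; and the vanishing of $\overline\mu_{1122}$ modulo $2$ is exactly what permits cancelling the remaining clasps in pairs. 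The governing model is the Whitehead link, which has $\overline\mu_{1122}=\pm1$ and is not $0$-solvable, while two oppositely clasped copies contribute $\overline\mu_{1122}\equiv0$ and can be removed—so again the parity of the Sato-Levine invariant is precisely the enabling condition.
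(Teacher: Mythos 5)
First, a point of orientation: the paper never proves this statement. It is quoted as Theorem 1 of \cite{Martin15} and used as a black box, so there is no internal proof to compare yours against; I can only judge your argument on its own merits. The parts you actually prove are fine. The observation that the derived-series condition is vacuous when $n=0$ is correct; the linking-number argument is correct (and in fact redundant, since the paper's definition of $n$-solvability already requires vanishing pairwise linking numbers); and your Arf argument is a genuine proof: attaching a $2$-handle to $W$ along the meridian $\mu_2$ gives $W'$ with $\bdry W' = M(L_1)$, the long exact sequence of the pair $(W',W)$ shows $H_1(W')\cong \Z$ and $H_2(W)\to H_2(W')$ is an isomorphism carrying the geometric hyperbolic basis along, and the identification $\F_0=\{\Arf=0\}$ for knots finishes it.

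The two remaining steps, which are the entire content of Martin's theorem, are not proven. For necessity of (3) you explicitly punt: the proposed scheme (surfaces in $W$ dual to a map $W\to T^2$, a framed self-intersection count, and the assertion that "the embedded hyperbolic basis of $H_2(W)$ then forces that count to be even") is never carried out. You do not relate $\lnk(a,a^+)\bmod 2$ to the intersection form of $W$, do not explain what role geometric duality and trivial normal bundles play, and do not show the extracted quantity is independent of choices; calling this "the crux" is accurate, and it is absent. For sufficiency, every move in your reduction of $L$ to the unlink is asserted, not shown, to be realized by a $0$-solvable cobordism. Worse, the first move is impossible as stated: no connected sum can "trivialize the local knot type" of a component, because $K\# K'$ is unknotted only when both $K$ and $K'$ are (Seifert genus is additive under $\#$). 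What is actually needed there is something like Kauffman's theorem that an Arf-zero knot can be unknotted by pass moves, together with a proof that (band-)pass moves preserve the $0$-solve equivalence class, or alternatively a concordance argument via the slice knot $L_i\#(-L_i)$ pushed into a collar. Finally, the claim that evenness of $\overline\mu_{1122}$ "is exactly what permits cancelling the remaining clasps in pairs" is a restatement of the theorem, not an argument for it. As written, your proposal establishes only the necessity of conditions (1) and (2).
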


\begin{corollary}\label{genus 2 step 1}
Suppose that $K$ is algebraically slice and bounds a genus $2$ Seifert surface $F$ on which lies a derivative $L=L_1,L_1$.  
  If $\overline{\mu}_{1122}(L) \equiv 0 \mod 2$, $K$ is $1$-solvable.
\end{corollary}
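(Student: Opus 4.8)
The plan is to manufacture, within the homology concordance class of $K$, a genus $2$ Seifert surface carrying a derivative that is $0$-solvable \emph{as a link}, and then to raise solvability by one via Theorem 8.9 of \cite{COT03}. By Theorem 1 of \cite{Martin15}, a $2$-component link is $0$-solvable precisely when its pairwise linking number, the Arf invariants of its two components, and its Sato-Levine invariant modulo $2$ all vanish. Since a derivative automatically has vanishing pairwise linking number (its two components are disjoint curves on $F$, so that $L_2^+$ is isotopic to $L_2$ away from $L_1$ and $\lnk(L_1,L_2)=\lnk(L_1,L_2^+)=0$) and we are given $\overline{\mu}_{1122}(L)\equiv 0 \pmod 2$, the only genuine obstruction to $0$-solvability of the derivative comes from the Arf invariants of the individual components.

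To remove that obstruction I would invoke Proposition~\ref{main thm genus 1'} with $g=2$: it replaces $K$, up to homology concordance, by a knot $K'$ bounding a genus $2$ Seifert surface on which there lies a derivative $L'=L_1',L_2'$ with $\Arf(L_1')=\Arf(L_2')=0$. The essential point is that the modification effecting this replacement is the generalized satellite operation, which, as observed at the start of Section~\ref{sect:modification lemma}, does not change the Sato-Levine invariant. Hence $\overline{\mu}_{1122}(L')\equiv \overline{\mu}_{1122}(L)\equiv 0 \pmod 2$, and $L'$ now satisfies all three conditions of the $0$-solvability criterion. Thus $L'$ is a $0$-solvable derivative for $K'$.

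Finally, Theorem 8.9 of \cite{COT03} upgrades the $0$-solvability of the derivative $L'$ to the $1$-solvability of $K'$, and since $1$-solvability is an invariant of homology concordance (Subsection~\ref{sect:filtration}) and $K$ is homology concordant to $K'$, the knot $K$ is $1$-solvable. I expect the only delicate point to be the claim that killing the component Arf invariants preserves the parity of the Sato-Levine invariant: the satellite operation alters the knot types of the two components, and one must be sure that this purely local knotting cannot perturb the second-order linking measured by $\overline{\mu}_{1122}$. Everything else is a matter of assembling results already in hand.
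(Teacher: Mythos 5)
Your proposal is correct and follows essentially the same route as the paper's own proof: apply Proposition~\ref{main thm genus 1'} to kill the component Arf invariants, note that the generalized satellite operation preserves the Sato-Levine invariant, conclude $0$-solvability of the new derivative via \cite[Theorem 1]{Martin15}, and finish with \cite[Theorem 8.9]{COT03} together with homology concordance invariance of $1$-solvability. The "delicate point" you flag is exactly the point the paper addresses (by studying the bounded Seifert surfaces), so there is no gap.
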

\begin{proof}
As in Proposition~\ref{main thm genus 1'}, we see that there is a knot $K'$ which is homology concordant to $K$ with derivative $L'=L_1',L_2'$ such that $\Arf(L_1')=\Arf(L_2')=0$.  The derivative $L'$ is given by modifying $L$ by a generalized satellite operation.  It is straightforward to see (by studying bounded Seifert surfaces) that this does not change the Sato-Levine invariant.  Thus, $\overline \mu_{1122}(L') = \overline\mu_{1122}(L)\equiv 0 \mod 2$.  

By \cite[Theorem 1]{Martin15}, $L'$ is $0$-solvable, so that by \cite[Propositon 8.9]{COT03} $K'$ is $1$-solvable.  Since $K$ is homology concordant to $K'$, $K$ is $1$-solvable. \end{proof}


In order to complete the proof of Theorem~\ref{main thm genus 2} we only need to find a means to replace the knot $K$ with a new genus $2$ knot $K'$ admitting a derivative $L'$ with even Sato-Levine invariant.  With this goal in mind we determine how generalized string link infection affects the Sato-Levine invariant. 

For any string link $J$, $\widehat{J}$ is the result of closing up $J$ to a link in $S^3$.  See Figure~\ref{fig: closure}.

\begin{figure}[h]
\setlength{\unitlength}{1pt}
\begin{picture}(0,160)
\put(-150,20){\includegraphics[height=.20\textheight]{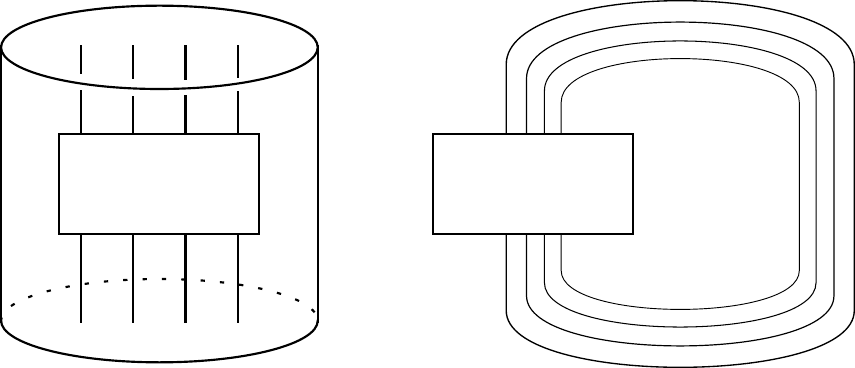}}
\put(-97,0){$J$}
\put(-97,80){$J$}
\put(88,0){$\widehat{J}$}
\put(35,80){$J$}
\end{picture}
\caption{A string link $J$ and its closure $\widehat{J}$ in $S^3$.}\label{fig: closure}
\end{figure}

\begin{proposition}\label{prop:infection and SL}
Let $L = L_1, L_2$ be a $2$-component link.  Let $\alpha = \alpha_1\vee \alpha_2$ be a wedge of $2$ circles in the complement of $L$.  Let $J$ be a $2$-component string link with zero linking number.  Let $C = (c_{i,j})$ be the $2\times 2$ matrix given by $c_{i,j} = \lnk(L_i, \alpha_j)$.  Then 
$$
\begin{array}{rcl}\overline\mu_{1122}(L_\alpha(J)) &=& \overline\mu_{1122}(L) + (c_{1,1}c_{2,2}-c_{1,2}c_{2,1})^2\overline\mu_{1122}(\widehat{J}) 
\\&=& \overline\mu_{1122}(L) + \det(C)^2\overline\mu_{1122}(\widehat{J})\end{array}$$
\end{proposition}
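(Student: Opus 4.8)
The plan is to follow the intersection curve of a pair of Seifert surfaces through the infection, and to see exactly where the $\det(C)^2$ factor comes from. I will use the reformulation that $\overline\mu_{1122}(L) = \lnk(c,c^+)$, where $c$ denotes the oriented $1$-cycle $F_1\cap F_2$ and $c^+$ its pushoff in the positive normal direction of $F_1$; this agrees with the stated definition when $F_1\cap F_2$ is a single curve and is its natural extension to the multi-curve case. (Since $J$ has vanishing linking numbers the infection preserves pairwise linking, so $\overline\mu_{1122}(L_\alpha(J))$ is defined.)

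First I would normalize the Seifert surfaces $F_1, F_2$ for $L_1, L_2$. Because $c_{i,j}=\lnk(L_i,\alpha_j)$ is the algebraic intersection of $F_i$ with $\alpha_j$, after tubing away cancelling pairs I can arrange that $F_i$ meets the solid-torus neighborhood $n(\alpha_j)$ in exactly $c_{i,j}$ coherently oriented meridian disks, and moreover that the disks coming from $F_1$ are disjoint from those coming from $F_2$ inside $n(\alpha)$. With this arrangement the intersection cycle $c=F_1\cap F_2$ is disjoint from $n(\alpha)$, so it survives unchanged into $M_\alpha(J)$ and continues to compute $\overline\mu_{1122}(L)$ in the region $M-n(\alpha)$.

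Next I would cap these surfaces inside the infection region. The boundary $F_i\cap\partial n(\alpha_j)$ consists of $c_{i,j}$ copies of the meridian $m_j(\alpha)$, which after infection is glued to the $0$-framed longitude $\lambda_j(J)$. Since $J$ has vanishing linking numbers, $\lambda_j(J)$ is nullhomologous in $E(J)$ and bounds a surface $\Sigma_j\subseteq E(J)$. I cap off $F_i\cap(M-n(\alpha))$ by the surface $G_i$ built from $c_{i,1}$ parallel copies of $\Sigma_1$ and $c_{i,2}$ parallel copies of $\Sigma_2$, obtaining a Seifert surface $\widetilde F_i=(F_i\cap(M-n(\alpha)))\cup G_i$ for the $i$-th component of $L_\alpha(J)$. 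The new intersection cycle is $\widetilde c = c \sqcup (G_1\cap G_2)$. Parallel copies of a single $\Sigma_k$ are disjoint, so $G_1\cap G_2$ comes entirely from the cross terms $\Sigma_1\cap\Sigma_2$; writing $b=\Sigma_1\cap\Sigma_2$ and tracking orientations so that $\Sigma_2\cap\Sigma_1$ counts as $-b$, these contributions assemble to $c_{1,1}c_{2,2}$ copies of $+b$ and $c_{1,2}c_{2,1}$ copies of $-b$, i.e.\ a net cycle $(c_{1,1}c_{2,2}-c_{1,2}c_{2,1})\,b=\det(C)\,b$. Here $b$ is precisely the intersection curve computing $\overline\mu_{1122}(\widehat J)$ (the $\Sigma_k$ extend across the closure region to Seifert surfaces for $\widehat J_1,\widehat J_2$, and the $\Sigma_1$-framing on $b$ is the Sato--Levine framing), so $\lnk(b,b^+)=\overline\mu_{1122}(\widehat J)$.

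Finally I compute $\overline\mu_{1122}(L_\alpha(J)) = \lnk(\widetilde c,\widetilde c^+)$. Expanding bilinearly gives $\lnk(\widetilde c,\widetilde c^+) = \lnk(c,c^+) + \det(C)^2\lnk(b,b^+) + \det(C)\big(\lnk(c,b^+)+\lnk(b,c^+)\big)$; the quadratic coefficient $\det(C)^2$ appears because the self-linking of a family of mutually parallel copies of $b$ with signs $\epsilon_i$ equals $(\sum_i\epsilon_i)^2\lnk(b,b^+)=(c_{1,1}c_{2,2}-c_{1,2}c_{2,1})^2\lnk(b,b^+)$, independent of geometric cancellation. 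The first two terms are exactly $\overline\mu_{1122}(L)$ and $\det(C)^2\overline\mu_{1122}(\widehat J)$, so the theorem reduces to the vanishing of the cross terms, and this is the main obstacle. I would handle it by showing $b$ is nullhomologous in $E(J)$: its linking numbers with both strands of $J$ vanish since the normal pushoff $b^+$ is disjoint from each of $\Sigma_1$ and $\Sigma_2$, and $H_1(E(J))\cong\Z^2$ is detected by these two numbers. Hence $b^+$ bounds a $2$-chain $\Delta\subseteq E(J)$; as $c$ lies in the complementary region $M-n(\alpha)$, it can be made disjoint from $\Delta$, giving $\lnk(c,b^+)=c\cdot\Delta=0$, and symmetrically for the other cross term. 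This yields the stated formula.
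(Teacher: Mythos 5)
Your construction follows the paper's proof almost step for step: normalize $F_1,F_2$ to meet $n(\alpha)$ in coherently oriented meridian disks, cap off with parallel copies of surfaces $\Sigma_1,\Sigma_2\subseteq E(J)$ bounded by the longitudes of $J$ (the paper's $E_1,E_2$), and count the resulting signed copies of $b=\Sigma_1\cap\Sigma_2$ (the paper's $d$) to extract the coefficient $(c_{1,1}c_{2,2}-c_{1,2}c_{2,1})^2$. In one respect you are actually more careful than the paper: the published proof lists the components of $F_1'\cap F_2'$ and sums only the self-linkings among the copies of $d$ and the term $\lnk(a,a^+)$, never justifying why the cross terms $\lnk(a,d_i^+)$ and $\lnk(d_i,a^+)$ vanish. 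Your homological argument --- the pushoff of $b$ misses $\Sigma_1$ and $\Sigma_2$, hence has zero winding number about both strands, hence bounds a $2$-chain inside $E(J)$ which can be kept disjoint from $c\subseteq M-n(\alpha)$ --- supplies exactly the justification the paper leaves implicit.

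There is, however, one genuine gap, and it is precisely the step on which the paper spends its smoothing figure. When $c_{i,1}$ and $c_{i,2}$ are both nonzero, your capped surface $\widetilde F_i=(F_i\cap(M-n(\alpha)))\cup G_i$ is not embedded: the $c_{i,1}$ copies of $\Sigma_1$ and the $c_{i,2}$ copies of $\Sigma_2$ lying in the \emph{same} $G_i$ intersect one another in $c_{i,1}c_{i,2}$ further copies of $b$. So $\widetilde F_i$ is only an immersed surface, not a Seifert surface, and the identity $\overline\mu_{1122}(L_\alpha(J))=\lnk(\widetilde c,\widetilde c^+)$ is not yet justified, since the Sato--Levine invariant is defined via embedded Seifert surfaces. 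The paper repairs this by orientedly resolving (smoothing) each self-intersection curve of the immersed surface, producing an embedded Seifert surface with the same boundary. The repair costs nothing in your computation: all copies of $\Sigma_1$ are mutually disjoint, as are all copies of $\Sigma_2$, so the double curves of $\widetilde F_1$ and of $\widetilde F_2$ are parallel copies of $b$ disjoint from the curves making up $\widetilde F_1\cap\widetilde F_2$, and the resolution, supported in a small neighborhood of those double curves, leaves $\widetilde c$ and every linking number in your expansion unchanged. Adding this resolution step makes your argument complete.
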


For any integer $x$, $x^2\equiv x \mod 2$ so we get the following immediate consequence of Proposition \ref{prop:infection and SL}.  

\begin{corollary}\label{cor: infection and SL}
Let $L = L_1, L_2$ be a $2$-component link.  Let $\alpha = \alpha_1\vee \alpha_2$ be a wedge of two circles in the complement of $L$.  Let $J$ be a $2$-component string link with zero linking number.  Let $C = (c_{i,j})$ be the $2\times 2$ matrix given by $c_{i,j} = \lnk(L_i, \alpha_j)$.  Then 
$$
\overline\mu_{1122}(L_\alpha(J)) \equiv \overline\mu_{1122}(L) + \det(C)\overline\mu_{1122}(\widehat{J})  \mod  2.$$
\end{corollary}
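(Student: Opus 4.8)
The plan is to prove Proposition~\ref{prop:infection and SL} directly, computing each Sato--Levine invariant as the self-linking of an intersection curve of Seifert surfaces and tracking how infection along $\alpha$ alters that curve; Corollary~\ref{cor: infection and SL} is then immediate, being the reduction of the displayed formula modulo $2$ via $\det(C)^2\equiv\det(C)\pmod 2$. First I would record that $L_\alpha(J)$ again has vanishing pairwise linking number: the change in $\lnk(L_1,L_2)$ under infection is $\sum_{j,k}c_{1,j}c_{2,k}\lnk(J_j,J_k)$, which vanishes since $J$ has zero linking numbers, so $\overline\mu_{1122}(L_\alpha(J))$ is defined.

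Next I would fix Seifert surfaces $F_1,F_2$ for $L_1,L_2$, arranged to meet transversely in a single curve $a$ with $\overline\mu_{1122}(L)=\lnk(a,a^+)$, and made transverse to the wedge $\alpha$; the algebraic intersection number of $F_i$ with $\alpha_j$ is exactly $c_{i,j}=\lnk(L_i,\alpha_j)$. Near $\alpha_j$ the surface $F_i$ consists of meridian disks of $n(\alpha_j)$, each bounded by $m_j(\alpha)$. In the infected manifold $M_\alpha(J)$ the meridian $m_j(\alpha)$ is glued to $\lambda_j(J)$, which is nullhomologous in $E(J)$ because $J$ has zero linking; hence $\lambda_j(J)$ bounds a Seifert surface $G_j$ for $J_j$ there. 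Capping each meridian disk off with a parallel copy of $G_j$ produces Seifert surfaces $F_1',F_2'$ for $L_1,L_2$ in $M_\alpha(J)$. Inside the infection region $E(J)$, the surface $F_i'$ then consists of $c_{i,1}$ parallel copies of $G_1$ together with $c_{i,2}$ parallel copies of $G_2$, while outside this region $F_i'$ agrees with $F_i$.

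I would then read off the new intersection cycle $c'=F_1'\cap F_2'$. Outside $E(J)$ it is the old curve $a$. Inside $E(J)$, parallel copies of the same $G_j$ contributed by $F_1'$ and $F_2'$ are disjoint, so only mixed intersections survive: the copies of $G_1$ in $F_1'$ meet the copies of $G_2$ in $F_2'$ in $c_{1,1}c_{2,2}$ parallel copies of $b:=G_1\cap G_2$, while the copies of $G_2$ in $F_1'$ meet the copies of $G_1$ in $F_2'$ in $c_{1,2}c_{2,1}$ copies of $b$ carrying the opposite orientation (swapping which surface plays the role of $F_1'$ reverses the induced orientation of the intersection). Thus inside $E(J)$ the cycle $c'$ is $(c_{1,1}c_{2,2}-c_{1,2}c_{2,1})=\det(C)$ parallel copies of $b$, and, recalling that $\overline\mu_{1122}(\widehat J)=\lnk(b,b^+)$, I would compute
\[
\overline\mu_{1122}(L_\alpha(J))=\lnk(c',(c')^+)=\lnk(a,a^+)+\det(C)^2\lnk(b,b^+)+\det(C)\bigl(\lnk(a,b^+)+\lnk(b,a^+)\bigr).
\]

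The main obstacle is twofold, and both parts live in this last line. First, the sign bookkeeping that collapses the two mixed intersection families into the single factor $\det(C)$ must be carried out with a fixed orientation convention for intersections of oriented surfaces. Second, and more substantively, I must show the cross terms $\lnk(a,b^+)+\lnk(b,a^+)$ vanish: here $a$ lies in the exterior $M-n(\alpha)$ while every copy of $b$ is confined to the glued-in region $E(J)$ and is nullhomologous there, being the intersection of two Seifert surfaces of a zero-linking string link. Confirming that this confinement and nullhomology force the mutual linking numbers to vanish in $M_\alpha(J)$ is the crux; granting it, the formula of Proposition~\ref{prop:infection and SL} follows, and reducing modulo $2$ gives Corollary~\ref{cor: infection and SL}.
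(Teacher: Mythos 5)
Your strategy is the paper's own: prove the integral formula of Proposition~\ref{prop:infection and SL} by capping the punctured Seifert surfaces with parallel copies of Seifert surfaces for the string link components, count signed parallel copies of the intersection curve, and reduce modulo $2$ using $\det(C)^2\equiv\det(C)$. But as written there is a genuine gap in the construction. The surfaces $F_1',F_2'$ you produce are \emph{not embedded}: inside $E(J)$, the $c_{i,1}$ copies of $G_1$ and the $c_{i,2}$ copies of $G_2$ that belong to the \emph{same} surface $F_i'$ intersect one another in $c_{i,1}c_{i,2}$ parallel copies of $b$. Your argument only examines intersections between copies of $G_j$ lying on different surfaces (``parallel copies of the same $G_j$ contributed by $F_1'$ and $F_2'$ are disjoint''), so it never sees these self-intersection curves. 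An immersed surface is not a Seifert surface, and $\overline\mu_{1122}$ is defined via embedded ones, so reading the invariant off $F_1',F_2'$ is not yet legitimate. The paper handles exactly this point: it smooths the double curves of each $F_i'$ (Figure~\ref{fig: smoothing}) to obtain honest Seifert surfaces, and since the smoothing takes place in small neighborhoods of those double curves, which may be chosen disjoint from the other surface (all relevant curves being disjoint parallel copies of $b$), the intersection $F_1'\cap F_2'$ and hence the count is unaffected. You need this step, or some substitute for it.

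The second soft spot is one you flag yourself and then leave open: the vanishing of the cross terms $\lnk(a,b^+)+\lnk(b,a^+)$. Your stated reason is the right one, and it closes in a few lines, so close it: each copy of $b$ and its pushoff lies in the interior of $E(J)$ and is homologous in $E(J)$ to its pushoff off $G_1$ (resp.\ $G_2$), which is disjoint from $G_1$ (resp.\ $G_2$); hence $[b]\cdot[G_1]=[b]\cdot[G_2]=0$. Since $H_1(E(J))\cong\Z^2$ is generated by the meridians, which pair dually with the classes of $G_1,G_2$ in $H_2(E(J),\bdry E(J))$, these intersection numbers are the coordinates of $[b]$, so $[b]=0$ and $b$ bounds a $2$-chain inside $E(J)$. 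That chain is disjoint from $a$ and $a^+$, which lie in $M-n(\alpha)$, so both linking numbers vanish. (The paper's proof is silent on this point and implicitly assumes it in its ``adding these up'' step, so here you are actually being more careful than the source.) With these two repairs your argument coincides with the paper's proof of Proposition~\ref{prop:infection and SL}, and Corollary~\ref{cor: infection and SL} follows exactly as you say.
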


\begin{proof}[Proof of Proposition~\ref{prop:infection and SL}]
Let $F_1$ and $F_2$ be Seifert surfaces for $L_1$ and $L_2$ respectively such that $F_i$ intersects $\alpha_j$ transversely in $c_{i,j}$ points with positive orientation. (Or $-c_{i,j}$ points with negative orientation if $c_{i,j}<0$.) Isotoping slightly if needed we can assume that these points of intersection are disjoint from the intersection curve $a=F_1\cap F_2$.  

 Let $E_1$ and $E_2$ be surfaces bounded by the longitudes of $J_1$ and $J_2$ respectively.   We assume that $E_1\cap E_2$ is a simple closed curve.   Let $d$ be this curve so $\lnk(d,d^+) = \overline{\mu}_{1122}(\widehat{J})$.

Let $i=1$ or $2$.  In order to build a Seifert surface for the $i$'th component of $L_\alpha(J)$, start with $F_i$, cut out the $c_{i,1}$ points of intersection with $\alpha_1$ and glue in $c_{i,1}$ parallel copies of $E_1$.  Go on to cut out the $c_{i,2}$ points of intersection with $\alpha_2$ and glue in $c_{i,2}$ parallel copies of $E_2$.  Since $E_1$ and $E_2$ intersect in the curve $d$ we see that this new surface is no longer embedded, but crosses itself  in $c_{i,1} c_{i,2}$ parallel copies of $d$.  We can smooth these intersection curves as in Figure~\ref{fig: smoothing} to get a Seifert surface $F_i'$.

\begin{figure}[h]
\setlength{\unitlength}{1pt}
\begin{picture}(0,145)
\put(-180,10){\includegraphics[height=.20\textheight]{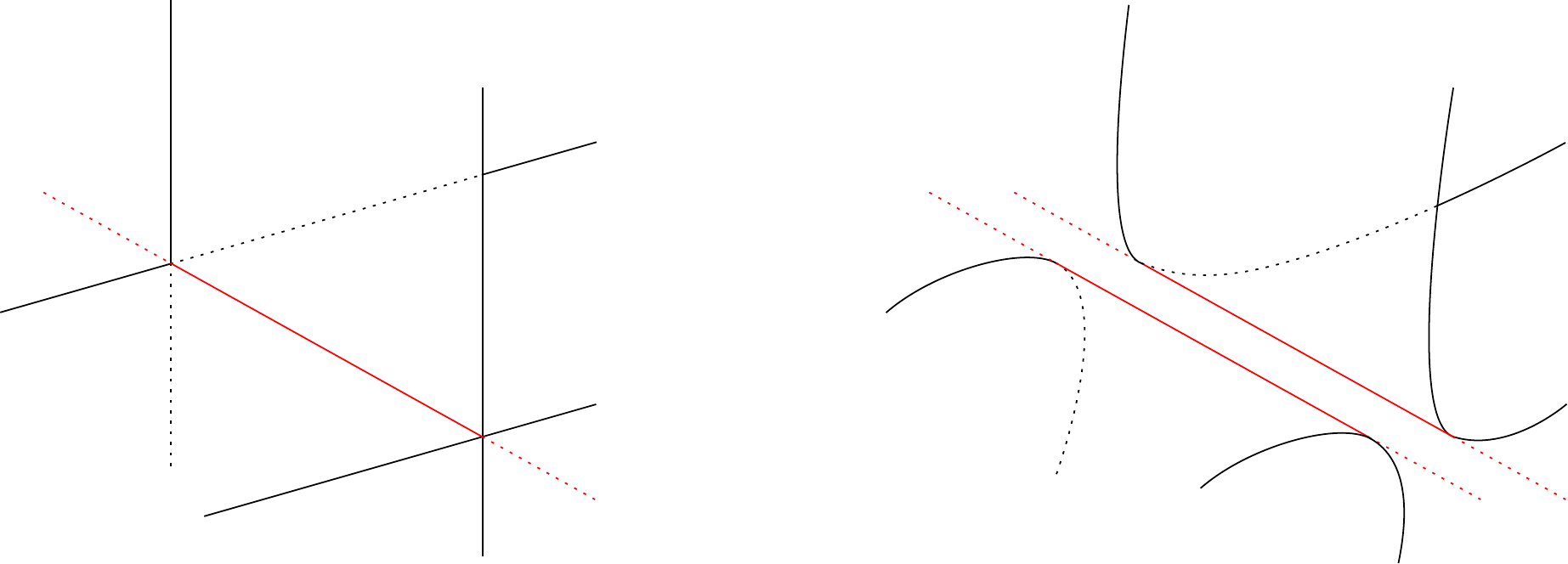}}
\put(-16,65){$\longrightarrow$}
\end{picture}
\caption{Smoothing self intersection curves to get a Seifert surface.}\label{fig: smoothing}
\end{figure}

The intersection $F_1'\cap F_2'$ consists of the original intersection curve $a$, $k = c_{1,1}c_{2,2}$  parallel copies of $d$ (call them $d_1,\dots,d_k$), and $\ell = c_{1,2}c_{2,1}$ parallel copies of $d$ with their orientation reversed  ($e_1,\dots,e_\ell$).  Since each of these parallel copies come from pushing $d$ off itself using the $E_1$ normal direction we see that $\lnk(d_i,d_j^+)=\lnk(e_i,e_j^+)=\lnk(d,d^+) = -\lnk(d_i,e_j^+) = \overline{\mu}_{1122}(\widehat{J})$.  
Adding these up we see
$$
\begin{array}{rcl}
\overline\mu_{1122}(L_\alpha(J)) &=& \overline\mu_{1122}(L) + (k^2+\ell^2 - 2k\ell)\overline\mu_{1122}(\widehat{J})
\\&=& \overline\mu_{1122}(L) + (k-\ell)^2\overline\mu_{1122}(\widehat{J})
\\&=& \overline\mu_{1122}(L) + (c_{1,1}c_{2,2}-c_{1,2}c_{2,1})^2\overline\mu_{1122}(\widehat{J})
\end{array}
$$
as we claimed. \end{proof}

Next, we prove the following proposition from which Theorem~\ref{main thm genus 2} follows.

\begin{proposition}\label{Change iijj}
Let $K$ be an algebraically slice knot with genus $2$ Seifert surface $F$ and derivative $L=L_1,L_2$.   Suppose that there exist disjoint curves $\alpha_1, \alpha_2$ on $F$  such that
$$
\det\begin{bmatrix}\lnk(L_1,\alpha_1^+) &\lnk(L_1,\alpha_2^+)\\
\lnk(L_2,\alpha_1^+) &\lnk(L_2,\alpha_2^+)\end{bmatrix}
-
\det\begin{bmatrix}\lnk(L_1,\alpha_1^-) &\lnk(L_1,\alpha_2^-)\\\lnk(L_2,\alpha_1^-) &\lnk(L_2,\alpha_2^-)\end{bmatrix}
\equiv
1 \mod 2.
$$
Then $K$ is $1$-solvable.  
\end{proposition}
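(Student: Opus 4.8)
The plan is to read the hypothesis as a parity statement about Sato-Levine invariants and then defer to Corollary~\ref{genus 2 step 1}, which already upgrades ``even Sato-Levine'' to ``$1$-solvable.'' Concretely, I want to produce a knot $K'$ that is homology concordant to $K$, still bounds a genus $2$ Seifert surface carrying a derivative $L'$, and has $\overline\mu_{1122}(L')$ even. Since $1$-solvability depends only on the homology concordance class, the $1$-solvability of $K'$ then transfers back to $K$.

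First I would set up the infection. The curves $\alpha_1,\alpha_2$ lie in the interior of $F$ and so are disjoint from $\partial F = K$; I join them by an arc on $F$ and push the resulting wedge off $F$ to each side, obtaining wedges $\alpha^+ = \alpha_1^+\vee\alpha_2^+$ and $\alpha^- = \alpha_1^-\vee\alpha_2^-$ with their surface framings. Sweeping a regular neighborhood of the on-surface wedge from the positive side of $F$ across to the negative side, while staying away from $K$, exhibits $\alpha^+$ and $\alpha^-$ as framed wedge cobordant in $(M-K)\times[0,1]$ in the sense of Definition~\ref{abstract wedge}; the passage across $F$ is what accounts for the orientation reversal of the meridians required there. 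Proposition~\ref{string link mod} then guarantees that for any $2$-component string link $J$ the infected knot $K' := K_{\alpha^+,\alpha^-}(J,-J)$ is homology concordant to $K$. Because $\alpha^+$ and $\alpha^-$ are pushed off $F$, the surface $F$ survives the infection as a genus $2$ Seifert surface for $K'$, carrying the infected derivative $L' = L_{\alpha^+,\alpha^-}(J,-J)$.

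Next I would compute $\overline\mu_{1122}(L')$ modulo $2$ by applying Corollary~\ref{cor: infection and SL} twice. Writing $C^\pm = (\lnk(L_i,\alpha_j^\pm))$, the first infection along $\alpha^+$ contributes $\det(C^+)\,\overline\mu_{1122}(\widehat J)$ and the second along $\alpha^-$ contributes $\det(C^-)\,\overline\mu_{1122}(\widehat{-J})$; since $\overline\mu_{1122}(\widehat{-J})\equiv\overline\mu_{1122}(\widehat J)\bmod 2$, this gives $\overline\mu_{1122}(L')\equiv\overline\mu_{1122}(L)+(\det(C^+)+\det(C^-))\,\overline\mu_{1122}(\widehat J)\bmod 2$. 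The hypothesis $\det(C^+)-\det(C^-)\equiv 1\bmod 2$ is precisely the statement that $\det(C^+)+\det(C^-)$ is odd, so the coefficient of $\overline\mu_{1122}(\widehat J)$ equals $1$, and I may choose $J$ with $\overline\mu_{1122}(\widehat J)\equiv\overline\mu_{1122}(L)\bmod 2$ (the trivial string link or a Whitehead string link, according to the parity of $\overline\mu_{1122}(L)$) to force $\overline\mu_{1122}(L')\equiv 0\bmod 2$. Corollary~\ref{genus 2 step 1} applied to $K'$ then completes the argument, since $K'$ is algebraically slice (it carries the derivative $L'$).

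The main obstacle I anticipate is the bookkeeping needed to confirm that $L'$ is genuinely a derivative and that the coefficient in the second application of Corollary~\ref{cor: infection and SL} is still $\det(C^-)$ with $C^- = (\lnk(L_i,\alpha_j^-))$, rather than some perturbed linking data. Both points rest on choosing $J$ with vanishing pairwise linking numbers, so that infection by $J$ (and by $-J$) preserves all first-order linking numbers among the curves $L_1,L_2,\alpha_1^\pm,\alpha_2^\pm$: this keeps the Seifert form of $F$ intact, hence preserves the vanishing block that makes $L'$ a derivative, and keeps $\lnk(L_i',\alpha_j^-) = \lnk(L_i,\alpha_j^-)$ after the first infection. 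I would justify these invariance statements carefully, either through the Seifert-surface model used in the proof of Proposition~\ref{prop:infection and SL} or by a direct homological computation of the linking numbers in the infected homology sphere, before invoking Corollary~\ref{genus 2 step 1}.
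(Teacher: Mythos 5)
Your proposal is correct and follows essentially the same route as the paper's proof: push the wedge $\alpha_1\vee\alpha_2$ off $F$ to get framed wedge cobordant copies, infect by $(J,-J)$ via Proposition~\ref{string link mod}, compute the change in $\overline\mu_{1122}$ mod $2$ by two applications of Corollary~\ref{cor: infection and SL}, choose $J$ so that $\overline\mu_{1122}(\widehat J)\equiv\overline\mu_{1122}(L)\bmod 2$, and finish with Corollary~\ref{genus 2 step 1}. The only cosmetic difference is that the paper fixes $\overline\mu_{1122}(\widehat J)=\overline\mu_{1122}(L)$ exactly and writes the conclusion as $(1+\det(A)-\det(B))\overline\mu_{1122}(L)\equiv 0\bmod 2$, which is the same computation as yours.
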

\begin{proof}

Isotoping $\alpha_1$ until it intersects $\alpha_2$ in a single point we have a wedge of circles $\alpha= \alpha_1\vee \alpha_2$. 
Pushing $\alpha$ off of $F$ we get $\alpha^+$ and $\alpha^-$.  Using the Seifert framed longitudes, we get framings of $\alpha^+$ and $\alpha^-$ satisfying the assumptions of Proposition~\ref{string link mod}.  Let $J$ be a $2$-component string link with pairwise linking number $0$ for which $\overline{\mu}_{1122}(\widehat J) = \overline{\mu}_{1122}(L)$.  Then $K' := K_{\alpha^+, \alpha^-}(J,-{J})$ is homology concordant to $K$ with derivative $L':=L_{\alpha^+, \alpha^-}(J,-{J})$.  

Let $A = \begin{bmatrix}\lnk(L_1,\delta_1^+) &\lnk(L_1,\delta_2^+)\\
\lnk(L_2,\delta_1^+) &\lnk(L_2,\delta_2^+)\end{bmatrix}$ and $B=\begin{bmatrix}\lnk(L_1,\delta_1^-) &\lnk(L_1,\delta_2^-)\\\lnk(L_2,\delta_1^-) &\lnk(L_2,\delta_2^-)\end{bmatrix}$.
By Corollary~\ref{cor: infection and SL},  
$$
\begin{array}{rcll}
\overline{\mu}_{1122}(L') &\equiv& \overline{\mu}_{1122}(L) + \det(A)\overline{\mu}_{1122}(\widehat J)+\det(B) \overline{\mu}_{1122}( - \widehat J) & \mod2

\\&\equiv&\overline{\mu}_{1122}(L) + \det(A)\overline{\mu}_{1122}(\widehat J)-\det(B) \overline{\mu}_{1122}(\widehat J) & \mod2

\\&\equiv & \left(1+\det(A) - \det(B)\right)\overline{\mu}_{1122}(L) & \mod2.
\\
\end{array}
$$
 Since we assume $\det(A)-\det(B)$ is odd, $\overline{\mu}_{1122}(L')$ is even.  By Corollary~\ref{genus 2 step 1}, $K'$ is $1$-solvable.  Since $K$ is homology concordant to $K'$, $K$ is $1$-solvable. \end{proof}

Finally we prove Theorem \ref{main thm genus 2}.  

\begin{theorem}\label{main thm genus 2}
Let $K$ be an algebraically slice knot with genus $2$ Seifert surface $F$ and derivative $L=L_1,L_2$.   Extend $L_1,L_2$ to a basis for $H_1(F)$.  Let $M=\begin{bmatrix}0&A\\B&C\end{bmatrix}$ be the resulting Seifert matrix.

If either $\overline{\mu}_{1122}(L)$ is even or $\det(A)-\det(B)$ is odd, then $K$ is 1-solvable.  
\end{theorem}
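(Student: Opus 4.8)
The plan is to prove the theorem by splitting into the two cases named in the hypothesis and invoking the two ingredients already established: Corollary~\ref{genus 2 step 1} and Proposition~\ref{Change iijj}. The first case is immediate. Suppose $\overline{\mu}_{1122}(L)$ is even. Then the hypotheses of Corollary~\ref{genus 2 step 1} hold for the genus $2$ derivative $L = L_1, L_2$, and that corollary concludes directly that $K$ is $1$-solvable, so nothing further is needed here.

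The substantive case is when $\det(A) - \det(B)$ is odd, where I must manufacture the \emph{disjoint} curves $\alpha_1, \alpha_2$ on $F$ required by Proposition~\ref{Change iijj}. The given completion $\{L_1, L_2, L_3, L_4\}$ need not consist of disjoint curves, so first I would replace it with a more geometric basis. Since $L = L_1, L_2$ is a derivative, the top-left block of $M$ vanishes, whence $[L_1]\cdot[L_2] = \lnk(L_1, L_2^+) - \lnk(L_2, L_1^+) = 0$ and each $[L_i]\cdot[L_i] = 0$; thus $[L_1],[L_2]$ span a Lagrangian direct summand of $(H_1(F),\cdot)$. Completing this summand to a symplectic basis over $\Z$ and realizing the dual classes geometrically, I obtain simple closed curves $\alpha_1, \alpha_2$ on $F$ such that $\{L_1, L_2, \alpha_1, \alpha_2\}$ is an integral basis of $H_1(F)$ with $L_i\cdot\alpha_j = \delta_{ij}$ and $\alpha_1\cdot\alpha_2 = 0$; it is precisely the last relation that permits taking $\alpha_1$ and $\alpha_2$ disjoint.

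Next I would match the two determinants appearing in Proposition~\ref{Change iijj} with the blocks of the Seifert matrix $M' = \begin{bmatrix} 0 & A' \\ B' & C' \end{bmatrix}$ taken in this new basis. The first matrix has $(i,j)$-entry $\lnk(L_i, \alpha_j^+) = A'_{ij}$, so its determinant is $\det(A')$. For the second matrix I would use the Seifert duality $\lnk(L_i, \alpha_j^-) = \lnk(\alpha_j, L_i^+) = B'_{ji}$, so that this matrix is $(B')^T$ and its determinant is $\det(B')$. Hence the left side of the congruence hypothesized in Proposition~\ref{Change iijj} is exactly $\det(A') - \det(B') \bmod 2$. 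Because $\{L_1, L_2, \alpha_1, \alpha_2\}$ and $\{L_1, L_2, L_3, L_4\}$ differ by a change of basis fixing the derivative, Remark~\ref{basis} gives $\det(A') - \det(B') = \pm(\det(A) - \det(B))$, which is odd by assumption. Thus the hypothesis of Proposition~\ref{Change iijj} is met, $K$ is $1$-solvable, and combining the two cases proves the theorem.

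I expect the main obstacle to be the geometric step of the second case: guaranteeing that the symplectic dual curves $\alpha_1, \alpha_2$ can be chosen simultaneously disjoint \emph{and} spanning a complement to the derivative, rather than merely a homological dual basis, since Proposition~\ref{Change iijj} genuinely requires disjoint curves on $F$ and the algebraic relation $\alpha_1\cdot\alpha_2 = 0$ does not by itself force disjoint representatives. Once those curves are in hand, the sign bookkeeping from Seifert duality and the parity invariance supplied by Remark~\ref{basis} are routine.
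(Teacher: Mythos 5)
Your proposal is correct and follows essentially the same route as the paper: the even $\overline{\mu}_{1122}(L)$ case via Corollary~\ref{genus 2 step 1}, and the odd $\det(A)-\det(B)$ case by choosing disjoint dual curves $\alpha_1,\alpha_2$ on $F$, identifying the two matrices in Proposition~\ref{Change iijj} with $A'$ and $(B')^T$ via Seifert duality, and invoking Remark~\ref{basis} for parity invariance under change of basis. Your extra care about geometrically realizing the disjoint dual curves is a point the paper glosses over (attributing it to Remark~\ref{basis}), but it is the standard fact that a derivative on a genus $2$ surface with boundary extends to a geometric symplectic basis, so the arguments coincide.
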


\begin{proof}[Proof of Theorem~\ref{main thm genus 2}]

Let $K$ be a genus 2 algebraically slice knot with Seifert surface $F$ and derivative $L=L_1,L_2$.  If $\overline{\mu}_{1122}(L)$ is even then Corollary~\ref{genus 2 step 1} applies and $K$ is $1$-solvable, completing the proof in this case.  

Otherwise, extend $L$ to a full basis $\{L_1, L_2, \alpha_1,\alpha_2\}$ for $H_1(F)$ to get Seifert matrix $\begin{bmatrix}0&A\\B&C\end{bmatrix}$ where $0$, $A$, $B$, and $C$ are all $2\times 2$ matrices. According to Remark~\ref{basis} we may choose $\alpha_1$ and $\alpha_2$ to be disjoint simple closed curves. By assumption $\det(A)-\det(B)=\det(A)-\det(B^T)$ is odd.  Since $A = \begin{bmatrix}\lnk(L_1,\alpha_1^+) &\lnk(L_1,\alpha_2^+)\\
\lnk(L_2,\alpha_1^+) &\lnk(L_2,\alpha_2^+)\end{bmatrix}$ 
and 
$B^T 
= 
\begin{bmatrix}\lnk(L_1,\alpha_1^-) &\lnk(L_1,\alpha_2^-)\\
\lnk(L_2,\alpha_1^-) & \lnk(L_2,\alpha_2^-)\end{bmatrix}$, Proposition~\ref{Change iijj} concludes that $K$ is $1$-solvable. \end{proof}
\section{Higher genus results.}\label{sect: triple linking}

In the case of links of $3$ or more components, one additional invariant is needed to understand $0$-solvability, \textbf{Milnor's triple linking number}, $\overline\mu_{123}$.  

\begin{definition} Let $L=L_1,L_2,L_3$ be a $3$-component link with vanishing pairwise linking number. There exists Seifert surfaces $F_1,F_2,F_3$ for $L_1,L_2,L_3$ with transverse intersections such that for all $i\neq j$, $F_i\cap L_j=\emptyset$. Then $\overline{\mu}_{123}(L)$, is the number of points in $F_1\cap F_2\cap F_3$, known as triple points, counted with sign.
\end{definition}

There is a complete obstruction theory due to the second author to a link being $0$-solvable.

\begin{proposition*}[Theorem 1 of \cite{Martin15}]
Let $L=L_1,\dots, L_n$ be an $n$-component link.  Then $L$ is $0$-solvable if and only if 
\begin{enumerate}
\item $\lnk(L_i,L_j) = 0$ for $i\neq j$,
\item $\Arf(L_i)= 0 \mod 2$ for all $i$
\item For every $2$-component sublink $\overline\mu_{1122}(L_i,L_j)\equiv 0 \mod 2$, and 
\item For every $3$-component sublink $\overline{\mu}_{123}(L_i, L_j, L_k)=0$.
\end{enumerate}
\end{proposition*}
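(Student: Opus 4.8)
The plan is to prove both directions of the equivalence, treating the four conditions as a complete set of invariants for the underlying $0$-solv equivalence relation (two links being $0$-solv equivalent when their zero-surgeries cobound a $4$-manifold satisfying the homological requirements of a $0$-solution, with the $\pi_1$-condition imposed at level $0$, i.e. that the relevant images lie in $\pi_1(W)^{(0)}=\pi_1(W)$, which is vacuous). The conceptual reason the list is so short — no Milnor invariants of length four or higher, and no finer Sato--Levine data — is exactly that $0$-solvability imposes only this empty group-theoretic condition, so only ``order $\le 1$'' Milnor-type data can possibly obstruct.

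For the necessity direction I would fix a $0$-solution $W$ for $L$ and read each invariant off of $W$. Condition (1) is essentially built into the definition: zero pairwise linking is what makes $\Z^n\cong H_1(M(L))\to H_1(W)$ an isomorphism in the first place. For (2), I would first establish the lemma that every sublink of a $0$-solvable link is again $0$-solvable (delete the unwanted components by surgering out the corresponding part of the solution), so that each component $L_i$ is $0$-solvable as a knot; the Cochran--Orr--Teichner identification of $\F_0$ with the Arf-zero knots then gives $\Arf(L_i)=0$. For (4) the cleanest route is via Massey products: by the work of Porter and Turaev, $\overline\mu_{123}(L_i,L_j,L_k)$ equals the triple Massey product $\langle x_i,x_j,x_k\rangle$ of the dual classes in $H^1(M(L))$; because the $H_1$-isomorphism lets these classes extend over $W$, and $H_2(W)$ carries only the hyperbolic Lagrangian-plus-dual pairing of vanishing signature, the product is forced to vanish. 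For (3) I would run the analogous argument with $\Z/2$ coefficients, identifying $\overline\mu_{1122}\bmod 2$ with the corresponding mod-$2$ secondary operation, which again dies in the hyperbolic $4$-manifold.

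For the sufficiency direction I would argue that when all four invariants vanish, $L$ can be carried to the $n$-component unlink by a finite sequence of local moves, each realized by an elementary cobordism that glues into a $0$-solution: delta moves to cancel triple linking numbers (these preserve pairwise linking), together with band-pass and clasp-pass type moves to cancel the component Arf invariants and the Sato--Levine classes. Since the unlink bounds a standard $0$-solution — a boundary-connected sum of copies of $S^1\times D^3$, stabilized by $S^2\times S^2$ summands to supply the required hyperbolic basis of embedded surfaces for $H_2$ — concatenating these elementary cobordisms with the standard solution produces a $0$-solution for $L$. I would organize the reduction by sublink size, first killing all $\overline\mu_{123}$ on $3$-component sublinks, then all $\overline\mu_{1122}\bmod 2$ on $2$-component sublinks, then the component Arf invariants, checking at each stage that the moves do not reintroduce an already-cancelled lower-order invariant.

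The hard part will be this sufficiency direction, and within it the simultaneous control of all four invariants. Each reduction move changes the link, and I must verify that the move used to cancel a triple linking number does not disturb a pairwise linking number or a Sato--Levine class, and that each elementary cobordism genuinely meets the homological requirements of a $0$-solution (the $H_1$-isomorphism and a hyperbolic basis of embedded surfaces for $H_2$) rather than merely some weaker cobordism. Establishing that the chosen generating moves realize \emph{exactly} these four invariants — and nothing finer — is where the real content lies; by contrast the necessity direction reduces to standard extension-over-the-cobordism arguments.
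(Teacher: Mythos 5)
The first thing to note is that the paper contains no proof of this statement at all: it is quoted verbatim as Theorem 1 of \cite{Martin15} and used as a black box, so there is no internal argument to compare yours against. Judged on its own terms, your outline has the right global shape (an obstruction direction read off a $0$-solution, and a realization direction via local moves glued into a standard solution for the unlink), and your sufficiency plan --- delta, band-pass and clasp-pass type moves, classified \`a la Taniyama--Yasuhara, each realized by an elementary cobordism --- is in fact the known route to this theorem. But two of your necessity arguments would fail as written, and the sufficiency direction is a plan rather than a proof.

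The concrete gaps are these. For condition (4), Massey-product naturality does not apply the way you use it: the product $\langle X_i,X_j,X_k\rangle$ in $W$ is only defined if $X_i\cup X_j$ and $X_j\cup X_k$ vanish in $H^2(W;\Z)$, and the hypotheses give vanishing only of their restrictions to $H^2(M(L);\Z)$; since $H^2(W)\to H^2(M(L))$ can have large kernel, the products upstairs may be nonzero and the Massey product in $W$ undefined. This step is fixable, and the fix shows (4) is actually the easy part: in the zero-surgery manifold (as opposed to the link complement) one has $\overline\mu_{123}(L)=\langle x_i\cup x_j\cup x_k,[M(L)]\rangle$, an honest triple cup product, so the classes extend over $W$ by the $H_1$-isomorphism and the evaluation vanishes because $[M(L)]=[\bdry W]$ dies in $H_3(W)$ --- no hyperbolic form is needed. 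For condition (3), however, there is no ``analogous argument with $\Z/2$ coefficients'': $\overline\mu_{1122} \mod 2$ is an Arf-type invariant, not a cup product, and it is not controlled by the homological shape of $W$ in the way you assert. Note that for $n=0$ the $\pi_1$-conditions in the definition are vacuous (as you yourself observe), so the only content of a $0$-solution beyond its homology is the basis of \emph{disjointly embedded} surfaces with trivial normal bundles; any correct proof of (2) and (3) must use that geometric data, either through a Freedman--Kirby/Rokhlin-type characteristic-surface congruence (this is how \cite{COT03} prove that $0$-solvable knots have vanishing Arf invariant), or by reducing $\overline\mu_{1122} \mod 2$ to Arf invariants of components and band-sums and proving that sublinks and fusions of $0$-solvable links are $0$-solvable. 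Your phrase ``dies in the hyperbolic $4$-manifold'' appeals only to the homological shell and so proves nothing here. Finally, in the sufficiency direction the two load-bearing claims --- that your moves are classified by exactly these four invariants and nothing finer, and that each move is realized by a cobordism meeting \emph{all} the conditions of a $0$-solution (the embedded disjoint surface basis, not merely the right homology) --- are deferred; that is precisely where the content of \cite{Martin15} lies, so as it stands the hard half of the theorem remains unproven.
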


We recall the effect of a string link infection in terms of the Milnor's triple linking number due to the fourth author.  It is stated and proved in \cite{Park16} for classical string link operators and not generalized string link operators, but the proof makes no reference to this assumption.  A similar result appears also in \cite{JKP}.

\begin{proposition*}[Lemma 3.1 of \cite{Park16}] Let $L=L_1,L_2,L_3$ be an oriented $3$-component link with pairwise linking numbers zero.  Let $J$ be a $3$-component string link such that $\widehat{J}$ has pairwise linking numbers zero. Let $\alpha = \alpha_1\vee \alpha_2 \vee \alpha_3$ be a wedge of circles in the complement of $L$ and  $C = (c_{i,j})$ be the $3\times 3$ matrix given by $c_{i,j} = \lnk(L_i, \alpha_j)$.  Then 
$$\overline{\mu}_{123}(L_\alpha(J)) = \overline{\mu}_{123}(L) + \det(C)\cdot\overline{\mu}_{123}(\widehat{J}).$$
\end{proposition*}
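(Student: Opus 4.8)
The plan is to mimic the proof of Proposition~\ref{prop:infection and SL}, now tracking signed triple points of a triple of Seifert surfaces rather than the self-linking of a single intersection curve. First I would choose Seifert surfaces $F_1,F_2,F_3$ for $L_1,L_2,L_3$ which meet transversely, satisfy $F_i\cap L_j=\emptyset$ for $i\neq j$, and realize $\overline{\mu}_{123}(L)$ as the signed count of points in $F_1\cap F_2\cap F_3$. After a small isotopy I may assume that $F_i$ meets $\alpha_j$ transversely in $|c_{i,j}|$ points, with sign the sign of $c_{i,j}$, and that all of these intersection points are disjoint from the triple-point locus of $F_1\cap F_2\cap F_3$. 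I would also fix Seifert surfaces $E_1,E_2,E_3$ for the $0$-framed longitudes of $J_1,J_2,J_3$, arranged so that the signed count of points in $E_1\cap E_2\cap E_3$ equals $\overline{\mu}_{123}(\widehat J)$.

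Next, exactly as in the Sato-Levine computation, I would build Seifert surfaces $F_1',F_2',F_3'$ for the components of $L_\alpha(J)$: take $F_i$, cut out each of its intersection points with $\alpha_j$, and glue in $|c_{i,j}|$ parallel copies of $E_j$ (oriented according to the sign of $c_{i,j}$), for $j=1,2,3$. Copies of distinct $E_j$'s inside a single $F_i'$ meet along curves, which I smooth as in Figure~\ref{fig: smoothing} to produce an embedded surface. This construction is local to the infection region, so outside of it each $F_i'$ agrees with $F_i$.

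Then I would count the points of $F_1'\cap F_2'\cap F_3'$. A triple point uses one sheet from each $F_i'$. Points lying outside the infection region are exactly the triple points of $F_1\cap F_2\cap F_3$ and contribute $\overline{\mu}_{123}(L)$. A triple point inside the infection region uses a copy of some $E_a$ from $F_1'$, a copy of $E_b$ from $F_2'$, and a copy of $E_c$ from $F_3'$; since parallel copies of the same $E_j$ are mutually disjoint, such a point can occur only when $\{a,b,c\}=\{1,2,3\}$, i.e. when $(a,b,c)=(\sigma(1),\sigma(2),\sigma(3))$ for some $\sigma\in S_3$. For a fixed $\sigma$ there are $c_{1,\sigma(1)}c_{2,\sigma(2)}c_{3,\sigma(3)}$ signed choices of copies, and each choice contributes a translate of the triple-point set of $E_1\cap E_2\cap E_3$ counted with sign $\operatorname{sgn}(\sigma)\,\overline{\mu}_{123}(\widehat J)$. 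Summing over $\sigma$ gives
\[
\sum_{\sigma\in S_3}\operatorname{sgn}(\sigma)\,c_{1,\sigma(1)}c_{2,\sigma(2)}c_{3,\sigma(3)}\,\overline{\mu}_{123}(\widehat J)=\det(C)\,\overline{\mu}_{123}(\widehat J),
\]
and adding the outside contribution yields the claimed formula.

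The main obstacle is the sign bookkeeping, in two places. First I must confirm that reordering the three oriented sheets $(E_a,E_b,E_c)$ changes the signed triple-point count by $\operatorname{sgn}(\sigma)$, and that the orientation of each glued copy (hence the sign carried by the factor $c_{i,j}$ rather than $|c_{i,j}|$) is dictated by the sign of the intersection $F_i\cap\alpha_j$; together these are precisely what convert the sum over $S_3$ into $\det(C)$. Second I must rule out spurious contributions: triple points in which two of the three sheets are parallel copies of the same $E_j$ (impossible, as such copies are disjoint), and \emph{mixed} triple points pairing an original sheet of one $F_i$ with a glued-in sheet of another (excluded because the original sheets and the glued-in sheets occupy disjoint regions, the smoothings of Figure~\ref{fig: smoothing} being supported in a neighborhood of the pairwise intersection curves and away from the triple-point locus of the $E_j$'s). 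Once these local separations are arranged, the count is purely combinatorial and the determinant appears automatically.
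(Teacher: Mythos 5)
The paper itself contains no proof of this statement for you to be compared against: it is imported wholesale from \cite{Park16}, Lemma 3.1, with only the remark that Park's proof, written for classical string link operators, nowhere uses that hypothesis and hence applies verbatim to generalized ones. Your proposal therefore supplies an argument where the paper defers, and the route you choose---running the paper's proof of Proposition~\ref{prop:infection and SL} one intersection deeper, with the signed triple points of $E_1\cap E_2\cap E_3$ playing the role of the self-linking of the double curve---is the natural one; it is exactly the picture the authors themselves invoke in Remark~\ref{rem: preserve triple}, and it makes visible why the classical/generalized distinction is irrelevant. Your determinant bookkeeping is also right: a triple point in the infected region must use copies of $E_{\sigma(1)},E_{\sigma(2)},E_{\sigma(3)}$ from $F_1',F_2',F_3'$ respectively for some $\sigma\in S_3$, permuting which surface supplies which $E_j$ multiplies the local sign by $\operatorname{sgn}(\sigma)$, each glued copy carries the sign of its intersection point with $\alpha_j$, and summing gives $\sum_{\sigma\in S_3}\operatorname{sgn}(\sigma)\,c_{1,\sigma(1)}c_{2,\sigma(2)}c_{3,\sigma(3)}=\det(C)$.

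One step, however, is genuinely incomplete as written. When some $F_i$ meets all three circles $\alpha_1,\alpha_2,\alpha_3$ and $E_1\cap E_2\cap E_3\neq\emptyset$ (the case of interest whenever $\overline\mu_{123}(\widehat J)\neq 0$), the unsmoothed $F_i'$ contains copies of all three $E_j$, and its double curves are \emph{not} disjoint: near each triple point of $E_1\cap E_2\cap E_3$, the three families of double curves inside $F_i'$ meet at triple points of self-intersection of $F_i'$. The smoothing of Figure~\ref{fig: smoothing} applies only to disjoint double curves, so it cannot be performed in one pass as you (following the genus-two argument) describe; you must resolve iteratively---first smooth the $E_1$-copies against the $E_2$-copies, after which the resulting embedded surface meets the $E_3$-copies in \emph{disjoint} simple closed curves (the two lines through each old triple self-intersection become two disjoint hyperbola-like branches), and a second smoothing yields the embedded $F_i'$. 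Relatedly, your stated reason for excluding spurious triple points near the smoothings ("away from the triple-point locus of the $E_j$'s") is not by itself sufficient, since a priori new triple points of $F_1'\cap F_2'\cap F_3'$ could appear anywhere in a smoothing region. The argument that closes this is a local separation statement: in a smoothing neighborhood of a double curve formed by an $E_a$-copy and an $E_b$-copy inside $F_i'$, the only sheets of the other two surfaces that can enter are copies of $E_c$ with $c\neq a,b$ (and only near the triple points of $E_1\cap E_2\cap E_3$); any two such copies are parallel pushoffs, hence disjoint, so no triple point of the three surfaces lies in any smoothing region. With the iterated resolution and this separation argument in place, your count is valid and the formula follows.
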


\begin{remark}\label{rem: preserve triple} \normalfont Notice that if $L$ is a $3$-component link,  $\alpha = \alpha_1\vee \alpha_2$ is a wedge of two circles in the complement of $L$ and $J$ is a $2$-component string link, $J$ with zero pairwise linking, then 
$\overline{\mu}_{123}(L) = \overline{\mu}_{123}(L_{\alpha}(J))$.
This can be easily justified since the procedure of modifying the Seifert surfaces seen in the proof of Proposition~\ref{prop:infection and SL} does not introduce any new triple points in the intersection of the Seifert surfaces.
\end{remark}

Notice that infection by a $3$-component string link (even one with vanishing Sato-Levine invariant) might alter $\overline{\mu}_{1122}$.   For example, as seen in Figure~\ref{fig: whiteheadlink}, the Whitehead link can be obtained by infecting an unlink by the Borromean rings.  While it is possible to write down a formula revealing the precise effect of string link infection on the Sato-Levine invariant, the formula and proof are both sufficiently complicated to be prohibitive in their use.

\begin{figure}[h]
\setlength{\unitlength}{1pt}
\begin{picture}(0,290)
\put(-170,20){\includegraphics[height=.40\textheight]{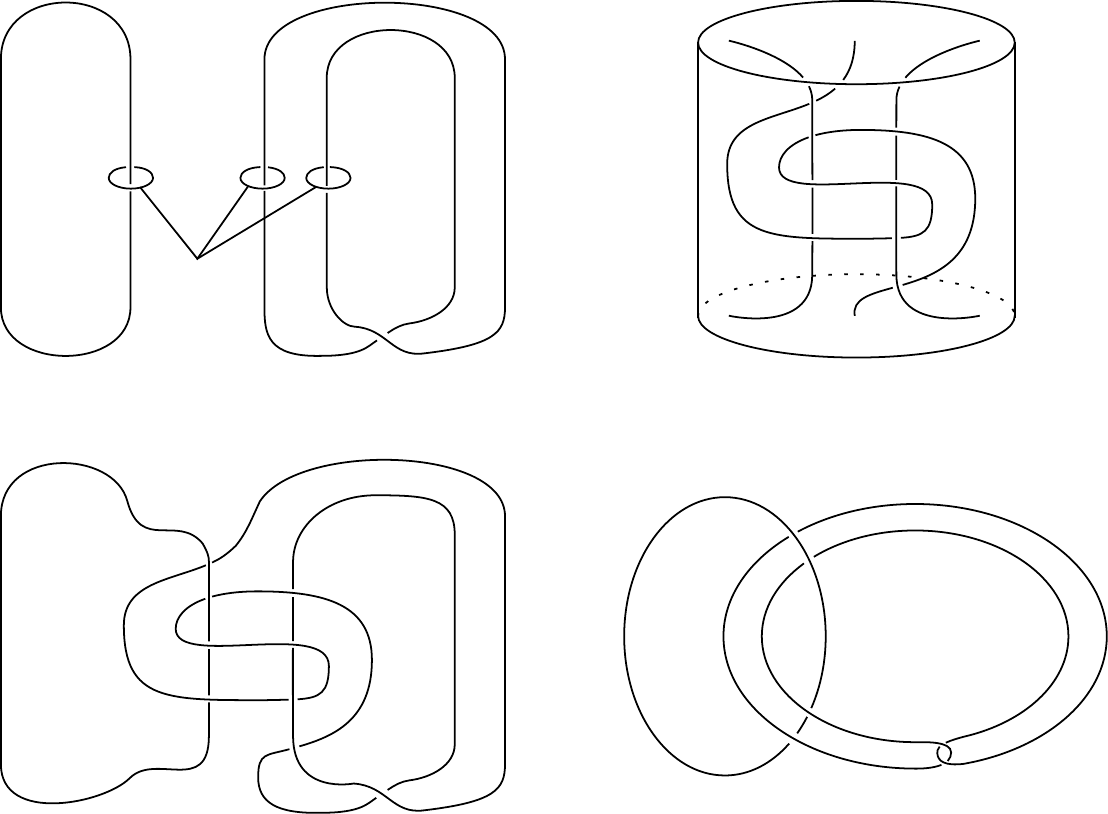}}
\put(-90,150){$L$}
\put(-100,190){$\alpha$}
\put(100,150){$J$}
\put(-100,-5){$L_\alpha(J)$}
\put(65,-5){Whitehead link}
\put(-210,70){$\longrightarrow$}
\put(7,70){$\cong$}
\end{picture}
\caption{The Whitehead link $L_\alpha(J)$ obtained from infecting the unlink $L$ by the Borromean rings $\widehat{J}$.}\label{fig: whiteheadlink}
\end{figure}

Before we provide the proof of Theorem~\ref{main thm genus 3}, we recall some notation that will be useful.  For a $k$-component link $L=L_1,\dots, L_k$, we will sum together the Sato-Levine invariants of all of the $2$-component sublinks of $L$.  Let 
$$
\SL(L) = \Sum_{1\le i<j\le k}\overline{\mu}_{1122}(L_i,L_j)e_i\wedge e_j\in (\Z/2)^k\wedge (\Z/2)^k.
$$ 

The reader will notice that $\SL$ only considers the Sato-Levine invariants modulo 2.  A little bit of algebra reveals how infection by a $2$-component string link changes $\SL(L)$.  


\begin{proposition}\label{infection and SL 2}
Let $L = L_1, \dots, L_k$ be a $k$-component link and $\alpha = \alpha_1\vee \alpha_2$ be a wedge of circles in the complement of $L$.  Let $J$ be a $2$-component string link with zero linking numbers.  Let $C = (c_{i,j})$ be the $k\times 2$ matrix given by $c_{i,j} = \lnk(L_i, \alpha_j)$.  Then 
$$
\begin{array}{rcl}\SL(L_\alpha(J)) &=& \SL(L)+(Ce_1\wedge Ce_2) \overline\mu_{1122}(\widehat{J}), \end{array}$$
where $e_1$ and $e_2$ are the standard basis vectors in $(\Z/2)^2$.  
\end{proposition}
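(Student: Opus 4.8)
The plan is to reduce the statement to the two-component case already handled in Corollary~\ref{cor: infection and SL} by splitting $\SL$ into its constituent two-component sublinks. Since $\SL(L_\alpha(J))=\sum_{1\le i<j\le k}\overline\mu_{1122}(M_i,M_j)\,e_i\wedge e_j$, where $M_i$ denotes the $i$-th component of $L_\alpha(J)$, it suffices to understand the mod $2$ Sato--Levine invariant of each two-component sublink of the infected link.

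First I would observe that the two-component sublink of $L_\alpha(J)$ on the $i$-th and $j$-th strands is exactly the infection $(L_i,L_j)_\alpha(J)$ of the sublink $(L_i,L_j)$ by the same wedge $\alpha$ and string link $J$. This is because infection only modifies a neighborhood of $\alpha$ in the ambient homology sphere, and $\alpha$ lies in the complement of all of $L$; forgetting the components other than $L_i$ and $L_j$ commutes with cutting out $n(\alpha)$ and gluing in $E(J)$. Under this identification the relevant $2\times 2$ linking matrix is the submatrix $C_{ij}=\begin{bmatrix}c_{i,1}&c_{i,2}\\ c_{j,1}&c_{j,2}\end{bmatrix}$ of $C$, whose two rows are indexed by $i$ and $j$.

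With this in hand, applying Corollary~\ref{cor: infection and SL} to each sublink gives $\overline\mu_{1122}((L_i,L_j)_\alpha(J))\equiv \overline\mu_{1122}(L_i,L_j)+\det(C_{ij})\,\overline\mu_{1122}(\widehat J)\pmod 2$. Summing over $i<j$ then yields $\SL(L_\alpha(J))=\SL(L)+\overline\mu_{1122}(\widehat J)\sum_{i<j}\det(C_{ij})\,e_i\wedge e_j$, so the proof reduces to the purely algebraic identity $\sum_{i<j}\det(C_{ij})\,e_i\wedge e_j=Ce_1\wedge Ce_2$ in $(\Z/2)^k\wedge(\Z/2)^k$. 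This last step is standard multilinear algebra: expanding $Ce_1=\sum_i c_{i,1}e_i$ and $Ce_2=\sum_j c_{j,2}e_j$, the coefficient of $e_i\wedge e_j$ (for $i<j$) in $Ce_1\wedge Ce_2$ is $c_{i,1}c_{j,2}-c_{i,2}c_{j,1}=\det(C_{ij})$, which is precisely the statement that the second exterior power of $C$ acts by its $2\times 2$ minors.

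I do not expect any serious obstacle here; the content is entirely in the two reductions above. The one point requiring a careful (if brief) topological justification is the claim that a two-component sublink of $L_\alpha(J)$ is itself the infection of the corresponding sublink of $L$, together with the identification of its linking matrix as the submatrix $C_{ij}$. Once this is granted, everything else is a bookkeeping computation and an appeal to Corollary~\ref{cor: infection and SL}; in particular, because $\SL$ records Sato--Levine invariants only mod $2$, it is the linear term $\det(C_{ij})$ rather than the quadratic $\det(C_{ij})^2$ of Proposition~\ref{prop:infection and SL} that appears.
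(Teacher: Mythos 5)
Your proposal is correct and follows essentially the same route as the paper: decompose $\SL$ into its two-component sublinks, apply Corollary~\ref{cor: infection and SL} to each (using, as you note explicitly and the paper uses implicitly, that a sublink of $L_\alpha(J)$ is the infection of the corresponding sublink of $L$), and then identify $\sum_{i<j}\det(C_{ij})\,e_i\wedge e_j$ with $Ce_1\wedge Ce_2$ by the standard expansion of the wedge product. Your added justification that infection commutes with forgetting components is a point the paper leaves tacit, but the argument is otherwise identical.
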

\begin{proof}
Let $L'=L_\alpha(J)$.  Using Corollary~\ref{cor: infection and SL}, we see that 
$$
\begin{array}{rcl}\SL(L') &=& \Sum_{1\le i<j\le k} \overline{\mu}_{1122}(L_i', L_j') e_i \wedge e_j 
\\&=&\Sum_{1\le i<j\le k} \left(\overline{\mu}_{1122}(L_i, L_j)+\det \begin{bmatrix}c_{i,1}&c_{i,2}\\c_{j,1}&c_{j,2}\end{bmatrix}\overline\mu_{1122}(\widehat J)\right) e_i \wedge e_j 
\\&=& \SL(L)+\Sum_{1\le i<j\le k} \left(\det \begin{bmatrix}c_{i,1}&c_{i,2}\\c_{j,1}&c_{j,2}\end{bmatrix}\overline\mu_{1122}(\widehat J)\right) e_i \wedge e_j.
\end{array}$$
To complete the proof it suffices to show that $\Sum_{1\le i<j\le k} \det \begin{bmatrix}c_{i,1}&c_{i,2}\\c_{j,1}&c_{j,2}\end{bmatrix} e_i \wedge e_j = Ce_1\wedge Ce_2$. Expanding the right quantity, we obtain 
$$
\begin{array}{rcl}
Ce_1\wedge Ce_2&=&
\left(\Sum_{i=1}^k c_{i,1}e_i\right)\wedge\left(\Sum_{j=1}^k c_{j,2}e_j\right) = \Sum_{i=1}^k\Sum_{j=1}^k c_{i,1}c_{j,2}e_i\wedge e_j.
\end{array}
$$
Using the facts that $e_i\wedge e_i=0$ and $e_j\wedge e_i=-e_i\wedge e_j$, we can reorder this sum.
$$
\begin{array}{rcl}
Ce_1\wedge Ce_2&=& \Sum_{1\le i<j\le k} \left(c_{i,1}c_{j,2}-c_{i,2}c_{j,1}\right)e_i\wedge e_j = \Sum_{1\le i<j\le k} \det \begin{bmatrix}c_{i,1}&c_{i,2}\\c_{j,1}&c_{j,2}\end{bmatrix} e_i \wedge e_j.
\end{array}
$$
The second equality is simply the definition of the determinant.  This completes the proof. \end{proof}

Even though Theorem~\ref{main thm genus 3} is a consequence of Theorem~\ref{main thm genus g}, a proof for Theorem~\ref{main thm genus 3} is provided now.

\begin{theorem}\label{main thm genus 3}
Let $K$ be an algebraically slice knot with genus $3$ Seifert surface $F$ and derivative $L=L_1,L_2, L_3$.   Extend $L_1,L_2, L_3$ to a basis for $H_1(F)$.  Let $M=\begin{bmatrix}0&A\\B&C\end{bmatrix}$ be the resulting Seifert matrix.

If $\overline\mu_{123}(L)\in \Z$ is a multiple of $(\det(A)-\det(B))$ and $(A\wedge A-B^T\wedge B^T):(\Z/2)^3\wedge (\Z/2)^3\to(\Z/2)^3\wedge (\Z/2)^3$ is onto then $K$ is $1$-solvable.  
\end{theorem}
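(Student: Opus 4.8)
The plan is to imitate the genus $2$ argument of Proposition~\ref{Change iijj}, but to first arrange that the triple linking number vanishes and only afterward repair the Sato--Levine invariants. The overarching strategy is to replace $K$ by a sequence of homology concordant knots, each still bounding a genus $3$ Seifert surface carrying a derivative, until that derivative satisfies all four obstructions of \cite{Martin15}; once the derivative is $0$-solvable, \cite[Proposition 8.9]{COT03} forces $K$ to be $1$-solvable, and homology concordance invariance of solvability transfers this back to the original $K$. Every replacement is produced by the modification lemma, Proposition~\ref{string link mod}: taking a wedge $\alpha=\alpha_1\vee\alpha_2\vee\alpha_3$ of curves on $F$, its two Seifert pushoffs $\alpha^+$ and $\alpha^-$ are framed wedge cobordant in $(M-K)\times[0,1]$ via the Seifert framed longitudes, exactly as in Proposition~\ref{Change iijj}, so infecting by $(J,-J)$ along $(\alpha^+,\alpha^-)$ yields a knot homology concordant to $K$.

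First I would kill the triple linking number. Choosing $\alpha_1,\alpha_2,\alpha_3$ to be the basis curves that extend the derivative $L$ on $F$, the linking matrices $\bigl(\lnk(L_i,\alpha_j^+)\bigr)$ and $\bigl(\lnk(L_i,\alpha_j^-)\bigr)$ are precisely $A$ and $B^T$. Applying the triple linking formula \cite[Lemma 3.1]{Park16} to the infection along $\alpha^+$ by $J$ and along $\alpha^-$ by $-J$, and using $\overline\mu_{123}(\widehat{-J})=-\overline\mu_{123}(\widehat J)$ together with $\det(B^T)=\det(B)$, the net change in $\overline\mu_{123}$ is $(\det(A)-\det(B))\,\overline\mu_{123}(\widehat J)$. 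Since $\overline\mu_{123}(L)$ is assumed to be a multiple of $\det(A)-\det(B)$, I can take a $3$-component string link $J$ (a suitable number of Borromean-rings summands) with $\overline\mu_{123}(\widehat J)$ equal to the required quotient, producing a homology concordant knot whose derivative has $\overline\mu_{123}=0$.

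Next I would repair the Sato--Levine invariants modulo $2$, and this is the step that uses surjectivity of $A\wedge A-B^T\wedge B^T$ over $\Z/2$. Infecting along a pair $(\alpha_i^+,\alpha_j^-)$ by a $2$-component string link with $\overline\mu_{1122}(\widehat J)$ odd changes $\SL$ by $(A\wedge A-B^T\wedge B^T)(e_i\wedge e_j)$, by the two-sided version of Proposition~\ref{infection and SL 2} combined with Corollary~\ref{cor: infection and SL}; the signs are irrelevant since $\SL$ takes values in $(\Z/2)^3\wedge(\Z/2)^3$. Ranging over $\Z/2$-combinations of such pairs, the achievable changes fill out the entire image of $A\wedge A-B^T\wedge B^T$, which by hypothesis is all of $(\Z/2)^3\wedge(\Z/2)^3$; hence I can cancel whatever value of $\SL$ the triple-linking step left behind. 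Crucially, $2$-component infection preserves $\overline\mu_{123}$ by Remark~\ref{rem: preserve triple}, so the derivative now has both $\overline\mu_{123}=0$ and $\SL=0$.

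Finally I would invoke Proposition~\ref{main thm genus 1'} to pass to a further homology concordant knot whose derivative has all Arf invariants $0$; because that construction uses only single-curve generalized satellites, it alters neither the Sato--Levine invariants nor the triple linking number. The resulting derivative then satisfies all four conditions of \cite[Theorem 1]{Martin15}, hence is $0$-solvable, and \cite[Proposition 8.9]{COT03} concludes that $K$ is $1$-solvable. I expect the main obstacle to lie in the interaction between the first two steps: infecting by a $3$-component string link scrambles the Sato--Levine invariants in an essentially uncontrollable way, so I cannot hope to track $\SL$ through the triple-linking correction, and the surjectivity of $A\wedge A-B^T\wedge B^T$ is exactly what guarantees that, whatever $\SL$ becomes, it can be driven back to zero without disturbing $\overline\mu_{123}$. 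Secondary care is needed to check that each infection keeps $K$ bounding a genus $3$ Seifert surface on which the images of $L_1,L_2,L_3$ remain a derivative, and to pin down the signs and framings so that Proposition~\ref{string link mod} genuinely applies.
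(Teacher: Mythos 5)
Your proposal is correct and follows essentially the same route as the paper's own proof: kill $\overline\mu_{123}$ first via a $3$-component infection along the pushoffs of $\alpha_1\vee\alpha_2\vee\alpha_3$ using \cite[Lemma 3.1]{Park16}, then use surjectivity of $A\wedge A-B^T\wedge B^T$ over $\Z/2$ to cancel $\SL$ by $2$-component infections (which preserve $\overline\mu_{123}$ by Remark~\ref{rem: preserve triple}), and finally fix the Arf invariants via Proposition~\ref{main thm genus 1'} before applying \cite[Theorem 1]{Martin15} and \cite[Theorem 8.9]{COT03}. The only quibble is notational: the infections correcting $\SL$ are along the pair of pushoffs $\bigl((\alpha_i\vee\alpha_j)^+,(\alpha_i\vee\alpha_j)^-\bigr)$ by $(X,-X)$, not along a mixed pair $(\alpha_i^+,\alpha_j^-)$, but your computation of the resulting change makes clear this is what you intended.
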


\begin{proof}[Proof of Theorem~\ref{main thm genus 3}]

Let $K$ be an algebraically slice knot with genus 3 Seifert surface $F$ and derivative $L=L_1,L_2,L_3$.  Extend $L_1,L_2,L_3$ to a basis $\{L_1,L_2,L_3, \alpha_1,\alpha_2,\alpha_3\}$ for $H_1(F)$ to obtain the Seifert matrix $M=\begin{bmatrix}0&A\\B&C\end{bmatrix}$.  According to Remark~\ref{basis} we may choose $\alpha_1, \alpha_2, \alpha_3$ to be disjoint simple closed curves. By assumption, $\overline\mu_{123}(L) =  (\det(A)-\det(B))k$ for some $k\in \Z$.  Let $J$ be any $3$-component string link with $\overline\mu_{123}(\widehat{J})=-k$.

Using arcs to connect $\alpha_1$, $\alpha_2$, and $\alpha_3$ results in a wedge of circles $\alpha=\alpha_1\vee\alpha_2\vee\alpha_3$.  By Proposition~\ref{string link mod} 
we see that $K':=K_{\alpha^+,\alpha^-}(J,-J)$ is homology concordant to $K$.

On the genus 3 Seifert surface $F'$ resulting from this infection,  $K'$ has as a derivative $L':=L_{\alpha^+,\alpha^-}(J,-J)$.  Notice that the $(i,j)$-entry of $A$ is $A_{i,j} = \lnk(L_i,\alpha_j^+)$ and the $(j,i)$-entry of $B$ is $B_{j,i} = \lnk(\alpha_j,L_i^+) = \lnk(L_i, \alpha_j^-)$.   According to \cite[Lemma 3.1]{Park16}, and our assumptions regarding the triple linking numbers of $L$ and $\widehat{J}$, we have
$$
\begin{array}{rcl}
\overline\mu_{123}(L') &=& \overline\mu_{123}(L)+\det(A)\overline\mu_{123}(\widehat J) - \det(B^T)\overline\mu_{123}(\widehat J) 
\\&=& (\det(A)-\det(B))k+(\det(A)-\det(B))(-k)
\\&=&0.
\end{array}
$$
We no longer have any control over $\SL(L')$.  Instead, since we assume $A\wedge A-B^T\wedge B^T$ is onto on $(\Z/2)^3\wedge (\Z/2)^3$, there exist $x,y,z\in \Z/2$ such that,   
$$
\SL(L') = \left(A\wedge A-B^T\wedge B^T\right)(x\cdot e_1\wedge e_2+y\cdot e_1\wedge e_3+z\cdot e_2\wedge e_3).
$$
  Let $\beta_1,\beta_2,\beta_3$ be the image of $\alpha_1,\alpha_2,\alpha_3$ on $F'$.  Let $\beta = \beta_1\vee \beta_2$ and $X$ be some $2$-component string link with $\overline\mu_{1122}(\widehat{X})= x$.  Let $K'' = K'_{\beta^+,\beta^-}(X,-X)$.  Then $K''$ has derivative $L''=L'_{\beta^+,\beta^-}(X,-X)$ such that $\SL(L'')$ is given by
$$ \left(A\wedge A-B^T\wedge B^T\right)(x\cdot e_1\wedge e_2+y\cdot e_1\wedge e_3+z\cdot e_2\wedge e_3) -  x\cdot \left(A\wedge A-B^T\wedge B^T\right)(e_1\wedge e_2)$$
$$=\left(A\wedge A-B^T\wedge B^T\right)(y\cdot e_1\wedge e_3+z\cdot e_2\wedge e_3).
$$

According to Remark~\ref{rem: preserve triple},  $\overline\mu_{123}(L'') = \overline\mu_{123}(L')=0$. Similarly, we infect along the image of $\alpha_1\vee \alpha_3$ and $\alpha_2\vee \alpha_3$ to eliminate $y$ and $z$.  Thus, $K$ is homology concordant to a genus 3 knot $K'''$ admitting derivative $L'''$ with $\overline\mu_{123}(L''')=0$ and $\SL(L''')=0$ over $\Z/2$.

Utilizing Proposition~\ref{main thm genus 1'} we further modify $K$ until the $\Arf$-invariants of the components of the derivative vanish.  Thus, $K$ is homology concordant to a knot which admits a $0$-solvable derivative, and so is $1$-solvable. \end{proof}

Recall the definition from the introduction, for an $n$ component link  $L$
$$\TL(L) = \Sum_{1\le i<j<k\le n}\overline{\mu}_{123}(L_i,L_j,L_k)e_i\wedge e_j\wedge e_k\in \Z^n\wedge \Z^n\wedge \Z^n.
$$

The exact same proof used to prove Proposition~\ref{infection and SL 2} using \cite[Lemma 3.1]{Park16} in place of Corollary~\ref{cor: infection and SL} shows that 
\begin{proposition}\label{infection and TL}
Let $L = L_1, \dots, L_n$ be an $n$-component link.  Let $\alpha = \alpha_1\vee \alpha_2\vee \alpha_3$ be a wedge of three circles in the complement of $L$.  Let $J$ be a $3$-component string link with zero pairwise linking numbers.  Let $C = (c_{i,j})$ be the $n\times 3$ matrix given by $c_{i,j} = \lnk(L_i, \alpha_j)$.  Then 
$$
\begin{array}{rcl}\TL(L_\alpha(J)) &=& \TL(L)+(Ce_1\wedge Ce_2\wedge Ce_3) \overline\mu_{123}(\widehat{J}) \end{array}$$
where $e_1$, $e_2$, and $e_3$ are the standard basis vectors in $\Z^3$.  
\end{proposition}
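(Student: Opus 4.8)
The plan is to follow verbatim the argument given for Proposition~\ref{infection and SL 2}, replacing the two-variable Sato-Levine formula of Corollary~\ref{cor: infection and SL} with the three-variable triple-linking formula \cite[Lemma 3.1]{Park16}. Writing $L' = L_\alpha(J)$, I would first note that infecting by a string link with vanishing pairwise linking numbers preserves the vanishing of all pairwise linking numbers, so that $\overline{\mu}_{123}$ is defined on every $3$-component sublink of $L'$ and $\TL(L')$ makes sense. Expanding over these sublinks,
$$
\TL(L') = \Sum_{1\le i<j<k\le n}\overline{\mu}_{123}(L_i', L_j', L_k')\, e_i\wedge e_j\wedge e_k,
$$
and for each triple $i<j<k$ the wedge $\alpha = \alpha_1\vee\alpha_2\vee\alpha_3$ meets $L_i,L_j,L_k$ with linking matrix the $3\times 3$ submatrix of $C$ on rows $i,j,k$, so \cite[Lemma 3.1]{Park16} gives
$$
\overline{\mu}_{123}(L_i', L_j', L_k') = \overline{\mu}_{123}(L_i, L_j, L_k) + \det\begin{bmatrix}c_{i,1}&c_{i,2}&c_{i,3}\\ c_{j,1}&c_{j,2}&c_{j,3}\\ c_{k,1}&c_{k,2}&c_{k,3}\end{bmatrix}\overline{\mu}_{123}(\widehat{J}).
$$
Summing over all triples separates out $\TL(L)$ together with $\overline{\mu}_{123}(\widehat J)$ times $\Sum_{i<j<k}\det(\cdots)\,e_i\wedge e_j\wedge e_k$.

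It then remains to identify this last sum with $Ce_1\wedge Ce_2\wedge Ce_3$, exactly as the rank-two computation closing the proof of Proposition~\ref{infection and SL 2} identified the analogous two-fold sum with $Ce_1\wedge Ce_2$. Expanding $Ce_1=\Sum_i c_{i,1}e_i$ and likewise for $Ce_2$ and $Ce_3$,
$$
Ce_1\wedge Ce_2\wedge Ce_3 = \Sum_{i,j,k} c_{i,1}c_{j,2}c_{k,3}\, e_i\wedge e_j\wedge e_k,
$$
and collecting, for each unordered triple $\{i,j,k\}$, the six permutations of its indices (using that a wedge with a repeated factor vanishes and that $\wedge$ is antisymmetric) produces exactly $\Sum_{i<j<k}\det(\cdots)\,e_i\wedge e_j\wedge e_k$, since the resulting signed permutation sum is the Leibniz expansion of the $3\times 3$ determinant.

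The genuinely new topological content is entirely supplied by \cite[Lemma 3.1]{Park16}; everything else is the purely algebraic identity asserting that the $3\times 3$ minors of $C$ are the coefficients of the decomposable vector $Ce_1\wedge Ce_2\wedge Ce_3$ in the basis $\{e_i\wedge e_j\wedge e_k\}_{i<j<k}$ of $\Z^n\wedge\Z^n\wedge\Z^n$. I expect the only mild obstacle to be bookkeeping the permutation signs in this antisymmetrization, but this is the standard fact that the coordinates of a $3$-fold decomposable wedge $v_1\wedge v_2\wedge v_3$ are the maximal minors of the matrix $[\,v_1\ v_2\ v_3\,]$, and it needs no input beyond the cited lemma.
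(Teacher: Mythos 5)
Your proposal is correct and is essentially the paper's own proof: the paper disposes of this proposition in one line by declaring it to be ``the exact same proof'' as Proposition~\ref{infection and SL 2} with \cite[Lemma 3.1]{Park16} substituted for Corollary~\ref{cor: infection and SL}, and what you have written out (applying Park's formula to each $3$-component sublink, summing, and identifying $\Sum_{i<j<k}\det(\cdot)\,e_i\wedge e_j\wedge e_k$ with $Ce_1\wedge Ce_2\wedge Ce_3$ via the minors-of-a-decomposable-wedge identity) is precisely that substituted argument, carried out in full.
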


The proof of Theorem~\ref{main thm genus g} below uses the same ideas as that of Theorem~\ref{main thm genus 3}.

\begin{theorem}\label{main thm genus g}
Let $K$ be an algebraically slice knot with genus $g$ Seifert surface $F$ and derivative $L=L_1, \dots, L_g$.   Extend $L_1,\dots, L_g$ to a basis for $H_1(F)$.  Let $M=\begin{bmatrix}0&A\\B&C\end{bmatrix}$ be the resulting Seifert matrix.  If $\TL(L)\in \Z^g\wedge\Z^g\wedge\Z^g$ is in the image of $A\wedge A \wedge A-B^T \wedge B^T \wedge B^T$ and $A\wedge A-B^T\wedge B^T:(\Z/2)^g\wedge (\Z/2)^g\to(\Z/2)^g\wedge (\Z/2)^g$ is onto, then $K$ is $1$-solvable.  
\end{theorem}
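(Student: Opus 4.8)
The plan is to exhibit a knot homology concordant to $K$ that admits a $0$-solvable derivative and then apply \cite[Proposition 8.9]{COT03}, using that $1$-solvability is invariant under homology concordance. By \cite[Theorem 1]{Martin15}, a link with vanishing pairwise linking numbers is $0$-solvable exactly when its component $\Arf$-invariants vanish, its $2$-component Sato-Levine invariants vanish modulo $2$, and its $3$-component triple linking numbers vanish; since $L$ is a derivative the pairwise linking numbers are already zero. So I would clear $\TL(L)$, then $\SL(L)$, and finally the $\Arf$-invariants, replacing $K$ at each stage by a homology concordant genus $g$ knot via Proposition~\ref{string link mod} and Proposition~\ref{main thm genus 1'}. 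The order is dictated by how the modifications interact: a $3$-component infection can disturb $\SL$, so the triple linking numbers must go first; a $2$-component infection preserves triple linking numbers (Remark~\ref{rem: preserve triple}), so the subsequent $\SL$-clearing leaves $\TL$ at zero; and the generalized satellite operation of Proposition~\ref{main thm genus 1'} changes neither $\SL$ nor $\TL$, so the $\Arf$-invariants can safely be cleared last.

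First I would extend $L_1,\dots,L_g$ to a basis $\{L_1,\dots,L_g,\alpha_1,\dots,\alpha_g\}$ for $H_1(F)$ with the $\alpha_i$ disjoint simple closed curves (Remark~\ref{basis}), so that $A_{i,j}=\lnk(L_i,\alpha_j^+)$ and $(B^T)_{i,j}=\lnk(L_i,\alpha_j^-)$. The key computation is that infecting along the wedge $\alpha_i\vee\alpha_j\vee\alpha_k$ (joined by arcs) using the positive pushoffs realizes, through Proposition~\ref{infection and TL}, the operator $Ce_1\wedge Ce_2\wedge Ce_3=Ae_i\wedge Ae_j\wedge Ae_k=(A\wedge A\wedge A)(e_i\wedge e_j\wedge e_k)$, because the columns of the linking matrix $C$ are columns $i,j,k$ of $A$; infecting along the negative pushoffs by the mirror reverse $-J$ instead contributes $-(B^T\wedge B^T\wedge B^T)(e_i\wedge e_j\wedge e_k)\,\overline\mu_{123}(\widehat J)$, the sign coming from $\overline\mu_{123}(\widehat{-J})=-\overline\mu_{123}(\widehat J)$. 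Hence the combined infection $K_{\alpha^+,\alpha^-}(J,-J)$ changes $\TL$ by $(A\wedge A\wedge A-B^T\wedge B^T\wedge B^T)(e_i\wedge e_j\wedge e_k)\,\overline\mu_{123}(\widehat J)$. Writing the hypothesis as $\TL(L)=(A\wedge A\wedge A-B^T\wedge B^T\wedge B^T)(v)$ with $v=\sum_{i<j<k} v_{ijk}\,e_i\wedge e_j\wedge e_k$, I would perform one such infection for each triple $i<j<k$ by a string link $J$ with $\overline\mu_{123}(\widehat J)=-v_{ijk}$, arriving at a homology concordant knot $K'$ with derivative $L'$ satisfying $\TL(L')=0$.

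Next I would clear $\SL$. The $3$-component infections may have altered the Sato-Levine invariants, but surjectivity of $A\wedge A-B^T\wedge B^T$ on $(\Z/2)^g\wedge(\Z/2)^g$ lets me write $\SL(L')=(A\wedge A-B^T\wedge B^T)(w)$ with $w=\sum_{i<j} w_{ij}\,e_i\wedge e_j$. Exactly as in the genus $3$ argument, Proposition~\ref{infection and SL 2} shows that infecting along each $\alpha_i\vee\alpha_j$ by a $2$-component string link $X$ with $\overline\mu_{1122}(\widehat X)=w_{ij}$ changes $\SL$ by $w_{ij}(A\wedge A-B^T\wedge B^T)(e_i\wedge e_j)$ (signs being irrelevant modulo $2$), while by Remark~\ref{rem: preserve triple} it leaves $\TL$ equal to zero. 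The resulting knot $K''$ thus has a derivative with $\TL=0$ and $\SL=0$. Finally, Proposition~\ref{main thm genus 1'} replaces $K''$ by a homology concordant genus $g$ knot whose derivative moreover has vanishing $\Arf$-invariants, a generalized satellite step that changes neither $\SL$ nor $\TL$. That derivative now satisfies every hypothesis of \cite[Theorem 1]{Martin15}, hence is $0$-solvable, and \cite[Proposition 8.9]{COT03} concludes that this knot --- and therefore $K$ --- is $1$-solvable.

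The main obstacle is the bookkeeping of the triple linking step: one must verify that the submatrix $C$ of linking numbers produced by infecting along $\alpha_i\vee\alpha_j\vee\alpha_k$ really consists of the appropriate columns of $A$ (respectively $B^T$), with the correct orientations, so that Proposition~\ref{infection and TL} assembles precisely into $A\wedge A\wedge A-B^T\wedge B^T\wedge B^T$ rather than some reindexed or sign-twisted variant. A secondary subtlety is that every infection after the first is performed along the images of the curves $\alpha_i$ on the successively modified Seifert surfaces; one must confirm, as in the proof of Theorem~\ref{main thm genus 3}, that the linking matrices $A$ and $B$ are preserved under these infections so that the same two operators govern all stages of the argument.
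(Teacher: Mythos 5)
Your proposal is correct and follows essentially the same route as the paper's own proof: clear $\TL$ via $3$-component string link infections along the wedges $\alpha_i\vee\alpha_j\vee\alpha_k$ using Propositions~\ref{string link mod} and~\ref{infection and TL}, then clear $\SL$ via $2$-component infections (Proposition~\ref{infection and SL 2}, Remark~\ref{rem: preserve triple}), then clear the $\Arf$-invariants with Proposition~\ref{main thm genus 1'}, and finish with \cite[Theorem 1]{Martin15} and \cite[Theorem 8.9]{COT03}. The ordering of the three clearing steps and the sign bookkeeping for $\overline\mu_{123}(\widehat{-J})$ match the paper exactly.
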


\begin{proof}

Let $K$ be an algebraically slice knot with genus $g$ Seifert surface $F$ and derivative $L=L_1,\dots,L_g$.  Extend $L_1,\dots,L_g$ to a basis $\{L_1,\dots,L_g, \alpha_1,\dots,\alpha_g\}$ for $H_1(F)$ to obtain the $2g\times 2g$ Seifert matrix $M=\begin{bmatrix}0&A\\B&C\end{bmatrix}$. According to Remark~\ref{basis} we may choose $\alpha_1 \dots \alpha_g$ to be disjoint simple closed curves. By assumption, $\TL(L)$ is in the image of $A\wedge A \wedge A-B^T \wedge B^T \wedge B^T$.  For some choice of $x_{ijk}\in \Z$
$$
\TL(L) = \left(A\wedge A \wedge A-B^T \wedge B^T \wedge B^T\right)\left(\Sum_{1\le i<j<k\le g} x_{ijk}e_i\wedge e_j\wedge e_k\right).
$$

Let $\alpha$ be the wedge of circles $\alpha_1\vee \alpha_2\vee \alpha_3$ and $X$ be a $3$-component string link with $\overline\mu_{123}(\widehat{X}) = -x_{123}$.  According to Proposition~\ref{string link mod}, $K$ is homology concordant to $K':=K_{\alpha^+,\alpha^-}(X,-X)$.  On $F'$, the Seifert surface resulting from this infection, $K'$ admits a derivative $L' = L_{\alpha^+,\alpha^-}(X,-X)$.   

According to Proposition~\ref{infection and TL} and the assumptions about $\TL(L)$ and $\overline\mu_{123}(\widehat X)$,
$$
\begin{array}{rcl}
\TL(L') &=&\TL(L) +  (Ae_1\wedge Ae_2\wedge Ae_3)\overline\mu_{123}(\widehat X)- (B^Te_1\wedge B^Te_2\wedge B^Te_3)\overline\mu_{123}(\widehat X)
\\ &=& \left(A\wedge A \wedge A-B^T \wedge B^T \wedge B^T\right)\left(\Sum_{1\le i<j<k\le g} x_{ijk}e_i\wedge e_j\wedge e_k\right)
 \\ & & - x_{123}\left(A\wedge A \wedge A-B^T \wedge B^T \wedge B^T\right) e_1\wedge e_2\wedge e_3
\end{array}
$$

Thus, $$\TL(L') = \left(A\wedge A \wedge A-B^T \wedge B^T \wedge B^T\right)\left(\Sum_{1\le i<j<k\le n} x'_{ijk}e_i\wedge e_j\wedge e_k\right)$$ where $x'_{123}=0$ and otherwise $x'_{ijk}=x_{ijk}$.  Repeating this process using all remaining choices of $i,j,k$ results in a genus $g$ knot $K''$ homology concordant to $K$ admitting a derivative $L''$ with $\TL(L'')=0$. 

We have no control over $\SL(L'')$.  Instead we use the assumption that $A\wedge A-B^T\wedge B^T$ is onto on $(\Z/2)^g \wedge (\Z/2)^g$ to conclude that for some $y_{ij}$
$$
\SL(L'') = (A\wedge A-B^T\wedge B^T)\Sum_{1\le i<j\le g} y_{ij} e_i\wedge e_j.
$$

By infecting along $\alpha_i\vee \alpha_j$ we can replace $L''$ with a new derivative $L'''$ for which $y_{ij} = 0$.  Thus, $K$ is homology concordant to a knot $K'''$ with derivative $L'''$ for which $\TL(L''')$ and $\SL(L''')$ both vanish.  Finally we use Proposition~\ref{main thm genus 1'} to get a fourth knot $K^{(4)}$ homology concordant to $K$ which admits a derivative $L^{(4)}$ for which $\TL(L^{(4)})$, $\SL(L^{(4)})$, and the Arf-invariant of every component vanish.  According to \cite[Theorem 1]{Martin15} $L^{(4)}$ is $0$-solvable so that $K^{(4)}$ is 1-solvable. Since $K$ is homology concordant to a $1$-solvable knot, $K$ is $1$-solvable. This completes the proof. \end{proof}

 \begin{bibdiv}[References]
\begin{biblist}[\normalsize]
\setlength{\itemsep}{15pt}

\bib{Knotinfo}{article}{
author={Cha, Jae Choon},
author={Livingston, Charles},
title={KnotInfo: Table of Knot Invariants},
note={http://www.indiana.edu/~knotinfo May 25, 2016}
}

\bib{Cochran85}{article}{
   author={Cochran, Tim D.},
     TITLE = {Geometric invariants of link cobordism},
   JOURNAL = {Comment. Math. Helv.},
  FJOURNAL = {Commentarii Mathematici Helvetici},
    VOLUME = {60},
      YEAR = {1985},
    NUMBER = {2},
     PAGES = {291--311},
      ISSN = {0010-2571},
   MRCLASS = {57Q45},
  MRNUMBER = {800009},
MRREVIEWER = {Michel Boileau},
       DOI = {10.1007/BF02567416},
       URL = {https://doi.org/10.1007/BF02567416},
}

\bib{Cochran90}{article}{
   author={Cochran, Tim D.},
     TITLE = {Derivatives of links: {M}ilnor's concordance invariants and
              {M}assey's products},
   JOURNAL = {Mem. Amer. Math. Soc.},
  FJOURNAL = {Memoirs of the American Mathematical Society},
    VOLUME = {84},
      YEAR = {1990},
    NUMBER = {427},
     PAGES = {x+73},
      ISSN = {0065-9266},
   MRCLASS = {57M25 (55S30)},
  MRNUMBER = {1042041},
MRREVIEWER = {C. Kearton},
       DOI = {10.1090/memo/0427},
       URL = {https://doi.org/10.1090/memo/0427},
}

\bib{CD15}{article}{
   author={Cochran, Tim D.},
   author={Davis, Christopher W.},
     TITLE = {Counterexamples to {K}auffman's conjectures on slice knots},
   JOURNAL = {Adv. Math.},
  FJOURNAL = {Advances in Mathematics},
    VOLUME = {274},
      YEAR = {2015},
     PAGES = {263--284},
      ISSN = {0001-8708},
   MRCLASS = {57M25 (57Q60)},
  MRNUMBER = {3318151},
MRREVIEWER = {Quach thi C\^am V\^an},
       DOI = {10.1016/j.aim.2014.12.006},
       URL = {https://doi.org/10.1016/j.aim.2014.12.006},
}

\bib{CD16}{article}{
   author={Cochran, Tim D.},
   author={Davis, Christopher W.},
   title={Cut open null-bordisms and derivatives of slice knots},
   note={Arxiv Preprint}
}

\bib{CFT09}{article}{
   author={Cochran, Tim D.},
   author={Friedl, Stefan},
   author={Teichner, Peter},
     TITLE = {New constructions of slice links},
   JOURNAL = {Comment. Math. Helv.},
  FJOURNAL = {Commentarii Mathematici Helvetici. A Journal of the Swiss
              Mathematical Society},
    VOLUME = {84},
      YEAR = {2009},
    NUMBER = {3},
     PAGES = {617--638},
      ISSN = {0010-2571},
   MRCLASS = {57M25},
  MRNUMBER = {2507256},
MRREVIEWER = {Daniel Silver},
       DOI = {10.4171/CMH/175},
       URL = {https://doi.org/10.4171/CMH/175},
}

\bib{CHL10}{article}{
   author={Cochran, Tim D.},
   author= {Harvey, Shelly},
   author= {Leidy, Constance},
     TITLE = {Primary decomposition and the fractal nature of knot
              concordance},
   JOURNAL = {Math. Ann.},
  FJOURNAL = {Mathematische Annalen},
    VOLUME = {351},
      YEAR = {2011},
    NUMBER = {2},
     PAGES = {443--508},
      ISSN = {0025-5831},
   MRCLASS = {57M25 (20J05 57N70)},
  MRNUMBER = {2836668},
MRREVIEWER = {Swatee Naik},
       DOI = {10.1007/s00208-010-0604-5},
       URL = {https://doi.org/10.1007/s00208-010-0604-5},
}

\bib{COT03}{article}{
    AUTHOR = {Cochran, Tim D.},
    author = {Orr, Kent},
    author = {Teichner, Peter},
     TITLE = {Knot concordance, {W}hitney towers and {$L^2$}-signatures},
   JOURNAL = {Ann. of Math. (2)},
  FJOURNAL = {Annals of Mathematics. Second Series},
    VOLUME = {157},
      YEAR = {2003},
    NUMBER = {2},
     PAGES = {433--519},
      ISSN = {0003-486X},
   MRCLASS = {57M25 (57M27)},
  MRNUMBER = {1973052},
MRREVIEWER = {Martin Scharlemann},
       DOI = {10.4007/annals.2003.157.433},
       URL = {https://doi.org/10.4007/annals.2003.157.433},
}

\bib{FM}{article}{
   author={Fox, Ralph H.},
   author={Milnor, John W.},
     TITLE = {Singularities of {$2$}-spheres in {$4$}-space and cobordism of
              knots},
   JOURNAL = {Osaka J. Math.},
  FJOURNAL = {Osaka Journal of Mathematics},
    VOLUME = {3},
      YEAR = {1966},
     PAGES = {257--267},
      ISSN = {0030-6126},
   MRCLASS = {55.20 (57.00)},
  MRNUMBER = {0211392},
MRREVIEWER = {C. H. Giffen},
       URL = {http://projecteuclid.org/euclid.ojm/1200691730},
}

\bib{DR}{article}{
   author={Davis, Christopher W.},
   author={Ray, Arunima},
   title={Satellite operators as group actions on knot concordance},
   journal={Algebr. Geom. Topol.},
   volume={16},
   date={2016},
   number={2},
   pages={945--969},
   issn={1472-2747},
   review={\MR{3493412}},
   doi={10.2140/agt.2016.16.945},
}

\bib{JKP}{article}{
   author={Jang, Hye Jin},
   author={Kim, Min Hoon},
   author={Powell, Mark},
     TITLE = {Smoothly slice boundary links whose derivative links have
              nonvanishing {M}ilnor invariants},
   JOURNAL = {Michigan Math. J.},
  FJOURNAL = {Michigan Mathematical Journal},
    VOLUME = {63},
      YEAR = {2014},
    NUMBER = {2},
     PAGES = {423--446},
      ISSN = {0026-2285},
   MRCLASS = {57M25},
  MRNUMBER = {3215557},
MRREVIEWER = {Daniel Silver},
       DOI = {10.1307/mmj/1401973058},
       URL = {https://doi.org/10.1307/mmj/1401973058},
}

\bib{Otto14}{article}{
   author={Otto, Carolyn},
   title={The $(n)$-solvable filtration of link concordance and Milnor's
   invariants},
   journal={Algebr. Geom. Topol.},
   volume={14},
   date={2014},
   number={5},
   pages={2627--2654},
   issn={1472-2747},
   review={\MR{3276843}},
   doi={10.2140/agt.2014.14.2627},
}

\bib{Martin15}{article}{
   author={Martin, Taylor},
   title={Classification of Links Up to 0-Solvability},
   Note = {Arxiv Preprint.}
}

\bib{Park15}{article}{,
    AUTHOR = {Park, JungHwan},
     TITLE = {A construction of slice knots via annulus modifications},
   JOURNAL = {Topology Appl.},
  FJOURNAL = {Topology and its Applications},
    VOLUME = {238},
      YEAR = {2018},
     PAGES = {1--19},
      ISSN = {0166-8641},
   MRCLASS = {57M25},
  MRNUMBER = {3775112},
       DOI = {10.1016/j.topol.2018.01.010},
       URL = {https://doi.org/10.1016/j.topol.2018.01.010},
}

\bib{Park16}{article}{
   author={Park, JungHwan},
   title={Milnor's triple linking numbers and derivatives of genus three knots},
   Note = {Arxiv Preprint.}
}

\bib{Sato84}{article}{
   author={Sato, Nobuyuki},
   title={Cobordisms of semiboundary links},
   journal={Topology Appl.},
   volume={18},
   date={1984},
   number={2-3},
   pages={225--234},
   issn={0166-8641},
   review={\MR{769293}},
   doi={10.1016/0166-8641(84)90012-9},
}

\end{biblist}
\end{bibdiv}

\end{document}